\documentclass{article}

\usepackage[centertags]{amsmath}
\usepackage{hyperref}
\usepackage{amsfonts}
\usepackage{amssymb}
\usepackage{amsthm}
\usepackage{newlfont}
\usepackage{amscd}
\usepackage{amsmath,amscd}
\usepackage{graphicx}
\usepackage[all]{xy}
\usepackage{verbatim}

\usepackage{color}

\newcommand{\barr}{\overline}
\newcommand{\adj}{\rightleftarrows}

\newcommand{\NN}{\mathbb{N}}
\newcommand{\cA}{\mathcal{A}}
\newcommand{\cB}{\mathcal{B}}
\newcommand{\cC}{\mathcal{C}}
\newcommand{\cD}{\mathcal{D}}

\newcommand{\cF}{\mathcal{F}}

\newcommand{\cI}{\mathcal{I}}
\newcommand{\cJ}{\mathcal{J}}
\newcommand{\cK}{\mathcal{K}}

\newcommand{\cM}{\mathcal{M}}

\newcommand{\cR}{\mathcal{R}}
\newcommand{\cS}{\mathcal{S}}
\newcommand{\cT}{\mathcal{T}}

\newcommand{\cW}{\mathcal{W}}


\def\Del{{\Delta}}



\newtheorem{thm}{Theorem}[section]

\newtheorem{cor}[thm]{Corollary}

\newtheorem{lem}[thm]{Lemma}
\newtheorem{prop}[thm]{Proposition}

\newtheorem{example}[thm]{Example}
\theoremstyle{definition}
\newtheorem{define}[thm]{Definition}
\theoremstyle{remark}
\newtheorem{rem}[thm]{Remark}


\DeclareMathOperator{\SPS}{SPS}
\DeclareMathOperator{\PS}{PS}
\DeclareMathOperator{\SSh}{SSh}
\DeclareMathOperator{\Sh}{Sh}
\DeclareMathOperator{\id}{id}
\DeclareMathOperator{\op}{op}
\DeclareMathOperator{\Sp}{Sp}
\DeclareMathOperator{\Lw}{Lw}
\DeclareMathOperator{\R}{R}
\DeclareMathOperator{\h}{h}
\DeclareMathOperator{\Ob}{Ob}
\DeclareMathOperator{\Ex}{Ex}

\DeclareMathOperator{\Spec}{Spec}

\DeclareMathOperator{\Set}{Set}

\DeclareMathOperator{\Pro}{Pro}
\DeclareMathOperator{\Ind}{Ind}
\DeclareMathOperator{\Map}{Map}
\DeclareMathOperator{\Ho}{Ho}
\DeclareMathOperator{\Hom}{Hom}
\DeclareMathOperator{\Mor}{Mor}
\DeclareMathOperator{\Ab}{Ab}
\DeclareMathOperator{\prehocolim}{hocolim}
\DeclareMathOperator{\precolim}{colim}

\def\CM{\mathtt{CM}}
\def\Csep{\mathtt{SC^*}}

\def\colim{\mathop{\precolim}}
\def\hocolim{\mathop{\prehocolim}}

\def\lrar{\longrightarrow}

\def\x{\stackrel}

\def \mcal{\mathcal}



\DeclareFontEncoding{OT2}{}{} 
\DeclareTextFontCommand{\textcyr}{\fontencoding{OT2}\fontfamily{wncyr}\fontseries{m}\fontshape{n}\selectfont}


\begin{document}
\title{A Projective Model Structure on Pro-Simplicial Sheaves, and the Relative \'Etale Homotopy Type}

\author{
Ilan Barnea \footnote{The first author was supported by the Alexander von Humboldt Foundation (Humboldt Professorship of Michael Weiss).} \and
Tomer M. Schlank \footnote{The second author was supported by the Simons Fellowship in the Department of Mathematics of the Massachusetts Institute of Technology.}}

\maketitle

\begin{abstract}
In this work we shall introduce a new model structure on the category of pro-simplicial sheaves, which is very convenient for the study of \'etale homotopy. Using this model structure we define a pro-space associated to a topos, as a result of applying a derived functor. We show that our construction lifts Artin and Mazur's \'etale homotopy type \cite{AM} in the relevant special case. Our definition extends naturally to a relative notion, namely, a pro-object associated to a map of topoi. This relative notion lifts the relative \'etale homotopy type that was used in \cite{HaSc} for the study of obstructions to the existence of rational points. This relative notion enables to generalize these homotopical obstructions from fields to general base schemas and general maps of topoi.

Our model structure is constructed using a general theorem that we prove. Namely, we introduce a much weaker structure than a model category, which we call a ``weak fibration category". Our theorem says that a weak fibration category can be ``completed" into a full model category structure on its pro-category, provided it satisfies some additional technical requirements. Our model structure is obtained by applying this result to the weak fibration category of simplicial sheaves over a Grothendieck site, where the weak equivalences and the fibrations are local in the sense of Jardine \cite{Jar}.
\end{abstract}

\tableofcontents

\section{Introduction: Weak  Fibration Categories}

Model categories, introduced by Quillen in \cite{Qui}, provide a very general context in which it is possible to set up the basic machinery of homotopy theory. However, the structure of a model category is not always available. The structure of a model category is determined by the classes of weak equivalences and fibrations (since the class of cofibrations is then determined by a left lifting property). There are situations in which there is a natural definition of weak equivalences and fibrations; however, the resulting structure is not a model category. A notable example is the category of simplicial sheaves over a Grothendieck site, where the weak equivalences and the fibrations are local in the sense of Jardine \cite{Jar}.

In this paper we introduce a much weaker and easy to verify structure than a model category, which we call a ``weak fibration category". Our main theorem (Theorem \ref{t:model_big}) says that a weak fibration category can be
``completed" into a full model category structure on its pro-category, provided it satisfies a property which we call ``homotopically small", and the pro-category satisfies a certain two out of three property.
We also define the notion of a weak right Quillen functor between weak fibration categories and show that it induces a right Quillen functor between the model structures on the pro-categories.

The notion of a weak fibration category is closely related to K. S. Brown's notion of a ``category of fibrant objects" (\cite{Bro}) and Baues's notion of a ``fibration category" (\cite{Bau}). These notions were introduced as a more flexible structure than a model category in which to do abstract homotopy theory.

We now give the exact definition:

\begin{define}
Let $\mcal{C}$ be a category with finite limits, and let $\mcal{M}\subseteq\mcal{C}$ be a subcategory. We say
that $\mcal{M}$ is \emph{closed under base change} if whenever we have a pullback square
\[
\xymatrix{A\ar[d]^g\ar[r] & B\ar[d]^f\\
C\ar[r] & D}
\]
such that $f$ is in $\mcal{M}$, then $g$ is in $\mcal{M}$.
\end{define}

\begin{define}\label{d:weak_fib}
A \emph{weak fibration category} is a category $\mcal{C}$ with an additional
structure of two subcategories
$$\mcal{F}, \mcal{W} \subseteq \mcal{C}$$
that contain all the isomorphisms, such that the following conditions are satisfied:
\begin{enumerate}
\item $\mcal{C}$ has all finite limits.
\item $\mcal{W}$ has the two out of three property.
\item The subcategories $\mcal{F}$ and $\mcal{F}\cap \mcal{W}$ are closed under base change.
\item Every map $A\to B $ in $\mcal{C}$ can be factored as $A\xrightarrow{f} C\xrightarrow{g} B $,
where $f$ is in $\mcal{W}$ and $g$ is in $\mcal{F}$. We denote this property by $\Mor(\mcal{C}) = \mcal{F}\circ \mcal{W}$.
\end{enumerate}
The maps in $\mcal{F}$ are called \emph{fibrations}, and the maps in $\mcal{W}$ are called \emph{weak equivalences}.
\end{define}

\begin{rem}
Note that we \emph{do not} require the factorizations in Definition \ref{d:weak_fib} (3) to be functorial.
\end{rem}

\begin{example}\label{e:model_is_fib}
Let $(\mcal{M},\mcal{W},\mcal{C},\mcal{F})$ be a model category. Then $(\mcal{M},\mcal{W},\mcal{F})$ is a weak fibration category.
\end{example}

\begin{example}\label{r:COFO}
Let $(\mcal{C},\mcal{W},\mcal{F})$ be a category of fibrant objects, in the sense of \cite{Bro}. If $\cC$ has finite limits, then it follows from the results of \cite[Section I.1]{Bro} that $(\mcal{C},\mcal{W},\mcal{F})$ is a weak fibration category. Note that the existence of finite limits is not satisfied in many interesting examples (consider for example the category of Kan complexes).

Conversely, let $(\mcal{C},\mcal{W},\mcal{F})$ be a weak fibration category. An object $A\in \cC$ is called \emph{fibrant} if the unique map $A\to *$ is a fibration. Let $\cC_f$ denote the full subcategory of $\cC$ spanned by the fibrant objects. Then it is not hard to check that $(\cC_f,\cW\cap\cC_f,\cF\cap\cC_f)$ is a category of fibrant objects.
\end{example}

\begin{example}\label{e:profinite}
Let $\Gamma$ be a profinite group, and let $\cC$ be the category of simplical sets with a continuous (at each degree) $\Gamma$ action. Consider $\cC$ as a weak fibration category, where the weak equivalences and the fibrations are induced from those in simplicial sets. If $\Gamma$ is finite, then $\cC$ is a model category. However, if $\Gamma$ is infinite, it is not hard to check that if $\cC$ was a model category every cofibrant object would have a free action of $\Gamma$. But this is impossible since all the stabilizers of this object must be of finite index (since the action of $\Gamma$ is continuous).
\end{example}

\begin{example}\label{e:ssh}
More generally, take $\SSh(\cC)$ to be the category of simplicial sheaves on a Grothendieck site $\cC$, where both weak equivalences and fibrations are local as in \cite{Jar} (see  Section \ref{s:SPS}). This is the main example of a weak fibration category we will consider in this paper.
\end{example}

In order to describe our main result more explicitly, we need some preliminaries from the theory of pro-categories. This is explained in more detail in Section \ref{s:prelim}. Let $\cC$ be a category. Then there is a natural fully faithful functor $\cC\to \Pro(\cC)$. By abuse of notation we will consider objects and morphisms of $\cC$ as objects and morphisms of $\Pro(\cC)$ using this functor. If $M$ is any class of morphisms in $\cC$, there is a naturally corresponding class of morphisms in $\Pro(\cC)$ called $\Lw^{\cong}(M)$. These are maps in $\Pro(\cC)$ that are isomorphic to a natural transformation which  is levelwise in $M$.

A weak fibration category $(\cC,\cW,\cF)$ is called pro-admissible if $\Lw^{\cong}(\mcal{W})\subseteq  \Pro(\mcal{C})$ satisfies the two out of three property.
The notion of a homotopically small weak fibration category is a bit more involved and will be introduced in Definition \ref{d:sub_weak}.
Intuitively, this can be thought of as a weak fibration category whose homotopical data is controlled by its small full sub weak fibration categories.
We can now state our main result, which is shown in Theorem \ref{t:model_big} and Proposition \ref{p:ho_ff};

\begin{thm}\label{t:model_intro}
Let $(\mcal{C},\mcal{W},\mcal{F})$ be a homotopically small pro-admissible weak fibration category.
Then there exists a model structure on $\Pro(\mcal{C})$ such that the weak equivalences are $\Lw^{\cong}(\mcal{W})$, the cofibrations are ${}^{\perp} (\mcal{F}\cap \mcal{W})$ and the acyclic cofibrations are
${}^{\perp} \mcal{F}.$ (Recall that ${}^{\perp}M$ denotes the class of morphisms in $\Pro(\mcal{C})$ having the left lifting property with respect to all the morphisms in $M$.)

Furthermore, the natural functor $\cC\to\Pro(\cC)$ sends weak equivalences to weak equivalences, and the induced functor $\Ho(\cC)\to\Ho(\Pro(\cC))$ is fully faithful.
\end{thm}

\begin{rem}\
\begin{enumerate}
\item The notion of a model structure referred to in Theorem \ref{t:model_intro} just means that all the usual axioms for a model category are satisfied, except, maybe, completeness cocompleteness and functoriality of the factorizations.
\item The proof of \cite[Proposition 11.1]{IsaS}, shows that $\Pro(\cC)$ is complete (since $\cC$ has finite limits). This proof also shows that (for any cardinal $\kappa$) if $\cC$ has ($\kappa$-)small colimits, so does $\Pro(\cC)$.
\item In Theorem \ref{t:model_big} we will give a more explicit description of the fibrations in this model structure, but this requires some more definitions.
\end{enumerate}
\end{rem}

The idea behind Theorem \ref{t:model_intro} is that the main reason why $(\mcal{C},\mcal{W},\mcal{F})$ is not necessarily a model category is the absence of factorizations of maps $A\to B$ in $\mcal{C}$ into a cofibration followed by an acyclic fibration $A\to C\to B$. If $\mcal{C}$ was a model category, such a factorization would be a (homotopy) initial object in the category of all factorizations of $A\to B$ into a general map followed by an acyclic fibration. If $\cC$ is only a weak fibration category, such an initial object does not necessarily exist. In this case, we take $C$ to be the entire inverse system of all such factorizations, thus resulting in a pro-object. However, the category of factorizations is not necessarily small and not necessarily cofiltered. An important part of the proof is to replace it with a related category that is small and cofiltered (see Proposition \ref{p:factor_gen_0}).

Given a \emph{model category} $\cC$, model structures on $\Pro(\cC)$ were studied by Edwards and Hastings \cite{EH}, Isaksen \cite{Isa} and other authors. Here we obtain a model structure on $\Pro(\cC)$ while assuming a weaker structure on $\cC$ itself. In the case where $\cC$ is a model category, our model structure is identical to the one described in \cite{EH},  \cite{Isa}.

In Section \ref{s:SPS} we will show that the weak fibration category of simplicial sheaves considered in Example \ref{e:ssh} is homotopically small and pro-admissible. Applying Theorem \ref{t:model_intro} to this weak fibration category, we get a novel model structure on the category $\Pro(\SSh(\cC))$ of pro-simplicial sheaves. Since every local fibration (and in particular every levelwise fibration) is a fibration in this model structure, it can be considered a projective model structure on $\Pro(\SSh(\cC))$. We elaborate more on this model structure in Section \ref{s:new_big}.

\begin{thm}
Let $\cC$ be a small Grothendieck site, and let $\SSh(\cC)$ be the category of simplicial sheaves on $\cC$.
Then there exists a model structure on $\Pro(\SSh(\cC))$ such that the weak equivalences are $\Lw^{\cong}(\mcal{W})$, the cofibrations are ${}^{\perp} (\mcal{F}\cap \mcal{W})$ and the acyclic cofibrations are
${}^{\perp} \mcal{F}$, where $\cW$ and $\cF$ are the classes of local weak equivalences and local fibrations in $\SSh(\cC)$, respectively.

Furthermore, the natural functor $\SSh(\cC)\to\Pro(\SSh(\cC))$ sends local weak equivalences to weak equivalences, and the induced functor $$\Ho(\SSh(\cC))\to\Ho(\Pro(\SSh(\cC)))$$
is fully faithful.
\end{thm}

\begin{rem}
Since $\SSh(\cC)$ is complete and cocomplete it follows that the same is true for $\Pro(\SSh(\cC))$.
\end{rem}

In \cite{Jar2}, Jardine considers a different model structure on pro-simplicial sheaves, with the same class of weak equivalences. This model structure can be thought of as ``injective" (since every levelwise cofibration is a cofibration in this model structure). In Section \ref{s:compare} we will show that the identity functors constitute a Quillen equivalence between these two model structures.

Using our new model structure on $\Pro(\SSh(\cC))$, we obtain naturally a derived functor definition of the \'etale homotopy type defined by Artin and Mazur in \cite{AM}. We use Lemma \ref{l:l_adjoint} (2) and Proposition \ref{p:RQFunc_big} to show the following:

\begin{thm}\label{t:etale}
Let $X$ be a locally Noetherian scheme, and let $X_{\acute{e}t}$ be its \'etale topos.
Let $\pi_0:X_{\acute{e}t}\to \Set$ be the functor induced by the functor which sends a scheme to its set of connected scheme-theoretic components. Then prolongation by $\pi_0$,
$$\Pro(\pi_0):\Pro(X_{\acute{e}t}^{\Del^{\op}}) \to \Pro(\Set^{\Del^{\op}}),$$
is a left Quillen functor, relative to our projective model structures.
\end{thm}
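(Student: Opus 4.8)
The plan is to realise $\Pro(\pi_0)$ as the left adjoint of a right Quillen functor, since working with cofibrations directly is awkward (our model structure being only fibrantly generated). The first step is to identify the right adjoint of $\pi_0\colon X_{\acute{e}t}\to Set$. Because $X$ is locally Noetherian, every scheme étale over $X$ is locally Noetherian, hence locally connected as a topological space, and therefore decomposes as the coproduct (in schemes) of its connected components. Consequently, for $U$ étale over $X$ the covering of $U$ by its components exhibits $\Gamma(U,\Delta(S))$, the locally constant $S$-valued functions on $U$, as exactly $\mathrm{Hom}_{Set}(\pi_0(U),S)$; since $\pi_0$ is by construction colimit-preserving, this shows that its right adjoint is the constant-sheaf functor $\Delta\colon Set\to X_{\acute{e}t}$, i.e. the inverse-image part $p^{*}$ of the canonical geometric morphism $p\colon X_{\acute{e}t}\to Set$. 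Applying this adjunction degreewise gives $\pi_0\dashv\Delta$ between $X_{\acute{e}t}^{\Del^{op}}$ and $Set^{\Del^{op}}$, and, since prolongation of functors carries adjunctions to adjunctions (Section \ref{s:prelim}), $\Pro(\pi_0)\dashv\Pro(\Delta)$. It therefore suffices to prove that $\Pro(\Delta)\colon\Pro(Set^{\Del^{op}})\to\Pro(X_{\acute{e}t}^{\Del^{op}})$ is a right Quillen functor for the projective model structures.

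The second step is to check the hypotheses of Proposition \ref{p:RQFunc}, i.e. that $\Delta=p^{*}$ is a right Quillen functor of weak fibration categories from $Set^{\Del^{op}}$ (simplicial sheaves on the one-point site, with the local/Kan structure) to $X_{\acute{e}t}^{\Del^{op}}$. This is the content of Proposition \ref{p:geo_wrq} for the geometric morphism $p$: being an inverse-image functor, $\Delta$ preserves finite limits; and since the étale topos has enough points and the stalk of $\Delta(K)$ at any geometric point is $K$ itself, $\Delta$ carries stalkwise Kan fibrations to stalkwise Kan fibrations and stalkwise weak equivalences to stalkwise weak equivalences, hence preserves local fibrations and local trivial fibrations. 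By Proposition \ref{p:RQFunc}, prolongation then turns $\Delta$ into a right Quillen functor between the model categories produced by Theorem \ref{t:model}, which are precisely our projective model structures on $\Pro(Set^{\Del^{op}})$ and $\Pro(X_{\acute{e}t}^{\Del^{op}})$. Combining with the first step, $\Pro(\pi_0)$ is left Quillen, as claimed.

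I expect the only genuinely non-formal point to be the identification of $\Delta$ as the right adjoint of $\pi_0$ in the first step, and this is exactly where the hypothesis that $X$ is locally Noetherian is used: it is what guarantees that étale $X$-schemes split as coproducts of their connected components, and hence that the components functor is corepresented on sheaves by the constant-sheaf functor; without such a local-connectedness assumption this fails. Everything else is either quoted (Propositions \ref{p:geo_wrq} and \ref{p:RQFunc}, together with the verification, needed for Theorem \ref{t:model} to apply on both sides, that simplicial sheaves over a site and over a point satisfy the relevant $2$-out-of-$3$ property) or formal (prolongation preserves adjunctions; the left adjoint in a Quillen adjunction is left Quillen). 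The usual set-theoretic smallness issue for $SSh$ is dealt with as in the remark following Theorem \ref{t:model}.
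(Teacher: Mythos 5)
Your proposal is correct and follows essentially the same route as the paper: the locally Noetherian hypothesis makes $X_{\acute{e}t}$ locally connected, so $\pi_0=\Gamma_!$ is left adjoint to the constant-sheaf functor $\Gamma^*$, which is a weak right Quillen functor by Proposition \ref{p:geo_wrq}; Proposition \ref{p:RQFunc} then yields the Quillen adjunction and identifies the left Quillen functor $L_{\Gamma^*}$ with $\Pro(\pi_0)$. The only (harmless) variation is that you verify preservation of local (acyclic) fibrations by $\Gamma^*$ stalkwise, whereas the paper argues via preservation of local epimorphisms and Lemmas \ref{l:lf}, \ref{l:laf}, which avoids invoking that the topos has enough points.
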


For more details see Section \ref{s:etale_big}.

\begin{rem}
In the existing injective model structure of Jardine mentioned above, $\Pro(\pi_0):\Pro(X_{\acute{e}t}^{\Del^{\op}}) \to \Pro(\Set^{\Del^{\op}})$ is \emph{not} a left Quillen functor.
\end{rem}

Theorem \ref{t:etale} enables to make the following definition:
\begin{define}\label{d:etale0}
We define the \emph{\'etale topological realization of $X$} to be
$$|X_{\acute{e}t}| := \mathbb{L}\Pro(\pi_0)(*_{X_{\acute{e}t}})\in \Ho(\Pro(\Set^{\Del^{\op}}))=\Ho(\Pro(\cS)),$$
where $*_{X_{\acute{e}t}}$ is a terminal object of $X_{\acute{e}t}^{\Del^{\op}}$.
\end{define}

The above definition of the \'etale topological realization
is closely related to Artin and Mazur's \'etale homotopy type:
\begin{thm}[see Proposition \ref{p:AM}]\label{t:etale2}
Under the natural functor
$$\Ho : \Pro(\cS) \to \Pro(\Ho(\cS)),$$
$\Ho(|X_{\acute{e}t}|)$ is isomorphic to Artin and Mazur's \'etale homotopy type.
\end{thm}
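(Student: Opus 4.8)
The plan is to compare the derived functor $\mathbb{L}\Pro(\pi_0)$ with the Artin--Mazur construction by working level-wise on a suitable cofibrant replacement. First I would recall how the Artin--Mazur \'etale homotopy type is built: one takes the cofiltered category $\mathrm{HRR}(X_{\acute{e}t})$ of hypercovers of $X_{\acute{e}t}$ (up to simplicial homotopy), applies the connected-components functor $\pi_0$ level-wise to each hypercover, and obtains a pro-object in $\Ho(\cS)$. So the target lives naturally in $\Pro(\Ho(\cS))$ already, which is why the theorem is stated after composing with $\Ho : \Pro(\cS) \to \Pro(\Ho(\cS))$.

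The key step is to identify a cofibrant replacement of the terminal object $*_{X_{\acute{e}t}}$ in our projective model structure on $\Pro(X_{\acute{e}t}^{\Del^{op}})$ whose image under $\Pro(\pi_0)$ recovers, after passing to $\Ho$, the pro-system indexed by hypercovers. Here I would invoke the explicit construction of factorizations from Proposition~\ref{p:factor_gen}: a cofibrant replacement of $*$ is obtained as the pro-object given by the (cofiltered, after the rectification of Proposition~\ref{p:factor_gen}) system of all factorizations $\emptyset \to C_i \to *$ into a map followed by a local acyclic fibration, i.e.\ by local acyclic fibrations (local trivial fibrations) over the point. The content to check is that this indexing category is cofiltered-equivalent to the Artin--Mazur hypercover category: every local trivial fibration $C \to *$ is, up to the relevant equivalence, a hypercover of $X_{\acute{e}t}$ (by Jardine's local acyclic fibration = hypercover comparison, cf.\ \cite{Jar}), and conversely. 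Then $\Pro(\pi_0)$ applied to this system is exactly $\{\pi_0(C_i)\}$, and after applying $\Ho$ we land on the same pro-object in $\Ho(\cS)$ that Artin--Mazur define. One must also check that $\pi_0$ of a local trivial fibration over a connected point, computed in sheaves, agrees with the $\pi_0$ used in \cite{AM}; this is where the hypothesis that $X$ is locally Noetherian enters, ensuring $\pi_0$ of the \'etale topos has the expected meaning.

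A secondary but necessary step is a cofinality/pro-isomorphism argument: the system of \emph{all} factorizations is larger than the Artin--Mazur hypercover system, so I would exhibit a cofinal functor (or a pair of functors realizing a pro-isomorphism after $\Ho$) between the two cofiltered categories, using that every factorization refines to one coming from a hypercover and that simplicially homotopic hypercovers become equal in $\Ho(\cS)$. Finally, well-definedness up to pro-isomorphism in $\Pro(\Ho(\cS))$ follows because any two cofibrant replacements of $*$ are weakly equivalent in $\Pro(X_{\acute{e}t}^{\Del^{op}})$, hence map to isomorphic pro-objects after $\Ho\circ\Pro(\pi_0)$, using that $\Pro(\pi_0)$ is left Quillen (Theorem~\ref{t:etale}) and therefore sends weak equivalences between cofibrant objects to weak equivalences.

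I expect the main obstacle to be the cofinality comparison between the ``all factorizations into local trivial fibrations'' category and the Artin--Mazur hypercover category: the rectified indexing category produced by Proposition~\ref{p:factor_gen} is built for formal model-categorical reasons and is not literally the hypercover category, so one has to carefully produce cofinal functors in both directions (or at least one cofinal functor) compatible with $\pi_0$, and verify that the simplicial-homotopy identifications on the Artin--Mazur side match the identifications induced by passing to $\Ho(\cS)$ level-wise. Controlling this comparison, rather than any single level-wise computation, is the crux of the argument.
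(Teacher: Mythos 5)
Your proposal follows essentially the same route as the paper's proof (Lemma~\ref{l:AM}): take the cofibrant replacement of $*_{X_{\acute{e}t}}$ given by Proposition~\ref{p:factor_gen} as the system of all factorizations through local acyclic fibrations, observe that $\pi_0$ descends to simplicial homotopy classes so the composite factors through the homotopy category of locally fibrant locally contractible objects, and conclude by cofinality — which the paper supplies via Proposition~\ref{p:compare_AM} together with Jardine's result that hypercovers are cofinal among such objects. You correctly identify the cofinality comparison as the crux, and this is exactly what Section~\ref{s:cofib} is set up to provide.
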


In Definition \ref{d:etale0} we have used the fact that the left Quillen functor
$$\Pro(\pi_0):\Pro(X_{\acute{e}t}^{\Del^{\op}}) \to \Pro(\cS)$$
induces a derived functor
$$\mathbb{L}\Pro(\pi_0):\Ho(\Pro(X_{\acute{e}t}^{\Del^{\op}})) \to \Ho(\Pro(\cS)).$$
Since the natural functor
$\Pro(\cS)\to \Pro(\Ho(\cS))$
sends weak equivalences to isomorphisms, we obtain an induced functor
$$\Ho(\Pro(\cS))\to \Pro(\Ho(\cS)).$$
However, the category $\Ho(\Pro(\cS))$ is very different from $\Pro(\Ho(\cS))$ and holds much more information. An object in $\Ho(\Pro(\cS))$ cannot be recovered, even up to isomorphism, from its image in $\Pro(\Ho(\cS))$. In fact the natural functor above does not even reflect isomorphisms. Thus, the \'etale topological realization $|X_{\acute{e}t}|$, which is an object in $\Ho(\Pro(\cS)),$ is more refined then Artin and Mazur's \'etale homotopy type, which is an object in $\Pro(\Ho(\cS)).$
This is very important if one wants to use the tools of abstract homotopy theory to study the scheme. In \cite{Fri}, Friedlander also lifts the \'etale homotopy type of Artin and Mazur to an actual pro-space. He does so by replacing the classical notion of hypercovering (used by Artin and Mazur to define the \'etale homotopy type) by the more involved notion of rigid hypercovering. We achieve the same goal, but without appealing to rigid hypercoverings. Moreover, our definition is more conceptual and extends naturally to a general site (see Definition \ref{d:etale_big}). Our definition is closely related to the notion of the \emph{shape} of an infinity topos, considered by Lurie (\cite[Chapter 7]{Lur}) and To\"{e}n-Vezzosi (\cite{ToVe}). (The exact relation is explained in the following  paper by the first author Y. Harpaz and G. Horel \cite{BHH}.)

The definition of topological realization extends naturally to a relative notion. Namely, given a morphism of sites $f: \cC\to \cD$, we give a derived functor definition of the topological realization of $f$, which is an object $|\cC|_\cD\in\Pro(\SSh(\cD))$ (see Definition \ref{d:relative_big}). The non-relative notion is obtained by considering the site morphism  $ \cC\to *$.
We actually get a functor
$$|\bullet|_\cD: \cS ites/\cD \to \Ho\Pro(\SSh(\cD)).$$
It is easy to verify that for every site $\cD$ we have $|\cD|_\cD \simeq *$, so by the functoriality of $|\bullet|_\cD$ we have a map
$$h:\cC(\cD)\to [*_\cD,|\cC|_\cD]_{\Pro(\SSh(\cD))},$$
where $\cC(\cD)$ is the set of site morphisms $s:\cD \to \cC$ which are sections of the map $f:\cC\to \cD$.
The codomain of $h$ above has an obstruction theory and a Bousfield-Kan type spectral sequence, so the map $h$ can be used to study sections of maps of sites. For example, if the codomain of $h$ can be shown to be empty, then we know that $\cC(\cD)$ is empty, or in other words that $f$ has no section.

A case of special interest is when $f$ is the morphism of \'etale sites induced by a scheme morphism $X\to spec(K)$. Then, a section of $f$ is just a $K$-rational point of $X$. In this case the relative topological realization lifts the notion of the relative \'etale homotopy type  $\acute{E}t_{/K}(X)$ considered in \cite{HaSc} by Harpaz and the second author, in the context of studying rational points (in a similar way that the topological realization of the \'etale site of a variety $X$ lifts the \'etale homotopy type $\acute{E}t(X)$).

The work presented in this paper originated from the motivation of finding a suitable model structure in which the general machinery of abstract homotopy theory can be used to define and study obstructions to the existence of rational points. Such obstructions were studied without the framework of a model structure by Y. Harpaz and the second author in  \cite{HaSc} and by Ambrus P\'al in \cite{Pal}.
However, in the absence of a suitable model structure, homotopical notions and constructions were given in an ad-hoc fashion.
Furthermore, having a ``topological" object and a model structure allows one to use the general machinery of model categories in order to give simpler and more conceptual proofs to the results in \cite{HaSc}. This also enables to generalize the homotopical obstruction theory of \cite{HaSc}, from fields to arbitrary base schemas.
We elaborate more on this in sections \ref{s:relative} and \ref{s:rational}.

Although the case of simplicial sheaves was our original motivation, after writing this paper additional applications of our main theorem (Theorem \ref{t:model_intro}) were found. These applications also include new methods for verifying the property of pro-admissibility, and will be discussed in future papers. For example, in \cite{BaSc1} the authors use Theorem \ref{t:model_intro} to study the accessibility rank of weak equivalences in combinatorial model categories, and in a joint work with M. Joachim and S. Mahanta, the first author uses this theorem to construct a model structure on the pro-category of the category of separable-$C^*$-algebras \cite{BJM}. Given this, and the fact that all the proofs remain exactly the same, we decided to write the paper in this more general context.
\subsection{Organization of the paper}

We begin in Section \ref{s:prelim} with a brief account of the necessary background on pro-categories. In Section \ref{s:factor} we prove a factorization lemma (Proposition \ref{p:factor_gen_big}) which will be the main tool in proving the existence of our model structure. This section is the technical heart of the paper. Section \ref{s:model} contains our main result (Theorem \ref{t:model_big}), concerning the existence of a model structure on $\Pro(\cC)$ when $\cC$ is a homotopically small pro-admissible weak fibration category. In Section \ref{s:Quillen} we define the notion of a \emph{weak right Quillen functor} between weak fibration categories, and discuss when it induces a right Quillen functor between the corresponding model categories on the pro-categories. In Section \ref{s:cofib} we discuss the homotopy category of a weak fibration category. We show that the natural inclusion $\cC \to \Pro(\cC)$ induces a fully faithful functor $\Ho(\cC) \to \Ho(\Pro(\cC))$, when $\cC$ is homotopically small and pro-admissible.
In Section \ref{s:SPS} we consider our main examples, namely, the categories of simplicial sheaves and simplicial presheaves on a Grothendieck site. We show that with the notions of local weak equivalences and local fibrations, they both become homotopically small pro-admissible weak fibration categories. Using our main theorem we deduce the existence of induced model structures on their pro-categories. In Section \ref{s:etale_big} we apply the results of the previous two sections to give a derived functor definition of the \'etale homotopy type of \cite{AM}. We also generalize this to the topological realization of a general site and a morphism of sites, as explained above.

\subsection{Acknowledgments}
We would like to thank Yonatan Harpaz, Ambrus P\'{a}l, Bertrand To\"en  and Mark Behrens for some useful discussions.
We would like to thank our PhD advisors Emmanuel D. Farjoun, David Kazhdan, and Ehud De-Shalit, for their help and useful suggestions. The second author would also like to thank Pierre Deligne for a stimulating and useful conversation concerning the topics of this paper and their applications. Finally, we would like to thank the referee for his useful remarks.
\section{Preliminaries on Pro-Categories}\label{s:prelim}

In this section we bring a short review of the necessary background on pro-categories. Some of the definitions and lemmas given here are slightly non-standard.
Standard references on pro-categories include \cite{AM} and \cite{SGA4-I}. For the homotopical parts the reader is referred to \cite{BaSc}, \cite{EH} and \cite{Isa}.
Many of the ideas in this section (and paper) are influenced by Isaksen's work on pro-categories.

\begin{define}\label{d:cofiltered}
A category $\cI$ is called \emph{cofiltered} if the following conditions are satisfied:
\begin{enumerate}
\item The category $\cI$ is non-empty.
\item For every pair of objects $s,t \in \cI$, there exists an object $u\in \cI$, together with
morphisms $u\to s$ and $u\to t$.
\item For every pair of morphisms $f,g:s\to t$ in $\cI$ there exists a morphism $h:u\to s$ in $\cI$ such that $f\circ h=g\circ h$.
\end{enumerate}
A category satisfying only the first two properties listed above is called \emph{semi-cofiltered}.
\end{define}

If $\cT$ is a poset, then we view $\cT$ as a category which has a single morphism $u\to v$ iff $u\geq v$. Note that this convention is the opposite of that used by some authors. Thus, a poset $\cT$ is cofiltered iff $\cT$ is non-empty, and for every $a,b$ in $\cT$ there exists an element $c$ in $\cT$ such that $c\geq a,b$. A cofiltered poset will also be called \emph{directed}. Additionally, in the following, instead of saying ``a directed poset" we will just say ``a directed set".

\begin{define}\label{def CDS}
A cofinite poset is a poset $\cT$ such that for every element $x$ in $\cT$ the set $\cT_x:=\{z\in \cT| z \leq x\}$ is finite.
\end{define}

\begin{define}\label{def_deg}
Let $\cA$ be a cofinite poset. We define the degree function of $\cA$, $d=d_\cA:\cA\to\NN$, by
$$d(a):=max\{n\in \NN|\exists a_0<\cdots<a_n=a\}.$$
For every $n\geq -1$ we define $\cA^n:=\{a\in \cA|d(a)\leq n\}$ $(\cA^{-1}=\phi)$.
\end{define}
Thus $d:\cA\to\NN$ is a strictly increasing function. The degree function enables us to define or prove things concerning $\cA$ inductively, since clearly $\cA=\cup_{n\geq 0}\cA^n$.

\begin{define}\label{d:section}
Let $\cT$ be a poset, and let $\cA$ be a subset of $\cT$. We shall say that $\cA$ is a (lower) section of $\cT$, if for every $x$ in $\cA$ and $y$ in $\cT$ such that $y <x$, we have that $y$ is also in $\cA$.
\end{define}

\begin{define}\label{d:triangle}
Let $\cC$ be a category.
\begin{enumerate}
\item The category $\cC^{\lhd}$ has as objects $\Ob(\cC)\coprod{\infty}$, and the morphisms are the morphisms in $\cC$, together with a unique morphism $\infty\to c$, for every object $c$ in $\cC$.
\item The category $\cC^{\rhd}$ has as objects $\Ob(\cC)\coprod (-\infty)$, and the morphisms are the morphisms in $\cC$, together with a unique morphism $c\to (-\infty)$, for every object $c$ in $\cC$.
\end{enumerate}
\end{define}

In particular, if $\cC=\phi$ then $\cC^{\lhd}=\{\infty\}$.
Note that if $\cA$ is a cofinite poset and $a$ is an element in $\cA$ of degree $n$, then $\cA_a$ is naturally isomorphic to $(\cA_a^{n-1})^{\lhd}$ (where $\cA_a^{n-1}$ is just $(\cA_a)^{n-1}$, see Definition \ref{def CDS}).

The following lemma is clear, but we include it for later
reference:
\begin{lem}\label{l:eqiv_cofiltered}\
\begin{enumerate}
\item A cofinite poset $\cA$ is cofiltered iff for every finite section $R$ of $\cA$ (see Definition \ref{d:section}), there exists an element $c$ in $\cA$ such that $c\geq r$ for every $r$ in $R$.
\item A category $\cC$ is cofiltered iff for every finite poset $R$ and every functor $F:R\to \cC$, there exists an object $c$ in $\cC$, together with compatible morphisms $c\to F(r)$ for every $r$ in $R$ (that is, a morphism $Diag(c)\to F$ in $\cC^R$, or equivalently, we can extend the functor $F:R\to \cC$ to a functor $R^{\lhd}\to \cC$).
\end{enumerate}
\end{lem}

A category is called \emph{small} if it has a small set of objects and a small set of morphisms

\begin{define}\label{def_pro}
Let $\mcal{C}$ be a category. The category $\Pro(\mcal{C})$ has as objects all diagrams in $\cC$ of the form $\cI\to \cC$ such that $\cI$ is small and cofiltered (see Definition \ref{d:cofiltered}). The morphisms are defined by the formula
$$\Hom_{\Pro(\mcal{C})}(X,Y):=\lim_s \colim_t \Hom_{\mcal{C}}(X_t,Y_s).$$
Composition of morphisms is defined in the obvious way.
\end{define}

Thus, if $X:\cI\to \mcal{C}$ and $Y:\cJ\to \mcal{C}$ are objects in $\Pro(\mcal{C})$, providing a morphism $X\to Y$ means specifying for every $s$ in $\cJ$ an object $t$ in $\cI$ and a morphism $X_t\to Y_s$ in $\mcal{C}$. These morphisms should of course satisfy some compatibility condition. In particular, if $p:\cJ\to \cI$ is a functor, and $\phi:p^*X:=X\circ p\to Y$ is a natural transformation, then the pair $(p,\phi)$ determines a morphism $\nu_{p,\phi}:X\to Y$ in $\Pro(\cC)$ (for every $s$ in $\cJ$ we take the morphism $\phi_s:X_{p(s)}\to Y_s$). In particular, taking $Y=p^*X$ and $\phi$ to be the identity natural transformation, we see that $p$ determines a morphism $\nu_{p,X}:X\to p^*X$ in $\Pro(\cC)$.

Let $f:X\to Y$ be a morphism in $\Pro(\cC)$. A morphism in $\cC$ of the form $X_r\to Y_s$ that represents the $s$ coordinate of $f$ in $\colim_{t\in \cI} \Hom_{\mcal{C}}(X_t,Y_s)$ is called ``representing $f$".

The word pro-object refers to objects of pro-categories. A \emph{simple} pro-object
is one indexed by the category with one object and one (identity) map. Note that for any category $\mcal{C}$, $\Pro(\mcal{C})$ contains $\mcal{C}$ as the full subcategory spanned by the simple objects.

\begin{define}\label{d:cofinal}
Let $p:\cJ\to \cI$ be a functor between categories. The functor $p$ is said to be (left) cofinal if for every object $i$ in $\cI$ the following conditions are satisfied:
\begin{enumerate}
\item The over category ${p}_{/i}$ is nonempty.
\item The over category ${p}_{/i}$ is connected.
\end{enumerate}
A functor satisfying only the first property above is called \emph{semi-cofinal}.
\end{define}

Cofinal functors play an important role in the theory of pro-categories mainly because of the following well-known lemma:

\begin{lem}\label{l:cofinal}
Let $p:\cJ\to \cI$ be a cofinal functor between small cofiltered categories, and let $X:\cI\to \cC$ be an object in $\Pro(\cC)$. Then $\nu_{p,X}:X\to p^*X$ is an isomorphism in $\Pro(\cC)$.
\end{lem}

We denote by $[1]$ the category with object set $\{0,1\}$ and one non-identity morphism $0\to 1$. Thus, if $\cC$ is any category, the functor category $\cC^{[1]}$ is just the category of morphisms in $\cC$.

\begin{lem}[{\cite[Corollary 3.26]{BaSc}}]\label{every map natural}
One can construct a natural inverse equivalences of categories
$$\Pro(\cC^{[1]}) \rightleftarrows \Pro(\cC)^{[1]},$$
where the left adjoint is $\phi\mapsto\nu_{\id,\phi}$.

In particular, for every morphism in $\Pro(\cC)$ one can choose an isomorphic morphism (in the category of morphisms in $\Pro(\cC)$) that comes from a natural transformation and this choice can be done functorially.
\end{lem}

\begin{define}\label{def natural}
Let $\mcal{C}$ be a category with finite limits, $M$ a class of morphisms in $\mcal{C}$, $\cI$ a small category and $F:X\to Y$ a morphism in $\mcal{C}^\cI$. Then:
\begin{enumerate}
\item The map  $F$ is called a \emph{levelwise} $M$-\emph{map}, if for every $i$ in $\cI$ the morphism $X_i\to Y_i$ is in $M$. We will denote this by $F\in \Lw(M)$.
\item The map  $F$ is called a \emph{special} $M$-\emph{map}, if the following hold:
    \begin{enumerate}
    \item The indexing category $\cI$ is a cofinite poset (see Definition \ref{def CDS}).
    \item The natural map $X_t \to Y_t \times_{\lim_{s<t} Y_s} \lim_{s<t} X_s $ is in $M$, for every $t$ in $ \cI$.
    \end{enumerate}
    We will denote this by $F\in \Sp(M)$.
\item The diagram $X$ is called a \emph{special} $\mcal{M}$-\emph{diagram}, if the natural transformation $X\to *$ is a special $\mcal{M}$-map.
\end{enumerate}
\end{define}

\begin{define}\label{def mor}
Let $\mcal{C}$ be a category and $M,N$ classes of morphisms in $\mcal{C}$.
\begin{enumerate}
\item If $f$ and $g$ are morphisms in $\cC$, we denote by $f \perp g$ the fact that $f$ has the left lifting property with respect to $g$. We denote by $M \perp N$ the fact that $f\perp g$ for every $f$ in $M$ and $g$ in $N$.
\item We denote by $\R(M)$ the class of morphisms in $\mcal{C}$ that are retracts of morphisms in $M$. Note that $\R(\R(M))=\R(M)$.
\item We denote by $M^{\perp}$ (resp. ${}^{\perp}M$) the class of morphisms in $\mcal{C}$ having the right (resp. left) lifting property with respect to all the morphisms in $M$.
\item We denote by $\Lw^{\cong}(M)$ the class of morphisms in $\Pro(\mcal{C})$ that are \textbf{isomorphic} to a morphism that comes from a natural transformation which is a levelwise $M$-map.
\item If $\cC$ has finite limits, we denote by $\Sp^{\cong}(M)$ the class of morphisms in $\Pro(\mcal{C})$ that are \textbf{isomorphic} to a morphism that comes from a natural transformation which is a special $M$-map.
\end{enumerate}
\end{define}

\begin{lem}\label{l_lift}
Assume $\Mor(\cC)=M\circ N$. Then
$$N^{\perp}\subseteq \R(M),\:\:^{\perp}M\subseteq \R(N).$$
\end{lem}

\begin{lem}[{\cite[Proposition 2.2]{Isa}}]\label{l:ret_lw}
Let $M$ be any class of morphisms in $\mcal{C}$. Then $$\R(\Lw^{\cong}(M)) = \Lw^{\cong}(M).$$
\end{lem}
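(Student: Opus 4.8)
The claim is that $R(Lw^{\cong}(M)) = Lw^{\cong}(M)$, i.e. the class $Lw^{\cong}(M)$ of morphisms in $\Pro(\mcal{C})$ isomorphic (in $Ar(\Pro(\mcal{C}))$) to a levelwise $M$-map is closed under retracts. Since every class of the form $R(N)$ contains $N$, only the inclusion $R(Lw^{\cong}(M)) \subseteq Lw^{\cong}(M)$ needs proof. The plan is to take a morphism $F$ in $\Pro(\mcal{C})$ which is a retract of some $G \in Lw^{\cong}(M)$, and produce an explicit cofiltered diagram exhibiting $F$ as isomorphic to a levelwise $M$-map. The key idea is that a retract diagram, being a finite diagram of pro-objects, can be "levelled": realized by an actual diagram of cofiltered systems over a common, suitably chosen indexing category, after which one takes levelwise splittings and uses the fact that $M$-maps in $\mcal{C}$ — no, wait, $M$ need not be closed under retracts in $\mcal{C}$ — so instead one must be more careful.

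Here is the careful approach. A retract of $F\colon X\to Y$ by $G\colon Z\to W$ means we have a commutative diagram in $Ar(\Pro(\mcal{C}))$
\[
\xymatrix{
X \ar[r] \ar[d]_F & Z \ar[r] \ar[d]_G & X \ar[d]^F \\
Y \ar[r] & W \ar[r] & Y
}
\]
whose horizontal composites are identities. By Lemma \ref{every map natural} (or the weaker statement that any finite diagram in $\Pro(\mcal{C})$ of morphisms with a common enough shape can be rectified), and using that $G$ may be taken to come from an honest levelwise $M$-map $Z\to W$ indexed by a cofiltered (even cofinite directed) category $I$, the first step is to re-index everything over a single cofiltered category. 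Concretely: the whole retract square is a functor from a finite category into $\Pro(\mcal{C})$; standard pro-category reindexing (the same tool behind Lemma \ref{every map natural}, cf.\ \cite{AM} Appendix 3 and \cite{EH} §2) lets us assume $X,Y,Z,W$ are all indexed by one common cofiltered poset $I$, that $G\colon Z\to W$, $F\colon X \to Y$ and all four horizontal maps are genuine natural transformations over $I$, that the two horizontal composites are (levelwise) the identity, and that $Z\to W$ is levelwise in $M$. The main obstacle is exactly this reindexing step: one must check that a retract relation, not just a single morphism, can be presented levelwise — i.e. that the auxiliary maps $X\to Z\to X$, $Y\to W\to Y$ and the identity conditions survive passage to a common index. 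This is where the bulk of the work sits, and it is handled by the cofinality arguments standard in the references, applied to the finite shape category of a retract diagram.

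Once everything is levelled over $I$, the final step is elementary: for each $i\in I$ we have in $\mcal{C}$ a retract diagram
\[
\xymatrix{
X_i \ar[r] \ar[d]_{F_i} & Z_i \ar[r] \ar[d]_{G_i} & X_i \ar[d]^{F_i} \\
Y_i \ar[r] & W_i \ar[r] & Y_i
}
\]
with horizontal composites the identity and $G_i \in M$; hence $F_i \in R(M)$. This only shows $F$ is isomorphic to a levelwise $R(M)$-map, i.e.\ $R(Lw^{\cong}(M)) \subseteq Lw^{\cong}(R(M))$. To close the loop to $Lw^{\cong}(M)$ one must know $R(M) = M$ in $\mcal{C}$ — but the lemma does not assume this, so instead the correct reading is that the lemma is the statement $R(Lw^{\cong}(M)) = Lw^{\cong}(R(M))$ combined with $Lw^{\cong}(R(M)) = Lw^{\cong}(M)$; the latter would again need $R(M)=M$. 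Since the statement as written asserts equality with $Lw^{\cong}(M)$ for arbitrary $M$, the resolution must be that the levelling argument actually yields a levelwise $M$-map directly, not merely a levelwise $R(M)$-map — that is, the retract of $F$ in $\Pro(\mcal{C})$ can be chosen so that the realizing diagram already has the property that the comparison maps are levelwise \emph{isomorphisms} onto retracts that land inside $M$. I therefore expect the actual proof to combine (i) reindexing the retract square over a cofiltered poset, and (ii) a further cofinal refinement of $I$ so that the retract structure stabilizes levelwise in a way that places each $F_i$ literally in $M$ (not just $R(M)$); step (ii) is the delicate point and is the analogue, in this setting, of the observation that retracts of pro-objects can be "absorbed" at the level of index categories. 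I would organize the write-up as: (1) reduce to $R(Lw^{\cong}(M)) \subseteq Lw^{\cong}(M)$; (2) level the retract diagram over a common cofiltered poset using the reindexing machinery; (3) perform the cofinal refinement to realize $F$ as a genuine levelwise $M$-map; and flag (3) as the crux.
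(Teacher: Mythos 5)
The paper does not actually prove this lemma; it cites Isaksen (\cite{IsaS}, Proposition 12.1). So the comparison must be against that argument, and your proposal has a genuine gap precisely at the point you yourself flag as ``the crux.'' Steps (1) and (2) are fine in outline, and your observation that naive levelling only lands each $F_i$ in $R(M)$ rather than $M$ is exactly the right diagnosis. But your proposed fix --- ``a further cofinal refinement of $I$ so that the retract structure stabilizes levelwise in a way that places each $F_i$ literally in $M$'' --- is not an argument and, as stated, cannot work: reindexing along a cofinal functor never changes the objects and maps appearing at the levels, so if the levels of your representation of $F$ are genuinely retracts of $M$-maps and $M$ is not closed under retracts, no refinement of the index category will push them into $M$. (A secondary issue with step (2): when you level the whole retract square you can in general only arrange the horizontal composites to be \emph{structure maps} of the pro-objects, not literal identities, which already signals that the levels of $X$ and $Y$ are the wrong objects to keep.)

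The missing idea is that one must \emph{replace} the levels of $X$ and $Y$ by levels of $Z$ and $W$, so that the levelwise maps of the new representation are literally (composites with structure maps of) the maps $g_s\in M$, not retracts of them. The mechanism is the idempotent/telescope trick: from the retract data $a\colon X\to Z$, $b\colon Z\to X$, $c\colon Y\to W$, $d\colon W\to Y$ with $ba=\mathrm{id}$, $dc=\mathrm{id}$, $ga=cf$, $fb=dg$, one checks $g\circ(ab)=(cd)\circ g$, and one shows that $X$ is isomorphic in $\Pro(\mcal{C})$ to the pro-object obtained by interleaving copies of the levels of $Z$ along the idempotent $ab$ (and likewise $Y$ from $W$ along $cd$); under these isomorphisms $f$ corresponds to the map given levelwise by $g$, which is levelwise in $M$ with no closure of $M$ under retracts required. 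In the general pro-object case this requires interleaving the index categories, which is where the real work of \cite{IsaS}, Proposition 12.1 lies. Without this construction your outline does not close, so as written the proof is incomplete.
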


\begin{lem}\label{c:ret_lift}
Let $M$ be any class of morphisms in $\mcal{C}$. Then
$$(\R(M))^{\perp} = M^{\perp},\:\: {}^{\perp}(\R(M)) = {}^{\perp}M,$$
$$\R(M^{\perp}) = M^{\perp},\:\: \R({}^{\perp}M) = {}^{\perp}M.$$
\end{lem}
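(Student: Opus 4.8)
The plan is to establish all four identities of Lemma \ref{c:ret_lift} from the single underlying fact that lifting properties are closed under retracts in the arrow category, together with the elementary duality between left and right lifting properties. I would first prove the two ``lifting against a retract'' statements, i.e. $(R(M))^{\perp} = M^{\perp}$ and ${}^{\perp}(R(M)) = {}^{\perp}M$, and then deduce the two ``retract of a lifting class'' statements, i.e. $R(M^{\perp}) = M^{\perp}$ and $R({}^{\perp}M) = {}^{\perp}M$. By passing to $\mcal{C}^{op}$, which interchanges $({-})^{\perp}$ with ${}^{\perp}({-})$ and preserves the operation $R$, it suffices to prove one statement of each pair, say $(R(M))^{\perp} = M^{\perp}$ and $R(M^{\perp}) = M^{\perp}$.

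For $(R(M))^{\perp} = M^{\perp}$: the inclusion $(R(M))^{\perp} \subseteq M^{\perp}$ is immediate since $M \subseteq R(M)$ and $({-})^{\perp}$ is inclusion-reversing. For the reverse inclusion, suppose $p \in M^{\perp}$ and let $i \in R(M)$, so there is a morphism $j \in M$ in $Ar(\mcal{C})$ and arrows $i \to j \to i$ in $Ar(\mcal{C})$ composing to the identity. Given a lifting problem (a commutative square) from $i$ to $p$, I would precompose the square with $j \to i$ to get a lifting problem from $j$ to $p$, solve it using $p \in M^{\perp}$, and then postcompose the resulting diagonal filler with the retraction $i \to j$ to obtain a solution to the original lifting problem against $i$. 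This is the standard diagram chase; I would write it out with an explicit commutative diagram, being careful that the retraction data on $i$ consists of compatible commutative squares so that all the composites match up. This shows $M^{\perp} \subseteq (R(M))^{\perp}$.

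For $R(M^{\perp}) = M^{\perp}$: the inclusion $M^{\perp} \subseteq R(M^{\perp})$ is trivial since any class contains its own members as (trivial) retracts. For $R(M^{\perp}) \subseteq M^{\perp}$, I would use the general and purely formal fact that for any class $N$, the class $N^{\perp}$ is closed under retracts: if $q$ is a retract of $p \in N^{\perp}$ and we are given a lifting problem from some $n \in N$ to $q$, we compose with $q \to p$ to transport it to a lifting problem against $p$, solve it, and compose the filler back with $p \to q$. Applying this with $N = M$ gives that $M^{\perp}$ is closed under retracts, i.e. $R(M^{\perp}) = M^{\perp}$. (Alternatively one can cite that this closure property is exactly what underlies the retract argument in the proof of $(R(M))^{\perp} = M^{\perp}$, so the two statements are really the same lemma applied differently.)

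I do not anticipate a genuine obstacle here — this is a routine lemma whose content is entirely in bookkeeping commutative squares in $Ar(\mcal{C})$. The only point requiring a little care is making the retract diagrams in the arrow category fully explicit: a retract of a morphism $j\colon A \to B$ by $i\colon A' \to B'$ means a commutative ladder
\[
\xymatrix{
A' \ar[r] \ar[d]_i & A \ar[r] \ar[d]_j & A' \ar[d]^i \\
B' \ar[r] & B \ar[r] & B'
}
\]
with horizontal composites the identities, and one must track these maps through the lifting arguments so that the pasted squares genuinely commute. Once that is set up, each of the four inclusions is a one-line chase, and the $op$-duality halves the work. I would present the proof as: (i) recall the explicit meaning of retracts in $Ar(\mcal{C})$; (ii) prove $M^{\perp}$ and ${}^{\perp}M$ are closed under retracts, giving the bottom row; (iii) prove $(R(M))^{\perp} = M^{\perp}$ by the chase above, and invoke $op$-duality for ${}^{\perp}(R(M)) = {}^{\perp}M$; (iv) conclude.
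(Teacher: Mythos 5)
Your proof is correct and is exactly the standard diagram chase the paper has in mind; the paper's own proof of this lemma consists solely of the words ``Easy diagram chase.'' Your organization (closure of lifting classes under retracts, plus $op$-duality to halve the work) is the canonical way to fill that in, so there is nothing to compare or correct.
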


\begin{lem}[{\cite[Lemma 2.14]{BaSc}}]\label{l:SpMo_is_Mo}
Let $M$ be any class of morphisms in $\mcal{C}$. Then $${}^{\perp}\Sp^{\cong}(M) = {}^{\perp}M.$$
\end{lem}

\begin{prop}\label{p:section}
Let $\mcal{C}$ be a category with finite limits, and $\mcal{M} \subseteq \mcal{C}$ a subcategory that is closed under base change and contains all the isomorphisms. Let $\cT$ be a cofinite poset, $X:\cT \to \mcal{C}$ a special $\mcal{M}$-diagram, and $\cA  \subseteq \cB \subseteq \cT$ any two finite sections of $\cT$ (see Definition \ref{d:section}).
Then the map
$$\lim\limits_{s\in \cB} X_t \to  \lim\limits_{s\in \cA} X_t $$
is in $\mcal{M}$.
\end{prop}
\begin{proof}
We prove the lemma by induction on the size of $\cB$. The base of the induction ($\cB=\phi$) is clear.
Now assume that the lemma holds for $|\cB| <n$ ($n\geq 1$).
Let us prove the lemma for $|\cB|=n$. If $\cA=\cB$ the statement is clear. Otherwise, choose a maximal element
$x\in \cB \backslash \cA$. We can decompose the map
$$\lim\limits_{s\in \cB} X_s \to  \lim\limits_{s\in \cA} X_s $$
into
$$\lim\limits_{s\in \cB} X_s \to \lim\limits_{s\in \cB\backslash \{x\}} X_s \to  \lim\limits_{s\in \cA} X_s .$$
The first map belongs to $\mcal{M}$ by considering the pullback square
$$\xymatrix{
\lim\limits_{s\in \cB} X_s \ar[r]\ar[d] & \lim\limits_{s\in \cB \backslash \{x\}} X_s\ar[d] \\
X_t \ar[r]^{\mcal{M}} & \lim\limits_{s < t} X_s,
}$$
and the second map belongs to $\mcal{M}$ by the induction hypothesis. Since $\mcal{M}$ is closed under composition, we have the desired result.
\end{proof}

\begin{cor}\label{c:M_object}
Let $\mcal{C}$ be a category with finite limits, and $\mcal{M} \subseteq \mcal{C}$ a subcategory that is closed under base change, and contains all the isomorphisms. Let $\cT$ be a cofinite poset, and $X:\cT \to \mcal{C}$ a special $\mcal{M}$-diagram.
Then for every $t\in \cT$, $X_t$ is an $\mcal{M}$-object (that is, the morphism $X\to *$ is in $\mcal{M}$).
\end{cor}
\begin{proof}
Apply Proposition \ref{p:section} with $\cB=\cT_{\leq t}, \cA = \emptyset.$
\end{proof}

\begin{prop}\label{forF_sp_is_lw}
Let $\mcal{C}$ be a category with finite limits, and $\mcal{M} \subseteq \mcal{C}$ a subcategory that is closed under base change, and contains all the isomorphisms. Let $F:X\to Y$ be a natural transformation between diagrams in $\mcal{C}$, which is a special $\mcal{M}$-map. Then $F$ is a levelwise $\mcal{M}$-map.
\end{prop}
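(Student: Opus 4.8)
The plan is to reduce the claim to a statement about limits over finite posets, and then prove that by induction. For $t$ in the indexing poset $I$, set $M_tX := \lim_{s<t}X_s$ and $M_tY := \lim_{s<t}Y_s$; these exist in $\mcal{C}$ since $\{s\in I : s<t\}\subseteq R_t$ is finite (Definition~\ref{def CDS}) and $\mcal{C}$ has finite limits. By the definition of a special $\mcal{M}$-map, $X_t\to Y_t\times_{M_tY}M_tX$ lies in $\mcal{M}$, and the projection $Y_t\times_{M_tY}M_tX\to Y_t$ is the base change of $M_tX\to M_tY$ along $Y_t\to M_tY$; since $\mcal{M}$ is closed under pullbacks and (being a subcategory) under composition, it is enough to show that $M_tX\to M_tY$ lies in $\mcal{M}$ for every $t$. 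Now $M_tX\to M_tY$ is exactly $\lim_J$ applied to the restriction of $F$ to the finite Reysha $J := \{s\in I : s<t\}$ (Definition~\ref{d:Reysha}), and this restriction is again a special $\mcal{M}$-map: for $s\in J$, down-closedness of $J$ forces $\{s'\in J : s'<s\} = \{s'\in I : s'<s\}$, so the matching objects — hence the matching maps — are unchanged. So the whole statement follows from the following claim, applied to $J=\{s\in I: s<t\}$ for each $t$.

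\emph{Claim.} If $J$ is a finite poset and $G\colon U\to V$ is a special $\mcal{M}$-map over $J$, then $\lim_J U\to\lim_J V$ lies in $\mcal{M}$.

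I would prove the Claim by induction on $|J|$. If $J=\emptyset$ the map is $\mathrm{id}_{*}$, which is in $\mcal{M}$ as an isomorphism. For the inductive step, choose a maximal element $j\in J$ and put $J' := J\setminus\{j\}$, a Reysha with $|J'|=|J|-1$; as in the first paragraph $G|_{J'}$ is a special $\mcal{M}$-map, so $\lim_{J'}U\to\lim_{J'}V$ lies in $\mcal{M}$ by induction. Adjoining the maximal element $j$ gives the canonical pullback decompositions $\lim_J U\cong \lim_{J'}U\times_{L} U_j$ and $\lim_J V\cong \lim_{J'}V\times_{L'} V_j$, where $L := \lim_{s<j}U_s$, $L' := \lim_{s<j}V_s$, and the maps $\lim_{J'}U\to L$, $\lim_{J'}V\to L'$ are the canonical projections. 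Base-changing the special map $U_j\to V_j\times_{L'}L$ (which is in $\mcal{M}$) along $\lim_{J'}U\to L$ produces an $\mcal{M}$-map
$$\lim_J U\;\cong\;\lim_{J'}U\times_{L}U_j\;\longrightarrow\;\lim_{J'}U\times_{L}\bigl(V_j\times_{L'}L\bigr)\;=:\;Q .$$
A pasting-of-pullbacks computation identifies $Q$ with $\lim_{J'}U\times_{\lim_{J'}V}\lim_J V$, the base change of $\lim_{J'}U\to\lim_{J'}V$ along $\lim_J V\to\lim_{J'}V$; hence $Q\to\lim_J V$ lies in $\mcal{M}$. Chasing universal properties, the composite $\lim_J U\to Q\to\lim_J V$ is exactly $\lim_J G$, so $\lim_J G\in\mcal{M}$, completing the induction.

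The only genuine work is the bookkeeping inside the inductive step: verifying the pullback decomposition of $\lim_J$ obtained by removing a maximal element, carrying out the pasting of pullback squares that identifies $Q$, and checking that the resulting composite really is $\lim_J G$. This is a routine if slightly fiddly diagram chase; everything else (the reduction at the start, closure of $\mcal{M}$ under pullbacks and composition, the stability of special maps under restriction to Reyshas, and the empty base case) is immediate. It may be worth remarking that this argument is just the poset incarnation of the standard fact that a Reedy fibration whose matching maps lie in a pullback-stable class is levelwise in that class.
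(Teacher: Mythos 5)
Your proof is correct. The induction you run in the Claim — peel off a maximal element of a finite poset, decompose the limit as a pullback, and paste — is the same combinatorial engine as the paper's Lemma~\ref{maximal}/Lemma~\ref{l:Reysha}, but the two arguments are organized differently. The paper passes to the arrow category: it defines $\mcal{M}^{ar}$ to be the class of squares whose gap map lies in $\mcal{M}$, proves once and for all that $\mcal{M}^{ar}$ is a pullback-stable subcategory of $Ar(\mcal{C})$, and then observes that a special $\mcal{M}$-map $F$ is precisely a special $\mcal{M}^{ar}$-diagram $T\to Ar(\mcal{C})$; the levelwise conclusion then falls out immediately, since applying the Reysha lemma with $B=T_{\leq t}$, $A=\emptyset$ says exactly that each $F_t$ is an $\mcal{M}^{ar}$-object, i.e.\ a morphism in $\mcal{M}$. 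You instead keep everything in $\mcal{C}$ and work directly on the map of diagrams, which forces you to inline the pasting-of-pullbacks bookkeeping inside the inductive step and to add the preliminary reduction through $X_t\to Y_t\times_{M_tY}M_tX\to Y_t$. That reduction is fine but avoidable: your Claim applied to $J=T_{\leq t}$ already gives the result, since $t$ is initial in that Reysha (with the paper's convention $u\to v$ iff $u\geq v$), so $\lim_{T_{\leq t}}X=X_t$ and $\lim_{T_{\leq t}}Y=Y_t$. The paper's packaging buys a slightly more general statement (the map $\lim_B\to\lim_A$ for arbitrary nested Reyshas $A\subseteq B$), which it reuses later, e.g.\ in checking that factorizations indexed by cofinite posets land in $\cF_f$; your version is more self-contained but proves only the $A=\emptyset$ instance.
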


Proposition \ref{forF_sp_is_lw} appears in \cite{FaIs} (see Lemmas 2.3 and 5.14), but without a full proof. We thus explain briefly how it follows easily from Corollary \ref{c:M_object} by a simple trick.

\begin{proof}
Let $Ar(\mcal{C})$ denote the category of arrows in $\mcal{C}$.
Define $\mcal{M}^{ar}\subseteq \Mor(Ar(\mcal{C}))$ to be the class of morphisms represented by squares
$$\xymatrix{
A \ar[r]\ar[d]& B\ar[d] \\
C\ar[r] & D,
}$$
such that the natural map $A\to B\times_D C$ is in $\mcal{M}$.
It is a standard verification that $\mcal{M}^{ar}\subseteq Ar(\mcal{C})$ is a subcategory that is closed under base change and contains all the isomorphisms.
It is also easy to see that an object $A\to B \in Ar(\mcal{C})$ is an $\mcal{M}^{ar}$-object iff it is a morphism in $\mcal{M}$.

Let $\cT$ be a cofinite poset, and let $F:X\to Y$ be a morphism in $\mcal{C}^\cT$, which is a special $\mcal{M}$-map. Let $\cI$ denote the category with two objects $0,1$, and a unique morphism $0\to 1$. Then $F$ can be regarded as a functor $F:\cI\to \mcal{C}^\cT$, or equivalently, as a functor $F:\cT\to \mcal{C}^\cI=Ar(\cC)$. It is straightforward to check that $F$ (in the first picture) is a special $\mcal{M}$-map iff $F$ (in the second picture) is a special $\mcal{M}^{ar}$-diagram. It follows from Corollary \ref{c:M_object} that for every $t\in \cT$, $F_t\in Ar(\cC)$ is an $\mcal{M}^{ar}$-object. It thus follows that for every $t\in \cT$, $F_t:X_t\to Y_t$ is in $\mcal{M}$.
\end{proof}

\begin{cor}\label{l:forF_sp_is_lw}
Let $\mcal{C}$ be a category with finite limits, and $\mcal{M} \subseteq \mcal{C}$ a subcategory that is closed under base change, and contains all the isomorphisms. Then $\Sp^{\cong}(\mcal{M})\subseteq \Lw^{\cong}(\mcal{M})$.
\end{cor}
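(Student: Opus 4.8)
The plan is to deduce this immediately from Proposition~\ref{forF_sp_is_lw}, after unwinding the definitions of $Sp^{\cong}$ and $Lw^{\cong}$ from Definition~\ref{def mor}. Let $f$ be a morphism in $\Pro(\mcal{C})$ lying in $Sp^{\cong}(\mcal{M})$. By definition, $f$ is isomorphic, in the arrow category $Ar(\Pro(\mcal{C}))$, to a morphism that comes from a natural transformation $F:X\to Y$ between diagrams in $\mcal{C}$ which is a special $\mcal{M}$-map. Note that a special $\mcal{M}$-map is by definition (Definition~\ref{def natural}(2)) indexed by a cofinite poset, so in particular $X$ and $Y$ share a common indexing category and $F$ is a genuine natural transformation, which is the form required for the definition of $Lw^{\cong}$.

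Next I would invoke Proposition~\ref{forF_sp_is_lw}: since $\mcal{M}\subseteq\mcal{C}$ is a subcategory that is closed under pullbacks and contains all isomorphisms — exactly the standing hypotheses of the present corollary — any natural transformation which is a special $\mcal{M}$-map is in fact a levelwise $\mcal{M}$-map. Applying this to $F$, we conclude that $F\in Lw(\mcal{M})$.

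Finally, since $f$ is isomorphic in $Ar(\Pro(\mcal{C}))$ to the morphism induced by $F$, and $F$ is a levelwise $\mcal{M}$-map, it follows directly from the definition of $Lw^{\cong}(\mcal{M})$ (Definition~\ref{def mor}(4)) that $f\in Lw^{\cong}(\mcal{M})$. As $f$ was an arbitrary element of $Sp^{\cong}(\mcal{M})$, this gives $Sp^{\cong}(\mcal{M})\subseteq Lw^{\cong}(\mcal{M})$. There is essentially no obstacle here: the only point to be careful about is that the notion of "special $\mcal{M}$-map" already bakes in the cofinite-poset indexing hypothesis, so that Proposition~\ref{forF_sp_is_lw} applies verbatim and the output is of the shape demanded by the definition of $Lw^{\cong}$; all the real work was done in the proof of Proposition~\ref{forF_sp_is_lw}.
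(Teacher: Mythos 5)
Your argument is correct and is exactly the paper's intended route: the corollary is stated immediately after Proposition~\ref{forF_sp_is_lw} precisely because it follows by unwinding Definition~\ref{def mor}(4)--(5) and applying that proposition to the representing special $\mcal{M}$-map. Nothing is missing.
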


\section{Factorization of Maps}\label{s:factor}
In this section we prove a proposition about factorization of maps (Proposition \ref{p:factor_gen_0}) which will be our main tool in proving later the existence of the desired model structure.

We begin with some definitions and lemmas that we will need for proving this proposition and in later sections.

\begin{define}\label{d:pre_cofinal_big}
Let $F:\cJ\to \cI$ be a functor, and let $\barr{\cI}$ be  a category, that contains $\cI$ as a subcategory.
Then $F$ is called \emph{pre-cofinal} relative to $\barr{\cI}$, if for every morphism in $\cI$ of the form $f:i\to F(j)$, there exists a morphism $g:j'\to j$ in $\cJ$ such that $F(g)$ factors through $f$ in $\barr{\cI}$:
$$\xymatrix{
F(j') \ar[dr]_{\barr{\cI}}\ar[rr]^{F(g)}  & \empty & F(j)  \\
\empty &  i \ar[ur]_f & \empty .}$$
$F$ is called simply pre-cofinal, if it is pre-cofinal relative to $\cI$.
\end{define}

\begin{lem}\label{l:compose_onto_big}
The composition of pre-cofinal functors is pre-cofinal.

More generally, let $G:\cK\to \cJ$ be a pre-cofinal functor relative to $\barr{\cJ}$, and let $F:\barr{\cJ}\to\barr{\cI}$ be a functor such that $F|_\cJ:\cJ\to \cI$ is pre-cofinal relative to $\barr{\cI}$.
Then $F\circ G:\cK\to \cI$ is pre-cofinal relative to $\barr{\cI}$.
\end{lem}

\begin{proof}
Let $f:i\to F(G(k))$ be a morphism in $\cI$. The restriction $F|_\cJ$ is pre-cofinal relative to $\barr{\cI}$, so there exists a morphism $g:j'\to G(k)$ in $\cJ$ such that $f\circ t=F(g)$ ($t\in \barr{\cI}$). The functor $G$ is pre-cofinal  relative to $\barr{\cJ}$, so there exists a morphism $h:k'\to k$ in $\cK$ such that $g\circ l=G(h)$ ($l\in \barr{\cJ}$). It follows that
$$f\circ t\circ F(l)=F(g)\circ F(l)=F(g\circ l)=F(G(h)).$$
\end{proof}

The following lemma connects the notions of pre-cofinal and cofinal functors:

\begin{lem}\label{l:onto is cofiltered}
Let $F:\cJ\to \cI$ be a pre-cofinal functor relative to $\barr{\cI}$.
\begin{enumerate}
\item
Suppose that $\Ob(\cI)=\Ob(\barr{\cI})$, $\cI$ is semi-cofiltered (see Definition \ref{d:cofiltered}) and $\cJ\neq\phi$.
Then $F:\cJ\to\barr{\cI}$ is semi-cofinal (see Definition \ref{d:cofinal}).
\item
If in addition $\cI=\barr{\cI}$, $\cI$ is cofiltered and $\cJ$ is semi-cofiltered (see Definition \ref{d:cofiltered}), then $F:\cJ\to\barr{\cI}$ is cofinal.
\end{enumerate}
\end{lem}

\begin{proof}
We need to show that for every $i\in \cI$, the over-category $F_{/i}$ is nonempty. Let $i\in \cI$. The category
$\cJ$ is nonempty, so we can choose $j\in \cJ$. We have that $F(j),i\in \cI$ and $\cI$ is semi-cofiltered, so there exist morphisms in $\cI$ of the form $f:i'\to i, g:i'\to F(j)$. The functor $F$ is pre-cofinal relative to $\barr{\cI}$, so there exists $h:j'\to j$ in $\cJ$ such that $F(h)$ factors through $g$ in $\barr{\cI}$:
$$\xymatrix{
F(j') \ar[dr]^k_{\barr{\cI}}\ar[rr]^{F(h)}  & \empty & F(j).  \\
\empty &  i' \ar[ur]_g & \empty }$$
In particular, $f\circ k:F(j')\to i$ is an object in $F_{/i}$.

Now suppose that $\cI=\barr{\cI}$, $\cI$ is cofiltered and $\cJ$ is semi-cofiltered. We need to show that $F_{/i}$ is also connected. Let $f_1:F(j_1)\to i,f_2:F(j_2)\to i$ be two objects in $F_{/i}$. Since $\cJ$ is semi-cofiltered, there exist morphisms in $\cJ$ of the form $g_1:j_3\to j_1,g_2:j_3\to j_2$. Then $f_1 F(g_1),f_2 F(g_2):F(j_3)\to i$ are two parallel morphisms in $\cI$. Since $\cI$ is cofiltered, there exists a morphism $h:i'\to F(j_3)$ in $\cI$ such that $f_1 F(g_1) h=f_2 F(g_2) h$. Since $F$ is pre-cofinal, there exists a morphism $k:j_4\to j_3$ in $\cJ$ such that $F(k)=h l$. It follows that
$$f_1 F(g_1 k)=f_1 F(g_1) F(k)=f_1 F(g_1) h l=f_2 F(g_2) h l=f_2 F(g_2) F(k)=f_2 F(g_2 k).$$
Thus, we have morphisms in $F_{/i}$ as follows:
$$\xymatrix{
F(j_1) \ar[dr]_{f_1}  & F(j_4) \ar[l]_{F(g_1 k)} \ar[r]^{F(g_2 k)} \ar[d] & F(j_2)  \ar[dl]^{f_2}.\\
\empty &  i & \empty }$$
\end{proof}

\begin{define}
A factorization category is a triple $(\cC,N,\cM)$ such that:
\begin{enumerate}
\item $\mcal{C}$ is a category that has finite limits.
\item $\mcal{M} \subseteq \mcal{C}$ is a subcategory that is closed under base change
\item $N \subseteq \Mor(\mcal{C})$ is an arbitrary class of morphisms such that $\mcal{M}\circ N =\Mor(\mcal{C})$.
\end{enumerate}
\end{define}

\begin{define}\label{d:factorization_cat}
Let $(\cC,N,\cM)$ be a factorization category and let
$f:X\to Y$ be a morphism in $\cC^\cT$, where $\cT$ is a small cofiltered category.

We define the category $\barr{\cF_f}$, whose objects are all pairs $(t, X_t \xrightarrow{g} H \xrightarrow{h} Y_t)$ such that $t\in \cT$, $h\circ g = f_t$, $g\in N$ and $h\in \mcal{M}$. A morphism
$$(t, X_t \to H \to Y_t)\to (t', X_{t'} \to H' \to Y_{t'})$$
in $\barr{\cF_f}$ is given by a morphism $t\to t'$ in $\cT$ together with a commutative diagram of the form
$$\xymatrix{
X_t \ar[d] \ar[r]^N & H \ar[d] \ar[r]^{\mcal{M}} & Y_t \ar[d]\\
X_{t'} \ar[r]^N &  H' \ar[r]^{\mcal{M}} & Y_{t'}    ,}$$
such that the left and right vertical maps are induced by the given morphism $t\to t'$.

We also consider the subcategory $\cF_f\subseteq\barr{\cF_f}$ containing all the objects, but containing only morphisms as above such that the induced map $H\to H'\times_{Y_{t'}}Y_t$ is in $\cM$.

There are natural functors $\barr{p}:\barr{\cF_f}\to \cT$ and $p:{\cF_f}\to \cT$. We define the functors $\barr{H_f}:\barr{\cF_f}\to \cC$ and $H_f:{\cF_f}\to \cC$ to be those sending $(t,X_t\to H \to Y_t)$ to $H$. There are obvious factorizations
$$\barr{p}^*X\xrightarrow{} \barr{H_f}\xrightarrow{} \barr{p}^*Y,$$
$$p^*X\xrightarrow{} H_f\xrightarrow{} p^*Y$$
of $p^*f:p^*X\to p^*Y$ and $\barr{p}^*f:\barr{p}^*X\to \barr{p}^*Y$, in the functor categories $\mcal{C}^{{\cF_f}}$ and $\mcal{C}^{\barr{\cF_f}}$ respectively.
\end{define}

\begin{rem}
The categories $\cF_f,\barr{\cF_f}$ are semi-cofiltered (see Definition \ref{d:cofiltered}), that is, for every pair of objects there is an object that dominates both. However, not every pair of parallel morphisms can be equalized, so they are not necessarily cofiltered.
\end{rem}

These categories are universal among categories that produce certain types of factorizations after pullbacks, as expressed by the following lemma:
\begin{lem}\label{l:factor}
Let $(\cC,N,\cM)$ be a factorization category and let
$f:X\to Y$ be a morphism in $\cC^\cT$, where $\cT$ is a small cofiltered category.
\begin{enumerate}
\item Let $\cA$ be any category, and let $r:\cA\to \cT$ be a functor. Then a factorization $r^*X\xrightarrow{g} H \xrightarrow{h} r^*Y$ of  $r^*f:r^*X\to r^*Y$, in the category $\cC^{\cA}$, such that $h$ is a levelwise $\mcal{M}$ map and $g$ is a levelwise $N$ map, gives rise in a natural way to a functor $q:\cA \to \barr{\cF_f}$ such that $r=\barr{p}q$.

\item Let $\cA$ is a cofinite poset, and let $r:\cA\to \cT$ be a functor. Then a factorization $r^*X\xrightarrow{g} H \xrightarrow{h} r^*Y$ of  $r^*f:r^*X\to r^*Y$, in the category $\cC^{\cA}$, such that $h$ is a special $\mcal{M}$ map and $g$ is a levelwise $N$ map, gives rise in a natural way to a functor $q:\cA \to \cF_f$ such that $r=pq$.
\end{enumerate}
\end{lem}

\begin{proof}
(1) is trivial so let us show (2). By (1) we have an induced functor $q:\cA \to \barr{\cF_f}$ such that $r={p}q$. It remains to show that $q$ is actually a functor to $\cF_f$.

Let $a<b$ be elements of $\cA$. Then the morphism $q(b)\to q(a)$ in $\barr{\cF_f}$ is given by the commutative diagram
$$\xymatrix{
X_{r(b)} \ar[d] \ar[r]^{g_b} & H(b) \ar[d] \ar[r]^{h_b} & Y_{r(b)} \ar[d]\\
X_{r(a)} \ar[r]^{g_a} &  H(a) \ar[r]^{h_a} & Y_{r(a)}    .}$$
We need to show that the induced map $H(b)\to H(a)\times_{ Y_{r(a)}} Y_{r(b)}$ is in $\cM$.
Since the map $h:H \to r^*Y$ in $\cC^{\cA}$ is a special $\cM$ map, it follows from Proposition \ref{p:section}, when applied to the sections $R_{b}$ and $R_{a}$ of $\cA$, that the map
$$\lim\limits_{R_{b}} H\ \to \lim\limits_{R_{a}} H\times_{\lim\limits_{R_{a}} Y\circ r}\lim\limits_{R_{b}} Y\circ r$$
is in $\cM$, or in other words, that the map $H(b) \to H(a)\times_{Y_{r(a)}}Y_{ r(b)}$ is in $\cM$.
\end{proof}

\begin{lem}\label{l:lifting_fact}
Let $(\cC,N,\cM)$ be a factorization category.
Suppose we have a commutative diagram in $\mcal{C}$ of the form
$$\xymatrix{
X \ar[d] \ar[r] & C \ar[d]^{\mcal{M}} \\
Y \ar[r] &  D          .}$$
Then we can embed this diagram in a bigger commutative diagram of the form
$$\xymatrix{X\ar[rr]\ar[dd]\ar[dr]^N & & C\ar[dd]^\cM\\
              & Y' \ar[dl]_\cM\ar[ur]   & \\
Y  \ar[rr]  & &  D ,  \\}$$
such that the induced map $Y'\to Y\times_D C$ is in $\cM$.
\end{lem}

\begin{rem}
Notice the resemblance of Lemma \ref{l:lifting_fact}, to \cite{Bro} I 2, Lemma 1.
\end{rem}

\begin{proof}
Consider the diagram
$$\xymatrix{
X       \ar[d] \ar[r] & C \ar@{=}[d] \\
Y\times_D C \ar[r]\ar[d]^{\mcal{M}}  &   C \ar[d]^{\mcal{M}} \\
Y          \ar[r] &  D .}$$
Since $\mcal{M}\circ N = \Mor(\mcal{C})$, we can factor the map $X \to Y\times_D C$ and obtain
$$\xymatrix{
X       \ar[d]^{N} \ar[r] & C \ar@{=}[d] \\
Y'       \ar[d]^{\mcal{M}} \ar[r] & C \ar@{=}[d] \\
Y\times_D C \ar[r] \ar[d]^{\mcal{M}}       &  C \ar[d]^{\mcal{M}}  \\
Y          \ar[r] &  D  ,}$$
and since $\mcal{M}$ is closed under composition, we get the desired result.
\end{proof}

\begin{lem}\label{l:extend_factorization}
Let $(\cC,N,\cM)$ be a factorization category.
Let $R$ be a finite poset, and let $f:X\to Y$ be a map in $\mcal{C}^{R^{\lhd}}$. Let $X|_R \xrightarrow{g} H \xrightarrow{h} Y|_R$ be a factorization of $f|_R$ such that $g$ is levelwise $N$ and $h$ is special $\cM$.
Then all the factorizations of   $f$  of the form $X \xrightarrow{g'} H' \xrightarrow{h'} Y$, such that $g'$ is levelwise $N$, $h'$ is special $\mcal{M}$ and $H'|_R=H,g'|_R = g,h'|_R = h$, are in natural 1-1 correspondence with all factorizations of the map   $X(\infty) \to \lim_R H \times_{\lim_R Y} Y(\infty)$
of the form $X(\infty) \xrightarrow{g''} H'(\infty) \xrightarrow{h''} \lim_R H \times_{\lim_R Y} Y(\infty)$, such that $g''\in N$ and $h'' \in \mcal{M}$ (in particular there always exists one, since $\cM\circ N=\Mor(\cC)$).
\end{lem}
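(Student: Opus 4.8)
The plan is to unravel exactly what extra data a factorization in $\mcal{C}^{R^{\lhd}}$ carries over one in $\mcal{C}^{R}$. Since $R$ is a finite poset, $R^{\lhd}$ is just $R$ together with one new maximal element $\infty$ (i.e.\ $\infty \geq r$, equivalently there is a morphism $\infty \to r$, for every $r\in R$, and $\infty$ is not below anything). Consequently, to give a diagram $H':R^{\lhd}\to\mcal{C}$ with $H'|_R = H$, together with maps $g':X\to H'$ and $h':H'\to Y$ in $\mcal{C}^{R^{\lhd}}$ restricting to $g$ and $h$ on $R$, is the same as to give a single object $H'(\infty)\in\mcal{C}$ equipped with three pieces of data: a map $g'(\infty):X(\infty)\to H'(\infty)$, a map $h'(\infty):H'(\infty)\to Y(\infty)$, and a compatible family of maps $H'(\infty)\to H(r)$ indexed by $r\in R$; these are subject only to the commutativity constraints forced by the already-fixed $g$, $h$, $f$ on $R$ and by $f(\infty)$.

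First I would repackage the family $\{H'(\infty)\to H(r)\}_{r\in R}$, via the universal property of the limit, as one map $H'(\infty)\to\lim\limits_R H$. The requirement that $h'$ be a map of diagrams then says precisely that this map and $h'(\infty):H'(\infty)\to Y(\infty)$ agree after mapping to $\lim\limits_R Y$, hence amount to a single map $h'':H'(\infty)\to \lim\limits_R H\times_{\lim\limits_R Y} Y(\infty)$. Similarly, because $h\circ g = f$ on $R$ and $h'(\infty)\circ g'(\infty)=f(\infty)$, the remaining map $g'(\infty):X(\infty)\to H'(\infty)$ is constrained exactly by: $h''\circ g'(\infty)$ equals the canonical map $c:X(\infty)\to \lim\limits_R H\times_{\lim\limits_R Y} Y(\infty)$ built from $X(\infty)\to\lim\limits_R X\xrightarrow{g}\lim\limits_R H$ and $f(\infty):X(\infty)\to Y(\infty)$ (these agree in $\lim\limits_R Y$ since $h\circ g=f$). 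So the data $(H'(\infty),g'(\infty),h'')$ is nothing but a factorization of $c$ through $H'(\infty)$, and conversely any such factorization assembles uniquely back into a factorization of $f$ in $\mcal{C}^{R^{\lhd}}$ restricting to the given one on $R$; this is the desired natural bijection. It then remains to see that the side conditions match: since $\infty$ is the unique maximal element, for $r\in R$ the sets $\{s<r\}$ in $R^{\lhd}$ and in $R$ coincide, so $h'$ is special $\mcal{M}$ iff $h$ is (true by hypothesis) together with the single extra condition that $H'(\infty)\to \lim\limits_{s<\infty} H'\times_{\lim\limits_{s<\infty} Y} Y(\infty)$ lie in $\mcal{M}$; but $\{s<\infty\}=R$, so $\lim\limits_{s<\infty}H' = \lim\limits_R H$ and this map is exactly $h''$. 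Likewise $g'$ is levelwise $N$ iff $g$ is (true by hypothesis) and $g'(\infty)=g''\in N$. Hence the factorizations with $g'$ levelwise $N$ and $h'$ special $\mcal{M}$ correspond exactly to the factorizations of $c$ with $g''\in N$ and $h''\in\mcal{M}$, and at least one such exists because $\mcal{M}\circ N = Mor(\mcal{C})$.

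I do not expect a serious obstacle here; it is a bookkeeping lemma. The point requiring the most care is making the identification of the "cone-point" data with a map into the fibre product $\lim\limits_R H\times_{\lim\limits_R Y}Y(\infty)$ completely precise, and verifying that the commutativity constraints packaged into "morphism of diagrams over $R^{\lhd}$" translate into exactly the condition "$(g'(\infty),h'')$ is a factorization of $c$" with nothing omitted and nothing superfluous. This in turn hinges on the elementary observation that $\lim\limits_{s<\infty}$, computed in $R^{\lhd}$, is genuinely the full limit over $R$, which is where finiteness of $R$ and maximality of $\infty$ get used.
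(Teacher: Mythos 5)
Your proposal is correct and follows essentially the same route as the paper: the paper's proof likewise enumerates the cone-point data (an object $H'(\infty)$, a map $H'(\infty)\to\lim_R H$, and a factorization of $f_\infty$ subject to naturality, levelwise-$N$ at $\infty$, and the special condition at $\infty$) and observes that this is exactly a factorization of $X(\infty)\to\lim_R H\times_{\lim_R Y}Y(\infty)$ into $N$ followed by $\mcal{M}$. Your version merely spells out the repackaging into the fibre product and the identification $\lim_{s<\infty}H'=\lim_R H$ in more detail than the paper does.
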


\begin{proof}
This is quite straightforward. For more details see \cite[Lemma 4.2]{BaSc}.
\end{proof}

\begin{lem}\label{l:F_T_onto}
Let $(\cC,N,\cM)$ be a factorization category and let
$f:X\to Y$ be a morphism in $\cC^\cT$, where $\cT$ is a small cofiltered category.
Then the functor $p:\cF_f\to \cT$ is pre-cofinal.
\end{lem}

\begin{proof}
Let $(t, X_t \xrightarrow{g} H \xrightarrow{h} Y_t)$ be an object of $\cF_f$, and let $t'\to t$ be a morphism in $\cT$. It is enough to show that there exists a morphism in $\cF_f$ of the form
$$\xymatrix{
X_{t'} \ar[d] \ar[r]^N & H' \ar[d] \ar[r]^{\mcal{M}} & Y_{t'}  \ar[d]\\
X_t \ar[r]^N &  H \ar[r]^{\mcal{M}} &  Y_t  }$$
such that the left and right vertical maps are induced by the given morphism $t'\to t$.

We have a commutative diagram of the form
$$\xymatrix{
X_{t'} \ar[d] \ar[rr]^f & \empty & Y_{t'}  \ar[d]\\
X_t \ar[r]^N &  H \ar[r]^{\mcal{M}} &  Y_t  .}$$
Thus, we can apply Lemma \ref{l:lifting_fact} with $X:=X_{t'}, Y:=Y_{t'}, C:=H, D:=Y_t$, and get the desired result.
\end{proof}

The following proposition is our main motivation for introducing the concept of a pre-cofinal functor:

\begin{prop}\label{p:lw_factor_gen}
Let $(\cC,N,\cM)$ be a factorization category and let
$f:X\to Y$ be a morphism in $\cC^\cT$, where $\cT$ is a small cofiltered category.

Let $\cI$ be a small cofiltered category, and let $q:\cI\to \cF_f$ be a pre-cofinal functor, relative to $\barr{\cF_f}$. Then we have a cofinal functor $pq:\cI\to \cT$, and an induced factorization
$$(pq)^*X\xrightarrow{g_q} q^*H_f \xrightarrow{h_q} (pq)^*Y$$
of $(pq)^*f:(pq)^*X\to (pq)^*Y$, in $\cC^{\cI}$, such that $h_q$ is a levelwise $\mcal{M}$ map, and $g_q$ is a levelwise $N$ map that belongs to ${}^{\perp}\mcal{M}$ as a map in $\Pro(\cC)$.
(See Definition \ref{d:factorization_cat} for the notations $\cF_f$, $\barr{\cF_f}$, $p$ and $H_f$.)

In particular, it follows that $X\cong (pq)^*X$ and $Y\cong (pq)^*Y$ in $\Pro(\cC)$, and we get a factorization
$$X\xrightarrow{g} H_f^\cI \xrightarrow{h} Y$$
of $f:X\to Y$ in $\Pro(\mcal{C})$, such that $h$ is in $\Lw^{\cong}(\mcal{M})$ and $g$ is in $\Lw^{\cong}(N)\cap  {}^{\perp}\mcal{M}$.
\end{prop}

\begin{proof}
Clearly we have a factorization
$$(pq)^*X\xrightarrow{g_q} q^*H_f \xrightarrow{h_q} (pq)^*Y$$
of $(pq)^*f:(pq)^*X\to (pq)^*Y$, in $\cC^{\cI}$, such that $h_q$ is a levelwise $\mcal{M}$ map and $g_q$ is a levelwise $N$ map.

The functor $q:\cI\to \cF_f$ is pre-cofinal, relative to $\barr{\cF_f}$, and the functor $p:\cF_f\to \cT$ is pre-cofinal by Lemma \ref{l:F_T_onto}. From Lemma \ref{l:compose_onto_big} we get that the functor $pq:\cI\to \cT$ is pre-cofinal, so by Lemma  \ref{l:onto is cofiltered} it is also cofinal.

It thus remains to show that $g_q\in {}^{\perp}\mcal{M}$, as a map in $\Pro(\cC)$.
Consider the following diagram:
$$
\xymatrix{
\{X_{p(q(a))}\}_{a\in \cI} \ar[d]^{g_q} \ar[r] & C \ar[d]^{\mcal{M}} \\
\{H_{q(a)}\}_{a\in \cI}          \ar[r] & D.
}$$
We need to show the existence of a lift in the above square. It follows from the definition of morphisms in $\Pro(\cC)$ that there exists some $a_0 \in \cI$, such that the above square factors as
$$
\xymatrix{
\{X_{p(q(a))}\}_{a\in \cI} \ar[d]^{g_q} \ar[r] & X_{p(q(a_0))} \ar[d]^{g_{q(a_0)}} \ar[r] & C \ar[d]^{\mcal{M}} \\
\{H_{q(a)}\}_{a\in \cI}          \ar[r] & H_{q(a_0)}        \ar[r] & D.}
$$
In order to finish the proof, it is enough to find a morphism $a'_0\to a_0$ in $\cI$, such that in the following diagram we can add a dotted line:
$$\xymatrix{
X_{p(q(a'_0))} \ar[d]^{g_{q(a'_0)}} \ar[d] \ar[r] & X_{p(q(a_0))} \ar[d]^{g_{q(a_0)}} \ar[r] & C \ar[d]^{\mcal{M}} \\
H_{q(a'_0)}  \ar@{..>}[urr]          \ar[r] & H_{q(a_0)}          \ar[r] & D
.}
$$
By Lemma \ref{l:lifting_fact} we have a commutative diagram in $\mcal{C}$ of the form
$$\xymatrix{X_{p(q(a_0))}\ar[rr]\ar[dd]\ar[dr]^N & & C\ar[dd]^\cM\\
              & Z \ar[dl]_\cM\ar[ur]   & \\
 H_{q(a_0)}  \ar[rr]  & &  D .  \\}$$

We thus have a morphism in $\cF_f$ of the form
$$\xymatrix{
X_{p(q(a_0))}\ar[d]^= \ar[r]^N &  Z        \ar[d]^{\cM} \ar[r]^{\cM} & Y_{p(q(a_0))} \ar[d]^{=} \\
X_{p(q(a_0))} \ar[r]_{g_{q(a_0)}}  & H_{q(a_0)}  \ar[r]_{h_{q(a_0)}} &  Y_{p(q(a_0))} . \\}$$
The functor $q:\cI\to \cF_f$ is pre-cofinal, relative to $\barr{\cF_f}$, so there exists a morphism $a'_0\to a_0$ in $\cI$, such that the induced morphism
$$\xymatrix{
X_{p(q(a'_0))}\ar[d] \ar[r]^{g_{q(a'_0)}} &  H_{q(a'_0)} \ar[d] \ar[r]^{h_{q(a'_0)}} & Y_{p(q(a'_0))} \ar[d] \\
X_{p(q(a_0))} \ar[r]_{g_{q(a_0)}}  & H_{q(a_0)}  \ar[r]_{h_{q(a_0)}} &  Y_{p(q(a_0))}  \\}$$
factors as
$$\xymatrix{
X_{p(q(a'_0))}\ar[d] \ar[r]^{g_{q(a'_0)}} &  H_{q(a'_0)} \ar[d] \ar[r]^{h_{q(a'_0)}} & Y_{p(q(a'_0))} \ar[d] \\
X_{p(q(a_0))}\ar[d]^= \ar[r]^N &  Z        \ar[d]^{\cM} \ar[r]^{\cM} & Y_{p(q(a_0))} \ar[d]^{=} \\
X_{p(q(a_0))} \ar[r]_{g_{q(a_0)}}  & H_{q(a_0)}  \ar[r]_{h_{q(a_0)}} &  Y_{p(q(a_0))} .\\}$$
Composing the morphisms $H_{q(a'_0)}\to Z$ and $Z\to C$, we get the desired lift.
\end{proof}

\begin{define}\label{d:sub_factor}
Let $(\cC,N,\cM)$ be a factorization category.

A full sub factorization category is a full subcategory $\cC_s\subseteq\cC$ that is closed under finite limits in $\cC$, such that $(\mcal{C}_s,\cC_s\cap N,\cC_s\cap\mcal{M})$ is a factorization category.

A full sub factorization category $(\cC_s,N_s,\cM_s)$ is called \emph{dense} if for every diagram in $\cC$ of the form
$$A\to H \xrightarrow{\cM} B$$
such that $A,B\in \cC_s$, there exists a diagram in $\cC$ of the form
$$\xymatrix{
A \ar[dr]\ar[r]  & H'  \ar[r]^{\cM} \ar[d] & B  \\
\empty &  H\ar[ur]_{\cM} & \empty }$$
such that $H'\in \cC_s$.
\end{define}

Our main motivation for introducing the concept of a dense subcategory is the following:
\begin{lem}\label{l:dense}
Let $(\cC,N,\cM)$ be a factorization category and let $\cC_s\subseteq\mcal{C}$ be a dense full sub factorization category. Let
$f:X\to Y$ be a morphism in $\cC_s^\cT$, where $\cT$ is a small cofiltered category.

Consider the categories $\cF_f,\barr{\cF_f}$ defined for the factorization category
$(\cC,N,\cM)$ and the natural transformation $f$ as in Definition \ref{d:factorization_cat}. We define the categories $\cF_f^s$ and $\barr{\cF_f^s}$ to be those given by Definition \ref{d:factorization_cat} for the factorization category
$(\cC_s,N_s,\cM_s)$ and the natural transformation $f$.

Then the natural inclusion $\cF_f^s\hookrightarrow \cF_f$ is pre-cofinal, relative to $\barr{\cF_f}$.
\end{lem}

\begin{proof}
Note that $\cF_f^s$ and $\barr{\cF_f^s}$ are just the full subcategories of ${\cF_f}$ and $\barr{\cF_f}$, spanned by objects of the form
$$(t, X_t \xrightarrow{g} H \xrightarrow{h} Y_t),$$
such that $H\in \cC_s$.

Let
$$\xymatrix{
X_t \ar[d] \ar[r]^N & H \ar[d] \ar[r]^{\mcal{M}} & Y_t \ar[d]\\
X_{t'} \ar[r]^N &  H' \ar[r]^{\mcal{M}} & Y_{t'}    }$$
be a morphism in $\cF_f$, such that $H'\in \cC_s$. We need to find a morphism
$$\xymatrix{
X_{t''} \ar[d] \ar[r]^N & H'' \ar[d] \ar[r]^{\mcal{M}} & Y_{t''} \ar[d]\\
X_{t'} \ar[r]^N &  H' \ar[r]^{\mcal{M}} & Y_{t'}    ,}$$
in $\cF_f^s$, that factors through the first morphism, in $\barr{\cF_f}$, as in the following diagram:
$$\xymatrix{
X_{t''} \ar[d] \ar[r]^N & H'' \ar[d] \ar[r]^{\mcal{M}} & Y_{t''} \ar[d]\\
X_t \ar[d] \ar[r]^N & H \ar[d] \ar[r]^{\mcal{M}} & Y_t \ar[d]\\
X_{t'} \ar[r]^N &  H' \ar[r]^{\mcal{M}} & Y_{t'}    .}$$
Thus, it is enough to find a diagram in $\cC$ of the form
$$\xymatrix{
\empty   & H''\ar[dr]^{\cM}  \ar[d] &  \empty \\
X_t\ar[r]^N\ar[ur]^N &  H  \ar[r]^{\cM}& Y_t, }$$
such that $H''\in \cC_s$, and the composition $H''\to H\to H'\times_{Y_{t'}}Y_t$ belongs to $\cM$ (because then we can take $t''=t$, and the morphism $t''\to t$ to be the identity).

Consider the following diagram in $\cC$:
$$X_t\xrightarrow{N} H \xrightarrow{\cM} H'\times_{Y_{t'}}Y_t.$$
$X_t,H'\times_{Y_{t'}}Y_t\in \cC_s$, so by the condition in the theorem, there exists a diagram in $\cC$ of the form
$$\xymatrix{
X_t \ar[dr]_{N}\ar[r]  & H''' \ar[r]^{\cM} \ar[d] & H'\times_{Y_{t'}}Y_t,  \\
\empty &  H\ar[ur]_{\cM} & \empty }$$
such that $H'''\in \cC_s$.
Now, factor the map $X_t \to   H'''$, in $\cC_s$, as
$$\xymatrix{
X_t \ar[drr]_{N}\ar[r]^N & H''\ar[r]^{\cM} & H''' \ar[r]^{\cM} \ar[d] & H'\times_{Y_{t'}}Y_t,  \\
\empty & \empty &  H\ar[ur]_{\cM} & \empty }$$
to get the desired diagram. (Note that we have a pullback square
$$\xymatrix{
 H'\times_{Y_{t'}}Y_t \ar[d] \ar[r]^{\mcal{M}} & Y_t \ar[d]\\
 H' \ar[r]^{\mcal{M}} & Y_{t'}    ,}$$
so that $H'\times_{Y_{t'}}Y_t \xrightarrow{\cM}  Y_t$.)
\end{proof}

We now come to our main proposition about factorization of maps:

\begin{prop}\label{p:factor_gen_0}
Let $(\cC,N,\cM)$ be a factorization category and let $\cC_s\subseteq\mcal{C}$ be a small dense full sub factorization category.

Then every morphism $f:X\to Y$ in $\Pro(\cC_s)$ can be functorially factored in $\Pro(\cC)$ as $X\xrightarrow{g} H_f \xrightarrow{h} Y$, where $h$ is in $\Sp^{\cong}(\mcal{M})$ and $g$ is in $\Lw^{\cong}(N)\cap  {}^{\perp}\mcal{M}$ (see Definition \ref{def mor}).
\end{prop}

\begin{proof}
Without loss of generality we may assume that $f:X\to Y$ is a map in $\cC_s^\cT$ for some (small) cofiltered category $\cT$ (see Lemma \ref{every map natural}).

The proof will consist of finding a cofinite directed set $\cA_f$, together with a cofinal functor $r:\cA_f\to \cT$, and  a factorization $r^*X\xrightarrow{g} H_f \xrightarrow{h} r^*Y$, of $r^*f:r^*X\to r^*Y$, in the category $\cC^{\cA_f}$,  such that $h$ is a special $\mcal{M}$ map, and $g$ is a levelwise $N$ map that belongs to $ {}^{\perp}\mcal{M}$ as a map in $\Pro(\cC)$ (see Definition \ref{def natural}). Then $X\cong r^*X$ and $Y\cong r^*Y$ in $\Pro(\cC)$, and we get the desired factorization.

Let the categories $\cF_f,\barr{\cF_f},\cF_f^s$ and $\barr{\cF_f^s}$ be defined as in Lemma \ref{l:dense}.

Using Proposition \ref{p:lw_factor_gen} we see that to prove  Proposition \ref{p:factor_gen_0} it is enough to find a small cofinite directed set $\cA_f$, together with a functor $r:\cA_f\to \cT$, and  a factorization $r^*X\xrightarrow{g} H \xrightarrow{h} r^*Y$, of $r^*f:r^*X\to r^*Y$, in the category $\cC_s^{\cA_f}$,  such that $h$ is a special $\mcal{M}$ map, $g$ is a levelwise $N$ map,  and the induced composition $\cA_f \to \cF_f^s\hookrightarrow \cF_f$ is pre-cofinal relative to $\barr{\cF_f}$, where $\cA_f \to \cF_f^s$ is the map given by Lemma \ref{l:factor} (2).

We shall define $\cA_f$, $r:\cA_f\to \cT$ and the factorization $r^*X\xrightarrow{g} H \xrightarrow{h} r^*Y$ inductively (see Definition \ref{def_deg}).

We start by defining $\cA^{-1}_f := \emptyset$ and $r:\cA^{-1}_f \to \cT$, $r^*X\xrightarrow{g} H \xrightarrow{h} r^*Y$ in the only possible way.

Now suppose we have defined a small cofinite poset $\cA^n_f$ (with objects of degree up to $n$), a functor $r^n:\cA^n_f \to \cT$ and a factorization of $r^{n*}f$, denoted $r^{n*}X\xrightarrow{g} H \xrightarrow{h} r^{n*}Y$, such that $g$ is levelwise $N$ and $h$ is special $\mcal{M}$.

We define $\cB^{n+1}_f$ to be the set of all tuples  $(R,r:R^{\lhd}\to \cT, r^*X \xrightarrow{g} H \xrightarrow{h}  r^*Y)$, such that
$R$ is a finite section in $\cA^n_f$ (see Definition \ref{d:section}), $r:R^{\lhd}\to \cT$ is a functor such that $r|_R = r^n|_R$ and
$r^*X \xrightarrow{g} H \xrightarrow{h}  r^*Y$ is a factorization of $r^*f$ in $\cC_s^{R^{\lhd}}$, such that  $g$ is levelwise $N$, $h$ is special $\mcal{M}$ and, when  restricted to $\cC_s^{R}$, this factorization is the same as $r^{n*}X\xrightarrow{g} H \xrightarrow{h} r^{n*}Y$ restricted to $\cC_s^{R}$.

As a set, we define $\cA^{n+1}_f := \cA^n_f \coprod \cB_f^{n+1}$. For $c\in \cA^{n}_f$ we set $c < (R,r:R^{\lhd}\to \cT, r^*X \xrightarrow{g} H \xrightarrow{h}  r^*Y)$ iff $c \in R$. Thus we have defined an $n+1$-level cofinite poset $\cA^{n+1}_f$.
We now define $r^{n+1}:\cA^{n+1}_f \to \cT$ by $r^{n+1}|_{\cA^n_f} = r^{n} $ and $r^{n+1}(R,r:R^{\lhd}\to \cT, r^*X \xrightarrow{g} H \xrightarrow{h}  r^*Y) = r(\infty)$, where $\infty \in R^{\lhd}$ is the initial object. It is clear the factorization  $r^{n*}X\xrightarrow{g} H \xrightarrow{h} r^{n*}Y$ extends naturally to a factorization
$r^{n+1*}X\xrightarrow{g} H \xrightarrow{h} r^{n+1*}Y$ such that $g$ is levelwise $N$ and $h$ is special $\mcal{M}$.

Notice that at level zero we have $\cA^0_f=\cB^0_f=\Ob(\cF_f^s)$, and the map $r^{0}:\cA^{0}_f=\Ob(\cF_f^s) \to \cT$ is the natural projection.

Now we define $\cA_f = \cup \cA^n_f$.
Clearly $\cA_f$ is a small cofinite poset.
It is clear that by taking the limit on all the $r^n$ we obtain a functor $r:\cA_f \to \cT$, and a factorization $r^{*}X\xrightarrow{g} H \xrightarrow{h} r^{*}Y$ of $r^{*}X\xrightarrow{r^*f} r^{*}Y$, such that $g$ is levelwise $N$ and $h$ is special $\mcal{M}$.

Using Lemmas \ref{l:dense} and \ref{l:compose_onto_big} we see that we need only prove the following two things:
\begin{enumerate}
\item $\cA_f$ is cofiltered.
\item The induced functor $q:\cA_f \to \cF_f^s$ is pre-cofinal.
\end{enumerate}

To prove that $\cA_f$ is cofiltered we need to  show that for every finite section $R \subset \cA_f$,
there exists an element $c \in \cA_f$ such that $c\geq a$ for every $a\in R$ (see Lemma \ref{l:eqiv_cofiltered}).
Indeed let $R \subset \cA_f$ be a finite section. Since $R$ is finite there exists some $n \in \NN$ such that
$R \subset \cA_f^{n}$. We can take $c$ to be any element in $\cB^{n+1}_f$ of the form $(R,r:R^{\lhd}\to \cT, r^*X \xrightarrow{g} H \xrightarrow{h}  r^*Y)$. To show that such an element exists, note that since $\cT$ is cofiltered we can extend the functor $r:R\to \cT$ to a functor $r:R^{\lhd}\to \cT$ (see Lemma \ref{l:eqiv_cofiltered}). Now, the existence of the suitable factorization $r^*X \xrightarrow{g} H \xrightarrow{h}  r^*Y$ follows  from Lemma \ref{l:extend_factorization}.

We are now left to show that the functor $q:\cA_f\to \cF_f^s$ is pre-cofinal. Let $c\in \cA_f$, and let $d:i \to q(c)$ be a map in $\cF_f^s$. There exists a unique $n\geq 1$ such that $c\in \cA^n_f\setminus \cA^{n-1}_f=\cB^n_f$.
We can write $c$ as $c = (R,r:R^{\lhd} \to \cT,r^*X \xrightarrow{g} H \xrightarrow{h}  r^*Y)$, where $R$ is a finite section in $\cA^{n-1}_f$.
Then we have $q(c) = (r(\infty),X_{r(\infty)} \xrightarrow{g} H(\infty) \xrightarrow{h} Y_{r(\infty)})$, where $\infty\in R^{\lhd}$ is the initial object.

Note that $R_c:=\{a\in \cA^n_f|c\geq a\}\subseteq \cA^n_f $ is naturally isomorphic to $R^{\lhd}$.
Let
$$\xymatrix{
X_t  \ar[r]^{g} \ar[d] & H\ar[r]^{h}\ar[d] & Y_t \ar[d]\\
X_{r(c)} \ar[r]^{g_{c}}  & H_{c}\ar[r]^{h_{c}} & Y_{r(c)}\\
}$$
be the morphism $d$ in $\cF_f^s$.

Now it is enough to find $c' \in \cB^{n+1}_f$ such that $c'>c$, $q(c') = (t,X_t \xrightarrow{g} H \xrightarrow{h} Y_t)$, and the induced map $q(c') \to q(c)$ is exactly $d$.

We shall take $c' := (R_c,r':R_c^{\lhd}\to \cT, r'^*X \xrightarrow{g} H' \xrightarrow{h}  r'^*Y)$, where
$$r'|_{R_c} = r'|_{R^{\lhd}} = r|_{R^{\lhd}},\:\:r'(\infty') = t,$$
and $\infty'\in R_c^{\lhd}$ is the initial object.

To define the functor $r':R_c^{\lhd}\to \cT$, it remains to define a morphism $r'(\infty') = t\to r(c)$, and we take this morphism to be the one given by $d:i \to q(c)$. We extend the factorization $r^*X \xrightarrow{g} H \xrightarrow{h}  r^*Y$ to a factorization $r'^*X \xrightarrow{g} H' \xrightarrow{h}  r'^*Y$, using the morphism $d:i \to q(c)$. To show that $c' \in \cB^{n+1}_f$, it remains to check that $H' \xrightarrow{h}  r'^*Y$ is a special $\cM$ map in $\cC_s^{R_c^{\lhd}}$. We only need to check the special condition on $\infty'\in R_c^{\lhd}$. This just says that the induced map $k:H\to  H_{c}\times_{Y_{r(c)}}Y_t$ belongs to $\cM$. But this follows from the fact that $d$ is a morphism in $\cF_f^s$.

Now it is clear that $c'>c$, $q(c') = (t,X_t \xrightarrow{g} H \xrightarrow{h} Y_t)$, and the induced map $q(c') \to q(c)$ is exactly $d$.
\end{proof}

\begin{rem}
Proposition \ref{p:factor_gen_0} holds verbatim also if we assume that the dense full sub factorization category $\cC_s\subseteq\mcal{C}$ is essentially small instead of small. This is because we can then find a small equivalent full subcategory $\cC'_s\subseteq\cC_s$. Then $\cC'_s$ is a dense full sub fibration category of $\cC$ and we can use it in Proposition \ref{p:factor_gen_0} instead of $\cC_s$.
\end{rem}

Proposition \ref{p:factor_gen_0} has the following two immediate corollaries:
\begin{prop}\label{p:factor_gen_big}
Let $(\cC,N,\cM)$ be a factorization category.
Suppose that for every morphism $f:X\to Y$ in $\Pro(\cC)$, we can find an essentially small dense full sub factorization category $\cC_s\subseteq\cC$ (see Definition \ref{d:sub_factor}) such that $X,Y\in \Pro(\cC_s)$.

Then every morphism $f:X\to Y$ in $\Pro(\cC)$ can be factored \emph{(not necessarily functorially)} in $\Pro(\cC)$ as $X\xrightarrow{g} H_f \xrightarrow{h} Y$, where $h$ is in $\Sp^{\cong}(\mcal{M})$ and $g$ is in $\Lw^{\cong}(N)\cap  {}^{\perp}\mcal{M}$ (see Definition \ref{def mor}).
\end{prop}

\begin{prop}\label{p:factor_gen}
Let $(\cC,N,\cM)$ be an essentially small factorization category.
Then every morphism $f:X\to Y$ in $\Pro(\cC)$ can be functorially factored as $X\xrightarrow{g} H_f \xrightarrow{h} Y$, where $h$ is in $\Sp^{\cong}(\mcal{M})$ and $g$ is in $\Lw^{\cong}(N)\cap  {}^{\perp}\mcal{M}$ (see Definition \ref{def mor}).
\end{prop}

\begin{rem}\
\begin{enumerate}
\item Notice that by assuming that our factorization category is essentially small we gain \emph{functorial} factorizations.
\item
There is a strong connection between Proposition \ref{p:factor_gen} and a dual version of Quillen's small object argument. This, as well as other aspects of the ``small" case, will be discussed in a future paper.
\end{enumerate}
\end{rem}

\section{The Model Structure on $\Pro(\mcal{C})$}\label{s:model}
In this section we show how to construct a model structure out of a weak fibration category. Namely, given a weak fibration category $(\cC,\cW,\cF)$ that satisfies two extra conditions (which we call ``homotopically small" and ``pro-admissible"), we shall construct a model structure on $\Pro(\mcal{C})$. The weak equivalences in this model structure will be maps that are isomorphic to a natural transformation that is a levelwise $\cW$ map. The cofibrations will be maps that have the left lifting property with respect to $\cF\cap\cW$ (considered as maps in $\Pro(\cC)$). The fibrations will be maps that are retracts of natural transformations that are special $\cF$ maps. However, we will not get a model category in the modern sense of the word since it will not necessarily have arbitrary colimits and functorial factorizations. Thus, in order to make our terminology clear, we make the following definition:

\begin{define}\label{d:model_structure}
A \emph{model structure} on a category $\cC$ consists of three subcategories $\cW,\cF,\cC$, each containing all the objects of $\cC$, called weak equivalences, fibrations, and cofibrations, satisfying the following properties:
\begin{enumerate}
\item $\cW$ satisfies the two out of three property.
\item $\cW,\cF,\cC$ are closed under retracts.
\item Any morphism in $\cC$ can be factored (not necessarily functorially) into a cofibration followed by a trivial
fibration, and into a trivial cofibration followed by a fibration.
\item The trivial cofibrations have the left lifting property with respect to fibrations, and the cofibrations have the
left lifting property with respect to trivial fibrations.
\end{enumerate}
\end{define}

Thus, a model structure on a category just means that all the usual axioms for a model category are satisfied, except maybe completeness, cocompleteness and functoriality of the factorizations.

\begin{define}\label{d:rel}
A \emph{relative category} is a pair $(\cC,\cW)$ consisting of a category $\cC$, and a subcategory $\cW\subseteq \cC$ that contains all the isomorphisms and satisfies the two out of three property. The maps in $\cW$ are called \emph{weak equivalences.}
\end{define}

\begin{rem}
Any weak fibration category is naturally a relative category, when ignoring the fibrations.
\end{rem}

\begin{define}\label{d:admiss}
A relative category $(\cC,\cW)$ is called \emph{pro-admissible} (respectively \emph{ind admissible}) if $\Lw^{\cong}(\mcal{W})\subseteq  \Pro(\cC)^{[1]}$ (respectively $\Lw^{\cong}(\mcal{W})\subseteq  \Ind(\mcal{C})^{[1]}$) satisfies the two out of three property.
\end{define}

\begin{example}[Isaksen]\label{e:proper}
Let $\cM$ be a proper model category and $\cW$ the class of weak equivalences in $\cM$. Then it follows from Lemma 3.5 and Lemma 3.6 of~\cite{Isa} that $(\cM,\cW)$ is pro-admissible.
\end{example}

\begin{rem}
Example~\ref{e:proper} can be generalized to a wider class of relative categories via the notion of \textbf{proper factorizations}, see~\cite[Proposition 3.7]{BaSc1}.
\end{rem}

\begin{example}\label{e:finite SS}
Let $\cS_f$ denote the category of finite simplicial sets, that is, simplicial sets having a finite number of non-degenerate simplicies. Let $\cW$ denote the class of weak equivalences between objects in $\cS_f$ (in the standard sense of Quillen). Then by \cite[Theorem 4.6]{BaSc1} the relative category $(\cS_f,\cW)$ is ind-admissible (or in other words, $(\cS_f^{\op},\cW^{\op})$ is pro-admissible).
\end{example}

\begin{rem}
Example~\ref{e:finite SS} was used in \cite{BaSc1} to show that the category of weak equivalences in the standard model structure on simplicial sets is finitely accessible.
\end{rem}

\begin{example}
Let $\CM$ denote the category compact metrizable spaces and let $\cW$ denote the class of homotopy equivalences between them. Then it is shown in \cite{Bar} that the relative category $(\CM,\cW)$ is ind-admissible (or in other words, that $(\CM^{\op},\cW^{\op})$ is pro-admissible).
\end{example}

\begin{example}
Let $\Csep$ denote the category separable $C^*$-algebras and let $\cW$ denote the class of homotopy equivalences between them. Then it is shown in \cite{BJM} that the relative category $(\Csep,\cW)$ is pro-admissible.
\end{example}

\begin{rem}
We do not know of any example of a relative category which is not pro-admissible.
\end{rem}

\begin{define}\label{d:sub_weak}
Let $(\mcal{C},\mcal{W},\mcal{F})$ be a weak fibration category. We define:
\begin{enumerate}
\item A full sub weak fibration category is a full subcategory $\cC_s\subseteq\cC$ that is closed under finite limits in $\cC$, such that $(\mcal{C}_s,\cC_s\cap\mcal{W},\cC_s\cap\mcal{F})$ is a weak fibration category.

\item A full sub weak fibration category $\cC_s\subseteq\cC$ is called \emph{dense} if the following conditions hold:
\begin{enumerate}
\item The full sub factorization category $(\cC_s,\cW_s,\cF_s)$ of $(\cC,\cW,\cF)$ is dense (see Definition \ref{d:sub_factor}).
\item The full sub factorization category $(\cC_s,\cC_s,\cF_s\cap\cW_s)$ of $(\cC,\cC,\cF\cap\cW)$ is dense (see Definition \ref{d:sub_factor}).
\end{enumerate}
\item The weak fibration category $\cC$ is called \emph{homotopically small} if for every object $X$ in $\Pro(\cC)$, we can find an essentially small dense full sub weak fibration category $\cC_s\subseteq\mcal{C}$ such that $X\in \Pro(\cC_s)$.
\end{enumerate}
\end{define}

\begin{rem}
Note that any essentially small weak fibration category is clearly homotopically small.
\end{rem}

\begin{lem}\label{l:hom small mor}
Let $\cC$ be a homotopically small weak fibration category and let $f:X\to Y$ be a morphism in $\Pro(\cC)$. Then there exists an essentially small dense full sub weak fibration category $\cC_s\subseteq\mcal{C}$ such that $X,Y\in \Pro(\cC_s)$.
\end{lem}

\begin{proof}
Suppose that $X=\{X_i\}_{i\in\cI}$ and $Y=\{Y_j\}_{j\in\cJ}$. Clearly the categories $\cI^{\rhd}$ and $\cJ^{\rhd}$ are also cofiltered categories (see Definition \ref{d:triangle}) and we can extend $X$ and $Y$ to diagrams $X^{\rhd}:\cI^{\rhd}\to\cC$ and $Y^{\rhd}:\cJ^{\rhd}\to\cC$ by defining $X(-\infty)=Y(-\infty)=*$, where $*\in \cC$ is the terminal object. Now we can apply the definition of homotopically small to the object
$$\{X^{\rhd}_i\times Y^{\rhd}_j\}_{(i,j)\in\cI^{\rhd}\times \cJ^{\rhd}}$$
in $\Pro(\cC)$.
\end{proof}

\begin{lem}\label{cardinal_condition}
Let $(\cC,\cW,\cF)$ be a weak fibration category and $\gamma$ an infinite cardinal. Suppose that for every cardinal $\lambda\geq\gamma$ we are given a full sub weak fibration category $\cC_{\lambda}\subseteq\cC$ such that the following hold:
\begin{enumerate}
\item For every $\lambda\geq\gamma$ we have that $\cC_{\lambda}\subseteq\cC$ is essentially small and dense.
\item For every $\mu\geq\lambda\geq\gamma$ we have $\cC_{\lambda}\subseteq\cC_\mu$.
\item We have $\bigcup_{\lambda\geq\gamma}\cC_\lambda=\cC$.
\end{enumerate}
Then $(\cC,\cW,\cF)$ is homotopicaly small.
\end{lem}

\begin{proof}
Let $\{X\}_{i\in\cI}$ be an object in $Pro(\cC)$. Using the fact that $\cI$ is small, and conditions (2) and (3), we can choose some cardinal $\lambda \geq \gamma$ such that  $X_i\in \cC_\lambda$ for every $i\in\cI$. By condition (1) the  full sub weak fibration category
$\cC_\lambda\subseteq \cC$ is dense and essentially small and we have $X\in Pro(\cC_\lambda)$.
\end{proof}

\begin{example}\label{e:simplicial}
Let $\cS$ be the category of simplicial sets, endowed with the Kan-Quillen model structure. Then $\cS$ is a weak fibration category.

Let $\lambda$ be a cardinal. We call a simplicial set $X$ $\lambda$-\emph{bounded} if $|X_n| \leq \lambda$  for every $n\in \mathbb{N}$. We denote by $\cS_{\lambda}$ the full subcategory of  $\cS$ spanned by the $\lambda$-bounded simplicial sets.

Then it follows from Proposition \ref{p:SPS_wfc_l} and Proposition \ref{p:SPS_dense}
that if $\lambda \geq \aleph_0$ then $\cS_{\lambda}$ is an essentially small dense full sub weak fibration category of $\cS$. In particular, it follows from Lemma \ref{cardinal_condition} that the weak fibration category $\cS$ is homotopically small.
\end{example}

\begin{prop}\label{p:acyclic cofibration}
Let $(\cC,\cW,\cF)$ be a weak fibration category. Then in $\Pro(\cC)$ we have
$${}^{\perp}(\cF \cap \cW)\cap \Lw^{\cong}(\cW)={}^{\perp}\cF.$$
\end{prop}

\begin{proof}
Since $\cC$ is a weak fibration category we know that $\cC$ has finite limits and that $\Mor(\cC)=\cF\circ\cW$. By~\cite[Proposition 4.1]{BaSc} we know that $\Mor(\Pro(\cC))=\Sp^{\cong}(\cF)\circ \Lw^{\cong}(\cW)$. Now, by Lemmas \ref{l:SpMo_is_Mo}, \ref{l_lift} and \ref{l:ret_lw} (in this order) we have that
\[{}^{\perp} \cF={}^{\perp}\Sp^{\cong}(\cF)\subseteq \R(\Lw^{\cong}(\cW))=\Lw^{\cong}(\cW).\]
Thus we clearly have
$${}^{\perp}\cF\subseteq{}^{\perp}(\cF \cap \cW)\cap \Lw^{\cong}(\cW).$$

We are left to show that
$${}^{\perp}(\cF \cap \cW)\cap \Lw^{\cong}(\cW) \subseteq {}^\perp \mcal{F}.$$
Let $X\to Y$ be a map in ${}^{\perp}(\cF \cap \cW) \cap \Lw^{\cong}(\cW)$. We need to show that there exists a lift in every square of the form
$$\xymatrix{
 X \ar[d]\ar[r]& A \ar[d]^{\mcal{F}}\\
 Y \ar[r]     & B.}$$
Without loss of generality, we may assume that $X\to Y$ is a natural transformation, which is a levelwise $\mcal{W}$-map. Thus we have a diagram of the form
$$\xymatrix{
\{X_t\}_{t\in \cT} \ar[d]_{{}^{\perp}(\cF \cap \cW)}\ar[r]& A \ar[d]^{{\mcal{F}}}\\
 \{Y_t\}_{t\in \cT} \ar[r]    &  B.}$$
By the definition of morphisms in $\Pro(\cC)$, there exists $t \in \cT$ such that
the above square factors as
$$
\xymatrix{
\{X_t\}_{t\in \cT} \ar[d]_{{}^{\perp}(\cF \cap \cW)}\ar[r]& X_t \ar[d]^{\mcal{W}}\ar[r]& A \ar[d]^{{\mcal{F}}}\\
\{Y_t\}_{t\in \cT}      \ar[r]     & Y_t \ar[r]     &  B.}$$
By taking the fiber product we get the following diagram:
$$\xymatrix{
\{X_t\}_{t\in \cT} \ar[dd]_{{}^{\perp}(\cF \cap \cW)}\ar[r]& X_t \ar[d] \ar@/_3pc/[dd]_{\mcal{W}} \ar[r]& A \ar@{=}[d]\\
 \empty               &Y_t\times_B A \ar[d]^{\mcal{F}}\ar[r]& A \ar[d]^{{\mcal{F}}} \\
\{Y_t\}_{t\in \cT} \ar[r]     & Y_t \ar[r]     &  B.}$$
Factoring the map $X_t \to Y_t\times_B A$ as $X_t \xrightarrow{\cF}H\xrightarrow{\cW}  Y_t\times_B A$ and composing, we obtain
$$\xymatrix{
\{X_t\}_{t\in \cT}  \ar@{=}[d]\ar[r]& X_t \ar[d]^{\mcal{W}}  \ar[r]&  A \ar@{=}[d] \\
\{X_t\}_{t\in \cT}  \ar[d]_{{}^{\perp}(\cF \cap \cW)} \ar[r]&     H\ar[r] \ar[d]^{\mcal{F} \cap \mcal{W}}& A \ar[d]^{{\mcal{F}}}  \\
\{Y_t\}_{t\in \cT}  \ar[r]     & Y_t \ar[r]     &  B,}$$
where the map $H \to Y_t$ belongs to $\mcal{W}$, because $\mcal{W}$ has the two out of three property.
But now we clearly have a lift in the left bottom square.
\end{proof}

\begin{thm}\label{t:model_big}
Let $(\mcal{C},\mcal{W},\mcal{F})$ be a homotopically small (see Definition \ref{d:sub_weak}) pro-admissible (see Definition \ref{d:admiss}) weak fibration category.
Then there exists a model structure on $\Pro(\mcal{C})$ (see Definition \ref{d:model_structure}) such that:
\begin{enumerate}
\item The weak equivalences are $\mathbf{W} := \Lw^{\cong}(\mcal{W})$.
\item The fibrations are $\mathbf{F} := \R(\Sp^{\cong}(\mcal{F}))$.
\item The cofibrations are $\mathbf{C} := {}^{\perp} (\mcal{F}\cap \mcal{W})$.
\end{enumerate}
Moreover, the acyclic fibrations in this model structure are precisely
$$\mathbf{F}\cap\mathbf{W} = \R(\Sp^{\cong}(\mcal{F}\cap\cW)),$$
and the acyclic cofibrations are precisely
$$\mathbf{C}\cap\mathbf{W} = {}^{\perp} \mcal{F}.$$
\end{thm}

\begin{rem}\label{r:complete}
The proof of \cite{IsaS}, Proposition 11.1, shows that $\Pro(\cC)$ is complete (since $\cC$ has finite limits). This proof also shows that (for any cardinal $\kappa$) if $\cC$ has ($\kappa$-)small colimits, so does $\Pro(\cC)$.
\end{rem}

\begin{example}
Combining Example \ref{e:proper} with Example \ref{e:simplicial} we see that Theorem \ref{t:model_big} applies for the weak fibration structure on simplicial sets, given by the standard model structure.
\end{example}

\begin{proof}
We verify the different axioms of a model structure appearing in Definition \ref{d:model_structure}.

Clearly, the classes $\mathbf{W},\mathbf{F}$ and $\mathbf{C}$ contain all isomorphisms and $\mathbf{C}$ is a  subcategory.
Since $\cC$ is pro-admissible, the class $\mathbf{W}$ satisfies the two out of three property (and is in particular a subcategory), so Condition (1) in Definition \ref{d:model_structure} is satisfied.

The class $\mathbf{F}$ is trivially closed under retracts. From Lemma \ref{l:ret_lw} if follows that $\mathbf{W}$ is closed under retracts. From Lemma \ref{c:ret_lift} we get that $\mathbf{C}$ is closed under retracts, so Condition (2) in Definition \ref{d:model_structure} is satisfied.

By Corollary \ref{l:forF_sp_is_lw} we have
$$\Sp^{\cong}(\mcal{F} \cap \mcal{W}) \subseteq \Lw^{\cong}(\mcal{F} \cap \mcal{W}) \subseteq  \Lw^{\cong}(\mcal{W}) = \mathbf{W}.$$
We also have
$$\Sp^{\cong}(\mcal{F} \cap \mcal{W}) \subseteq \Sp^{\cong}(\mcal{F}) \subseteq \R(\Sp^{\cong}(\mcal{F}))=\mathbf{F}.$$
Since $\mathbf{F}$ and $\mathbf{W}$ are closed under retracts, we have
$$\R(\Sp^{\cong}(\mcal{F} \cap \mcal{W})) \subseteq \mathbf{F}\cap \mathbf{W} .$$

Let us now verify Condition (3) in Definition \ref{d:model_structure}.
Let $X\to Y$ be a map in $\Pro(\cC)$. By Lemma \ref{l:hom small mor}, we can apply Proposition \ref{p:factor_gen_big} for the factorization category $(\cC,\mcal{C},\mcal{F} \cap \mcal{W})$, and get a factorization of $X\to Y$ of the form
$$ X \xrightarrow{{}^\perp (\mcal{F} \cap \mcal{W})} Z \xrightarrow{\Sp^{\cong}(\mcal{F}\cap \mcal{W})} Y. $$
But $\mathbf{C} = {}^\perp (\mcal{F} \cap \mcal{W})$, and we have shown that
$\Sp^{\cong}(\mcal{F}\cap \mcal{W}) \subseteq \mathbf{F} \cap \mathbf{W}$, so we have factored $X\to Y$ as
$$ X \xrightarrow{\mathbf{C}} Z \xrightarrow{\mathbf{F}\cap \mathbf{W}} Y.$$
By Lemma \ref{l:hom small mor}, we can apply Proposition \ref{p:factor_gen_big} also for the factorization category $(\cC,\mcal{W},\mcal{F})$, and get a factorization of $X\to Y$ of the form
$$ X \xrightarrow{ \Lw^{\cong}(\mcal{W})\cap{}^\perp \mcal{F}} Z \xrightarrow{\Sp^{\cong}(\mcal{F})} Y. $$
But $\Sp^{\cong}(\mcal{F}) \subseteq \mathbf{F} $, $\mathbf{W} = \Lw^{\cong}(\mcal{W})$ and $ {}^\perp \mcal{F} \subseteq {}^\perp (\mcal{F}\cap \mcal{W})=\mathbf{C}$, so we have factored $X\to Y$ as
$$ X \xrightarrow{\mathbf{C} \cap \mathbf{W}} Z \xrightarrow{\mathbf{F}} Y, $$
and we get that  Condition (3) in Definition \ref{d:model_structure} is satisfied.

We now turn to Condition (4) in Definition \ref{d:model_structure}.
From Proposition \ref{p:acyclic cofibration}, Lemma \ref{l:SpMo_is_Mo} and Lemma \ref{c:ret_lift} we get that
$$\mathbf{C}\cap \mathbf{W} ={}^{\perp} \cF={}^{\perp} \Sp^{\cong}(\cF)={}^{\perp} \R(\Sp^{\cong}(\cF))={}^{\perp} \mathbf{F},$$
so in particular $\mathbf{C}\cap \mathbf{W}\perp \mathbf{F},$
or
$\mathbf{F} \subseteq (\mathbf{C}\cap \mathbf{W})^{\perp} .$
Before verifying the other part of Condition (4), we take a pause to show that
$\mathbf{F}$ is closed under composition.
Since
$\mathbf{F}$ is closed under retracts, and we have factorizations $\Mor(\Pro(\cC))=\mathbf{F}\circ(\mathbf{C} \cap \mathbf{W})$, we get by Lemma \ref{l_lift} that $(\mathbf{C}\cap \mathbf{W})^{\perp} \subseteq \mathbf{F}.$
Combining this with what we have just shown, we obtain
$$\mathbf{F} = (\mathbf{C}\cap \mathbf{W})^{\perp},$$
so in particular, $\mathbf{F}$ is closed under composition.

It remains to verify that $\mathbf{C}\perp \mathbf{F}\cap \mathbf{W}$ and to show that
$$\mathbf{F}\cap \mathbf{W} = \R(\Sp^{\cong}(\mcal{F} \cap \mcal{W})).$$
To do this we will need to use again the pro-admissibility of $\cC$.
Consider the category $\Pro_{\mathbf{W}}(\cC)$ with objects the same as $\Pro(\cC)$, but with morphisms only maps in $\mathbf{W}$ (recall that $\mathbf{W}$ is closed under composition).
Applying Proposition \ref{p:factor_gen_big} for the factorization category $(\cC,\mcal{C},\mcal{F} \cap \mcal{W})$ we get that $$\Mor(\Pro(\cC))=\R(\Sp^{\cong}(\mcal{F}\cap \mcal{W}))\circ\mathbf{C}.$$
Since $ \R(\Sp^{\cong}(\mcal{F} \cap \mcal{W})) \subseteq \mathbf{W}$ and $\mathbf{W}$ has the two out of three property, we get that
$$\Mor(\Pro_{\mathbf{W}}(\cC))=\R(\Sp^{\cong}(\mcal{F}\cap \mcal{W}))\circ(\mathbf{C}\cap\mathbf{W}).$$
Recall that we have shown that
$\mathbf{F} = (\mathbf{C}\cap \mathbf{W})^{\perp} .$
Using Lemma \ref{l_lift}, we get that
$$\mathbf{F}\cap\mathbf{W}=(\mathbf{C}\cap \mathbf{W})^{\perp} \cap\mathbf{W} \subseteq (\mathbf{C}\cap \mathbf{W})^{\perp}_{\Pro_{\mathbf{W}}(\cC)}\subseteq \R(\Sp^{\cong}(\mcal{F} \cap \mcal{W})).$$
Combining this with what we have shown before we obtain
$$\mathbf{F}\cap \mathbf{W} = \R(\Sp^{\cong}(\mcal{F} \cap \mcal{W})).$$
Using Lemma \ref{l:SpMo_is_Mo} and Lemma \ref{c:ret_lift} we get that
$$\mathbf{C}  = {}^\perp (\mcal{F}\cap \mcal{W})  = {}^\perp \Sp^{\cong}(\mcal{F}\cap \mcal{W}) = {}^\perp \R(\Sp^{\cong}(\mcal{F}\cap \mcal{W})) = {}^\perp (\mathbf{F}\cap \mathbf{W}).$$
Thus Condition (4) in Definition \ref{d:model_structure} is satisfied and we are done.
\end{proof}

\begin{rem}\label{r:proper}
In this remark we compare Theorem \ref{t:model_big} with the main result in Isaksen's paper \cite{Isa}.
\begin{enumerate}
\item Theorem \ref{t:model_big} was proved by Isaksen for the case that the weak fibration category $(\mcal{C},\mcal{W},\mcal{F})$ comes from a model category $(\mcal{C},\mcal{C}of,\mcal{W},\mcal{F})$. The condition of being homotopically small is then not needed. He also shows that in this case the cofibrations in $\Pro(\mcal{C})$ are given by $\mathbf{C}=\Lw^{\cong}(\mcal{C}of)$ and the acyclic cofibrations in $\Pro(\mcal{C})$ are given by $\mathbf{C}\cap\mathbf{W}=\Lw^{\cong}(\mcal{C}of\cap \cW)$. Note that the results of~\cite{Isa} are stated for a proper model category $\cC$, however, the properness of $\cC$ is only used to show that $\cC$ is pro-admissible (see Example \ref{e:proper}), while the arguments of~\cite{Isa} apply verbatim to any pro-admissible model category.

\item The approach taken by Isaksen is to begin with a model structure on $\mcal{C}$, and to use it to define a model structure also on $\Pro(\mcal{C})$. As we see here, the latter may exist without the former. Namely, $\mcal{C}$ can be a weak fibration category that is not a model category, while on $\Pro(\mcal{C})$ there will still be an induced model structure. The main reason for this phenomenon is that the absence of an initial factorization in $\mcal{C}$ can be solved when working in $\Pro(\mcal{C})$ by simply ``running over" all possible factorizations (see the introduction, and the proof of Proposition \ref{p:factor_gen_0}).
\end{enumerate}
\end{rem}

\section{Weak Right Quillen Functors}\label{s:Quillen}
In this section we discuss a natural notion of a morphism between weak fibration categories, which we call a \emph{weak right Quillen functor}. We discuss when a weak right Quillen functor between homotopically small pro admissible weak fibration categories gives rise to a right Quillen functor between the corresponding model structures on the pro categories.

\begin{define}
Let $F:\mcal{D}\to \mcal{C}$ be a functor between two weak fibration categories. Then $F$ is called a \emph{weak right Quillen functor} if $F$ commutes with finite limits, and preserves fibrations and trivial fibrations.
\end{define}

\begin{rem}
If $F:\mcal{D}\to \mcal{C}$ is a weak right Quillen functor between \emph{model} categories then $F$ is not necessarily a right Quillen functor since $F$ is not assumed to have a left adjoint.
\end{rem}

The main fact we want to prove about weak right Quillen functors is the following:

\begin{prop}\label{p:RQFunc_big}
Let $F:\mcal{D}\to \mcal{C}$ be a weak right Quillen functor between two homotopically small pro admissible weak fibration categories. Then the prolongation of $F$
$$\Pro(F):\Pro(\mcal{D})\to \Pro(\mcal{C})$$
preserves fibrations and trivial fibrations in the model structures defined in Theorem \ref{t:model_big}.
\end{prop}

\begin{proof}
For simplicity we write $F$ instead of $\Pro(F)$. We will show that $F$ preserves fibrations, and the proof that $F$ preserves trivial fibrations is exactly the same.

Let $f:X\to Y$ be a fibration in $\Pro(\mcal{D})$. Then by definition $f\in \R(\Sp(\mcal{F}_{\cD}))$. We need to show that $F(f)\in \R(\Sp(\mcal{F}_{\cC}))$.
The map $f$ is a retract of some map $g\in \Sp(\mcal{F}_{\cD})$. It follows that $F(f)$ is a retract of $F(g)$, so it is enough to show that $F(g)\in \Sp(\mcal{F}_{\cC})$.

The fact that $g:A\to B$ is in $\Sp(\mcal{F}_{\cD})$ means that the indexing category of both $A$ and $B$ is a cofinite directed set $\cT$ and that $g$ is a natural transformation such that the natural map
$$A_t \to B_t \times_{\lim_{s<t} B_s} \lim_{s<t} A_s$$
is in $\cF_{\cD},$ for every $t$ in $\cT$.

The indexing category of both $F(A)$ and $F(B)$ is a cofinite directed set $\cT$ and $F(g)$ is clearly a natural transformation. Let  $t\in \cT$. Applying $F$ to the map above and using the fact that $F$ commutes with finite limits, we see that
$$F(A_t) \to F(B_t \times_{\lim_{s<t} B_s} \lim_{s<t} A_s)\cong F(B_t) \times_{\lim_{s<t} F(B_s)} \lim_{s<t} F(A_s)$$
is in $\cF_{\cC}$.
Thus $F(g)\in \Sp(\mcal{F}_{\cC})$.
\end{proof}

Proposition \ref{p:RQFunc_big} has the following immediate corollary:
\begin{cor}
Let $F:\mcal{D}\to \mcal{C}$ be a weak right Quillen functor between two homotopically small pro admissible weak fibration categories. Assume that the  prolongation of  $F$ to $\Pro(\mcal{D})$ has a left adjoint:
$$L_F:\Pro(\mcal{C})\adj \Pro(\mcal{D}):\Pro(F).$$
Then the above adjunction is a Quillen pair, relative to the model structures defined in Theorem \ref{t:model_big}.
\end{cor}

We would now like to discuss sufficient conditions for the prolongation of  $F$ to have a left adjoint. For this we need to use the notion of a \emph{small functor}. The following is taken from \cite{DaLa} (see also the references there).

\begin{define}
Let $\cC$ be a (not necessarily small) category. A functor $F:\cC\to \Set$ is called \emph{small}, if it satisfies one of the following equivalent conditions:
\begin{enumerate}
\item $F$ is a (small) colimit of representables (that is, there exists a small category $\cI$ and a functor $X:\cI\to \cC^{\op}$ such that for every $c\in\cC$ we have $F(c)=\colim_{i\in \cI}\Hom_{\cC}(X(i),c)$).
\item There exists a small full subcategory $\cC_0\subseteq\cC$ such that $F$ is a left Kan extension of $F|_{\cC_0}$ along the natural inclusion.
\end{enumerate}
\end{define}

\begin{rem}
Let $\cC$ be a small category. Then clearly, every functor $F:\cC\to \Set$ is small.
\end{rem}

\begin{prop}
Let $\cC$ be a locally presentable category (see \cite{AR}). A functor $F:\cC\to \Set$ is small iff it is \emph{accessible} (that is,
iff it preserves $\kappa$-filtered colimits for some regular cardinal $\kappa$).
\end{prop}

Let $\cC$ be a category. We denote by $(\Set^{\cC^{\op}})_{sm}$ the category of small functors from $\cC^{\op}$ to $\Set$, with natural transformations between them. (It can be shown that this is indeed a category, that is, the $\Hom$ sets are small.) This category is always cocomplete. Furthermore, the Yoneda embedding $\cC\to (\Set^{\cC^{\op}})_{sm}$ exhibits $(\Set^{\cC^{\op}})_{sm}$ as the free completion of $\cC$ under colimits. If $\cC$ is complete, then $(\Set^{\cC^{\op}})_{sm}$ is also complete.

We can use $(\Set^{\cC^{\op}})_{sm}$ to construct the free completion of $\cC$ under any type of colimits. Namely, given a class $\mathbb{I}$ of small diagrams, the free completion of $\cC$ under colimits of shape $\mathbb{I}$ is given by the full subcategory of $(\Set^{\cC^{\op}})_{sm}$ obtained by closing the representable functors under $\mathbb{I}$-shaped colimits in $(\Set^{\cC^{\op}})_{sm}$. In particular, $\Ind(\cC)$ can be defined as the full subcategory of $(\Set^{\cC^{\op}})_{sm}$ obtained by closing the representable functors under filtered colimits. If $\cC$ has finite colimits, $\Ind(\cC)$ is the full subcategory of $(\Set^{\cC^{\op}})_{sm}$ spanned by the functors that commute with finite limits.

Dually, if $\cC$ has finite limits, $\Pro(\cC)$ can be defined as the full subcategory of $(\Set^{\cC})_{sm}^{\op}$ spanned by the functors that commute with finite limits. Any such functor can be represented as a limit in $(\Set^{\cC})_{sm}^{\op}$ of representables, over a small cofiltered indexing category. This diagram gives the usual representation of this functor as an object in $\Pro(\cC)$. Given an object in $\Pro(\cC)$ in the usual presentation, the corresponding small diagram is the limit in $(\Set^{\cC})_{sm}^{\op}$ of the corresponding diagram of representables.

We now come to our criterion for the prolongation of a functor to have a left adjoint.

\begin{lem}\label{l:l_adjoint}
Let $F:\mcal{D}\to \mcal{C}$ be a functor.
\begin{enumerate}
\item If $\cC$ and $\cD$ are locally presentable, and $F$ is accessible and commutes with finite limits, then prolongation of  $F$ to $\Pro(\cD)$ has a left adjoint.
\item If $F$ has a left adjoint $G:\cC \to \cD$, then prolongation of  $G$ to $\Pro(\cC)$ is left adjoint to the  prolongation of  $F$ to $\Pro(\cD)$.
\end{enumerate}
\end{lem}

\begin{proof}
\begin{enumerate}
\item Let $X  = \{X_j\}_\cJ \in \Pro(\cC) $. We need to show that the functor
$$\Hom_{\Pro(\cC)}(\{X_j\}_\cJ,\Pro(F)(-)):Pro(\cD)\to \Set$$
is representable. Let $j\in \cJ$. The functor $\Hom_{\cC}(X_j,F(-)):\cD\to \Set$ is small (since it is accessible), and commutes with finite limits. Let $Y^j$ denote the corresponding object in $\Pro(\cD)$. So $Y^j\in\Pro(\cD)$, and for every $d\in\cD$ we have
$$\Hom_{\cC}(X_j,F(d))=\Hom_{\Pro(\cD)}(Y^j,d).$$
Let $j\to j'$ be a morphism in $\cJ$. Then we have a morphism in $\cC$ $X_j\to X_{j'}$. This morphism induces a natural transformation (a morphism in $(\Set^{\cD})_{sm}$) of the form $\Hom_{\cC}(X_{j'},F(-))\to \Hom_{\cC}(X_j,F(-))$, which gives a morphism in $\Pro(\cD)$ of the form $Y^j\to Y^{j'}$. Thus we get a \emph{functor}
$j\to Y^j:\cJ\to \Pro(\cD)$. The category $\Pro(\cD)$ is complete (see Remark \ref{r:complete}), so we can define $Y:=\lim_{j\in \cJ}Y^j\in \Pro(\cD)$.

Let $\{Z_k\}_K\in \Pro(\cD)$. Then we have
$$\Hom_{\Pro(\cC)}(\{X_j\}_\cJ,\{F(Z_k)\}_K)=\lim_{k\in K}\colim_{j\in \cJ}\Hom_{\cC}(X_j,F(Z_k))=$$
$$=\lim_{k\in K}\colim_{j\in \cJ}\Hom_{\Pro(\cD)}(Y^j,Z_k).$$
It was shown in \cite{IsaL} that every simple object in $\Pro(\cD)$ is finitely copresentable (see \cite{AR}). Thus, for every $k\in K$ we have
$$\Hom_{\Pro(\cD)}(\lim_{j\in \cJ}Y^j,Z_k)=\colim_{j\in \cJ}\Hom_{\Pro(\cD)}(Y^j,Z_k).$$
To conclude, we have
$$\Hom_{\Pro(\cC)}(\{X_j\}_\cJ,\{F(Z_k)\}_K)=\lim_{k\in K}\Hom_{\Pro(\cD)}(\lim_{j\in \cJ}Y^j,Z_k)=$$
$$=\Hom_{\Pro(\cD)}(Y,\{Z_k\}_K).$$
\item Assume that $F$ has a left adjoint $G$. We need to prove that $\Pro(G)$ is left adjoint to $\Pro(F)$. Let  $c = \{c_i\}_\cI \in \Pro(\cC)$, and $d  = \{d_j\}_\cJ \in \Pro(\cD) $. We have
$$\Hom_{\Pro(\cD)}(\Pro(G)(c),d)  = \lim\limits_{j\in \cJ} \colim\limits_{i\in \cI} \Hom_\cD(G(c_i),d_j) =  $$
$$ = \lim\limits_{j\in \cJ} \colim\limits_{i\in \cI} \Hom_\cC(c_i,F(d_j)) = \Hom_{\Pro(\cC)}(c,\Pro(F)(d)). $$
\end{enumerate}
\end{proof}

\section{The Homotopy Category of a Weak Fibration Category}\label{s:cofib}
\subsection{The natural functor $i:\Ho(\cC) \to \Ho(\Pro(\cC))$}\label{s:i}
Let $(\mcal{C},\mcal{W})$ be a relative category (see Definition \ref{d:rel}). Recall that the homotopy category $\Ho(\cC)$ is obtained from $\cC$ by formally inverting all maps of  $\cW$.  In other words, $\Ho(\cC)$ has the same objects as $\cC$ and its maps are obtained  from the composable words in the maps of $\cC$ and the formal inverses of the maps of $\cW$, by means of the obvious equivalence relation. Composition of morphisms is defined by concatenation of words (for a more detailed definition see, for example, \cite[Definition 1.2.1]{Hov}).
There is a natural functor $\cC\to\Ho(\cC)$ which is the identity on objects. This functor is initial among functors from $\cC$ that invert all morphisms in $\cW$.

Now let $(\cC,\cW,\cF)$ be a homotopically small pro-admissible weak fibration category. Consider the natural full subcategory inclusion $\cC \to \Pro(\cC)$. This inclusion clearly transfers maps in $\cW$ to maps in $\Lw^{\cong}(\cW)$. Thus we have an induced functor $i:\Ho(\cC) \to \Ho(\Pro(\cC))$. The main purpose of this section is to prove that this last functor is fully faithful, that is, to prove the following:
\begin{prop}\label{p:ho_ff}
Let $(\cC,\cW,\cF)$ be a homotopically small pro-admissible weak fibration category. Then
for every $A,B\in \cC$, the natural functor $i:\Ho(\cC) \to \Ho(\Pro(\cC))$ induces  a  bijection
$$i: \Hom_{\Ho(\cC)}(A,B)\xrightarrow{\sim} \Hom_{\Ho(\Pro(\cC))}(A,B).$$
\end{prop}

The proof of Proposition \ref{p:ho_ff} will occupy the rest of this subsection.

Note that since every object $A \in \cC$ is weakly equivalent to its fibrant replacement in $\cC$ (and thus also in $\Pro(\cC)$), it is enough to prove Proportion \ref{p:ho_ff} for fibrant $A,B$.


Let $(\mcal{C},\mcal{W},\mcal{F})$ be any weak fibration category. We denote by $\cC_f$ the full subcategory of $\cC$ spanned by the fibrant objects. We first prove a proposition that  simplifies the description of the homotopy category $\Ho(\cC)$, using the weak fibration structure. For convenience of notation we define $\cF\cW$ to be the class of acyclic fibrations in $\cC$.

\begin{prop}\label{p:zigzag}
Let $A,B\in\cC$ be fibrant. Then every element of $\Hom_{\Ho(\cC)}(A,B)$ can be represented by  a zigzag of the form
$$A \xleftarrow{\cW\cF} X \to B.$$
\end{prop}

\begin{rem}
Note that we cannot prove Proposition \ref{p:zigzag} by using a similar result known for a Brown category of fibrant objects, since we do not know a-priori that the functor $\Ho(\cC_f)\to \Ho(\cC)$ is fully faithful.
\end{rem}

\begin{proof}
First we can choose a representative for that morphism by a zigzag of the form
$$A \xleftarrow{\cW} X_1 \to Y_1 \xleftarrow{\cW} X_2 \to Y_2 \xleftarrow{\cW} \cdots \xleftarrow{\cW} X_{n} \to B.$$
By choosing fibrant replacements for all the $Y_i$, we can assume that $Y_i$ is fibrant for every $i$.

Our second step is to show that we can replace this last zigzag by a zigzag of the form
$$A \xleftarrow{\cF\cW} X_1 \to Y_1 \xleftarrow{\cF\cW} X_2 \to Y_2 \xleftarrow{\cF\cW} \cdots \xleftarrow{\cF\cW} X_{n} \to B $$
with $Y_i$ fibrant for all $i$.

It is enough to assume that $n=1$, that is, that our zigzag has the form
$A \xleftarrow{\cW} X \to B.$
Consider the induced map $X \to A\times B$ and choose some factorization
$$X \xrightarrow{\cW}  \tilde{X} \xrightarrow{\cF} A \times B.$$
It is now not hard to verify that we get a diagram (here we use the fact that $B$ is fibrant)
$$\xymatrix{
A & \ar[l]^{\cF\cW}\tilde{X}\ar[r] & B. \\
  &    X\ar[ul]^{\cW} \ar[u]^{\cW}\ar[ur]  &
}$$

We are thus left with a zigzag of the form
$$A \xleftarrow{\cF\cW} X_1 \to Y_1 \xleftarrow{\cF\cW} X_2 \to Y_2 \xleftarrow{\cF\cW} \cdots \xleftarrow{\cF\cW} X_{n} \to B $$
with $Y_i$ fibrant for all $i$. Our last step is to show that we can shrink the length of this zigzag until $n=1$.
By using induction it is enough assume that our element is  represented by  a zigzag of the form
$$A \xleftarrow{\cF\cW} X_1 \to Y_1 \xleftarrow{\cF\cW}  X_2 \to  B$$
with $Y_1$ fibrant.

We denote ${X}:= X_1\times_{Y_1} X_2$ and get a commutative diagram
$$
\xymatrix{
A& \ar[l]_{\cF\cW} X\ar[d]^{\cF\cW}\ar[r] & X_2\ar[d]^{\cF\cW} & X_2 \ar[d]^{=}\ar[l]_{=} \ar[r]& B .\\
 & X_1 \ar[ul]^{\cF\cW} \ar[r]               & Y_1                          & X_2 \ar[l]_{\cF\cW} \ar[ur]}
$$
Thus our element is represented by $A \xleftarrow{\cF\cW} X\to B$.
\end{proof}

Let us now assume that $\cC$ is homotopically small and pro-admissible. Then by Theorem \ref{t:model_big} there is an induced model structure on $\Pro(\cC)$.
From now until the end of this section we fix two fibrant objects $A,B \in \cC$.

Our construction of the model structure on $\Pro(\cC)$ enables us to compute explicitly a cofibrant replacement for $A$ in $\Pro(\cC)$. The factorizations into a cofibration followed by a weak equivalence were constructed in Section \ref{s:model} using Proposition \ref{p:factor_gen_big}.

Let $\cF_{A},\barr{\cF_{A}}$ be the categories of factorizations $\cF_{\phi\to A},\barr{\cF_{\phi\to A}}$ for the unique map $\phi\to A$ and the factorization category $(\cC,\cC,\cF\cW)$ (see Definition \ref{d:factorization_cat}). An object in either of these categories is an acyclic fibration
$\tilde{A} \xrightarrow{\cF\cW} A$, and morphisms are commutative diagrams of the form
$$\xymatrix{
\tilde{A} \ar[rr]\ar[dr]_{\cF\cW}& & \barr{A}\ar[dl]^{\cF\cW}, \\
  & A&
}$$
where in $\barr{\cF_A}$ the horizontal map is arbitrary and in ${\cF_A}$ it is an acyclic fibration.
There are obvious forgetful functors
$$\barr{\cF_A}\xrightarrow{\barr{U}}\cC,\:\:\cF_A\xrightarrow{U}\cC.$$
By the proof of Proposition \ref{p:factor_gen_0}, there exists a small cofinite directed set $\cA_A$ and a functor $j:\cA_A\to \cF_A$, that is pre-cofinal relative to $\barr{\cF_A}$.
By Proposition \ref{p:lw_factor_gen} any such $j$ and $\cA_A$ give rise to a cofibrant replacement of $A$ by
$$H_A:\cA_A\xrightarrow{j} \cF_A\xrightarrow{U}\cC.$$

\begin{define}\label{d:R}
Let $g$ be an element in $\Hom_{\Ho(\Pro(\cC))}(A,B)$.

Let $g':H_A\to B$ denote the composition of $g$ and the image of $H_A\to A$ in $\Ho(\Pro(\cC))$
$$\xymatrix{H_A\ar[r]^{\sim}\ar[dr]_{g'} & A\ar[d]^g\\
                    & B.}$$

Since $B$ is fibrant in $\cC$ it is also fibrant in $\Pro(\cC)$.
Since $\Pro(\cC)$ is a model category we have (see for example \cite[Chapter 1]{Hov})
$$\Ho(\Pro(\cC))(H_A,B)\cong\Pro(\cC)(H_A,B)/\sim,$$
where $\sim$ is the homotopy relation in $\Pro(\cC)$.
Thus there exists a map $\gamma:H_A\to B$ in $\Pro(\cC)$ such that $[\gamma]=g'$. It follows that $g:A\to B$  can be represented by a zigzag of the form
$$A \xleftarrow{\mathbf{W}} H_A \xrightarrow{\gamma} B.$$

By the definition of morphisms in $\Pro(\cC)$ there exists an object $a\in \cA_A$ and a morphism $H_A(a)\to B$ representing $\gamma$. By the definition of $H_A$ we have a commutative diagram
$$\xymatrix{
A & H_A \ar[l]_{\mathbf{W}}\ar[r]^{\gamma}\ar[d]  & B. \\
  & H_A(a)\ar[ur]_{}\ar[ul]^{\cF\cW\ni j(a)}&
}$$


We define $R(g)$ to be the element in $\Hom_{\Ho(\cC)}(A,B)$ represented by the zigzag
$A \xleftarrow{\cF\cW\ni j(a)} H_A(a) \xrightarrow{{}} B.$
\end{define}

We now wish to prove that the element $R(g) \in \Hom_{\Ho(\cC)}(A,B)$ is well defined (i.e. it does not depend on the choices of $\gamma$ or $H_A(a) \xrightarrow{{}} B$). For this we need the following:

\begin{define}
Let $B\in\cC_f$. A \emph{path object} for $B$ in $\cC$ is a factorization in $\cC$ of the diagonal map $\Delta_B:B\to B\times B$ into a weak equivalence followed by a fibration: $B\to B^I\to B\times B$.
(Note that we are not assuming any simplicial structure, $B^I$ is just a suggestive notation.)
\end{define}

Let $B\in\cC_f$, and let $B\xrightarrow{j} B^I\xrightarrow{(\pi_0,\pi_1)} B\times B$ be a path object for $B$ in $\cC$. We will sometimes denote this path object simply by $B^I$ suppressing the maps $j,\pi_0,\pi_1$. It is a standard verification  that  $\pi_0,\pi_1$ are acyclic fibrations. It is also a standard verification that in $\Ho(\cC)$ we have $[\pi_0] = [\pi_1]=[j]^{-1}$.

\begin{lem}
The element $R(g) \in \Hom_{\Ho(\cC)}(A,B)$ is well-defined (i.e. it does not depend on the choices of $\gamma$ or $H_A(a) \xrightarrow{{}} B$).

\end{lem}
\begin{proof}
First let us assume that we have chosen some $H_A(a') \xrightarrow{{}} B$ instead of $H_A(a) \xrightarrow{{}} B$. The morphisms $H_A(a) \xrightarrow{{}} B$ and $H_A(a') \xrightarrow{{}} B$ represent the same element in $\colim_{a\in \cA_A}\Hom_{\cC}(H_A(a),B)$. It follows that there exists $a''$ in $\cA_A$ such that $a''\geq a, a'$ and the following diagram commutes:
$$\xymatrix{ &   H_A(a)\ar[dr]^{{}}     & \\
H_A(a'')\ar[ur]^{H_A(a''\to a)}\ar[dr]_{H_A(a''\to a')} & & B. \\
             &   H_A(a')\ar[ur]_{{}}   & }$$

We define $H_A(a'')\to B$ to be the morphism given in the diagram above. We thus have a commutative diagram
$$\xymatrix{
  & H_A(a)\ar[dl]_{\cF\cW\ni j(a)} \ar[dr] &                       \\
A & H_A(a'') \ar[l]^{j(a'')}_{\cF\cW}\ar[u] \ar[d]\ar[r]& B,\\
  & H_A(a')\ar[ul]^{\cF\cW\ni j(a')} \ar[ur]&
}$$
and indeed $A \xleftarrow{\cF\cW\ni j(a)} H_A(a) \xrightarrow{{}} B$ and $A \xleftarrow{\cF\cW\ni j(a')} H_A(a') \xrightarrow{{}} B$ represent the same element in $\Hom_{\Ho(\cC)}(A,B)$.

Now assume we have chosen some other map $\gamma':H_A\to B$ in $\Pro(\cC)$ such that $[\gamma']=g'$.

Let $B\xrightarrow{j} B^I\xrightarrow{(\pi_0,\pi_1)} B\times B$ be a path object for $B$ in $\cC$.
Since $B$ is fibrant, $H_A$ is cofibrant and we have that
$$g'=[\gamma]=[\gamma']\in \Pro(\cC)(H_A,B)/\sim,$$
there exists some  homotopy $H:H_A \to B^I$ such that
$$\gamma = \pi_0\circ H ,\:\:\gamma' = \pi_1\circ H.$$

We can choose some object $a\in \cA_A$ and a morphism $G:H_A(a)\to B^I$ representing $H$. By the definition of $H_A$ we have a commutative diagram
$$\xymatrix{
A & H_A \ar[l]\ar[r]^{H}\ar[d]  & B^I. \\
  & H_A(a)\ar[ur]_{G}\ar[ul]^{\cF\cW\ni j(a)}&
}$$
It is thus enough to show that
$$A \xleftarrow{\cF\cW\ni j(a)} H_A(a) \xrightarrow{\pi_0\circ G} B,\:\: A \xleftarrow{\cF\cW\ni j(a)} H_A(a) \xrightarrow{\pi_1\circ G} B$$
represent the same element in $\Hom_{\Ho(\cC)}(A,B),$ which is true since $[\pi_0] = [\pi_1]$ in $\Ho(\cC)$.
\end{proof}

We thus have a well-defined map
$$R: \Hom_{\Ho(\Pro(\cC))}(A,B)\to \Hom_{\Ho(\cC)}(A,B).$$
In order to finish the proof of Proposition \ref{p:ho_ff} it is enough to show that $R$ is a two-sided inverse to the map induced by the natural functor
$$i:\Hom_{\Ho(\cC)}(A,B)\to \Hom_{\Ho(\Pro(\cC))}(A,B).$$
This is done in the following two lemmas where we show that $R$ is a right inverse to $i$ and that $R$ is surjective.
\begin{lem}
We have $i\circ R=\id$.
\end{lem}
\begin{proof}
Let $g$ be an element in $\Hom_{\Ho(\Pro(\cC))}(A,B)$, represented by a zigzag of the form
$$A \xleftarrow{\mathbf{W}} H_A \xrightarrow{\gamma} B,$$
and let $H_A(a)\to B$ be a morphism representing $\gamma$, as in Definition \ref{d:R}.
Then we have a commutative diagram
$$\xymatrix{
A & H_A \ar[l]_{\mathbf{W}}\ar[r]^{\gamma}\ar[d]  & B. \\
  & H_A(a)\ar[ur]_{}\ar[ul]^{\cF\cW\ni j(a)}&
}$$
By definition $R(g)$ is the element in $\Hom_{\Ho(\cC)}(A,B)$ represented by the following zigzag:
$A \xleftarrow{\cF\cW\ni j(a)} H_A(a) \xrightarrow{{}} B,$
and $i(R(g))$ is the element in $\Hom_{\Ho(\Pro(\cC))}(A,B)$ represented by the same zigzag. The diagram above shows that
$i(R(g))$ and $g$ represent the same element in $\Hom_{\Ho(\Pro(\cC))}(A,B)$.
\end{proof}
\begin{lem}\label{l:R_surj}
The map $R$ is surjective.
\end{lem}
\begin{proof}
Consider an element in $\Hom_{\Ho(\cC)}(A,B)$ represented by  a zigzag of the form $$A \xleftarrow{\cW\cF} X \xrightarrow{f} B$$
(see Proposition \ref{p:zigzag}).
The category $\cF_A$ is semi-cofiltered (see Definition \ref{d:cofiltered}) and $j:\cA_A\to \cF_A$ is a pre-cofinal functor relative to $\barr{\cF_A}$, so by Lemma \ref{l:onto is cofiltered} the functor $j:\cA_A\to\barr{\cF_A}$ is semi-cofinal.

The arrow  $i:X \xrightarrow{\cW\cF} A$ is an object in $\barr{\cF_A}$.
The functor $j:\cA_A\to\barr{\cF_A}$ is semi-cofinal (see Definition \ref{d:cofinal}) so the over-category $j_{/i}$ is nonempty. Let $j(a)\to i$ be an object in $j_{/i}$.
Then we have a commutative diagram
$$\xymatrix{H_A(a)\ar[r]\ar[dr]_{\cF\cW\ni j(a)} & X\ar[d]_i^{\cF\cW}\\
                                    & A.}$$
The above commutative triangle induces the following commutative diagram:
$$\xymatrix{ & H_A\ar[d]\ar[dl]_{\mathbf{W}}\ar[dr]^{\gamma} & \\
A & H_A(a)\ar[l]_{\cF\cW}\ar[r]\ar[d] & B \\
  & X\ar[ur]_{f}\ar[ul]_i^{\cF\cW}&
}$$
Let $g$ be the object in $\Hom_{\Ho(\Pro(\cC))}(A,B)$ represented by the following zigzag:
$$A \xleftarrow{\mathbf{W}} H_A \xrightarrow{\gamma} B.$$
Then by the definition of $R$ (see Definition \ref{d:R}) we have that
$$R(g) =[A \xleftarrow{\cF\cW} H_A(a) \xrightarrow{{}} B]=[A \xleftarrow{\cW\cF} X \xrightarrow{f} B]$$
in $\Hom_{\Ho(\cC)}(A,B)$.
\end{proof}

This finishes our proof of Proposition \ref{p:ho_ff}.

We now present a small application of Proposition \ref{p:ho_ff} that will be used later. Let $\cC$ be a weak fibration category, let $A,B\in\cC_f$, and let $B^I$ be a path object for $B$ in $\cC$.

\begin{define}\label{d:homotopy}
Let $f,g:A\to B$ be two maps in $\cC_f$. We say that $f$ is \emph{strictly homotopic} to $g$ (relative to the path object $B^I$), if there exists a map $H:A\to B^I$ such that $\pi_0 H=f$ and $\pi_{1} H=g$. We denote by $\sim_{B^I}$ the equivalence relation on $\cC_f(A,B)$ generated by strict homotopy. In other words, two maps $f,g:A\to B$ in $\cC_f$ are called \emph{homotopic} (relative to $B^I$), denoted $f\sim_{B^I} g$, if they can be related by a zigzag of strict homotopies (relative to $B^I$). If the choice of $B^I$ is clear from the context, we will write  $\sim $ instead of $\sim_{B^I} $.
\end{define}

The homotopy relation is an equivalence relation on $\cC_f(A,B) = \cC(A,B)$, for every $A,B\in \cC_f$.
We denote the set of equivalence  classes by   ${\cC}(A,B)/\sim_{B^I}$.

Now assume that $\cC$ is homotopically small and pro-admissible, so that we have an induced model structure on $\Pro(\cC)$. Since $H_A\in\Pro(\cC)$ is cofibrant and $B\in\Pro(\cC)$ is fibrant,  we get a coequalizer diagram
$$\Hom_{\Pro(\cC)}(H_A,B^I)  \rightrightarrows \Hom_{\Pro(\cC)}(H_A,B) \to \Hom_{\Ho(\Pro(\cC))}(A,B).$$
By the definition of morphism sets  in $\Pro(\cC)$ and using Proposition \ref{p:ho_ff} and the fact that direct limits commute with each other, we get
$$\Hom_{\Ho(\cC)}(A,B)\cong\Hom_{\Ho(\Pro(\cC))}(A,B) \cong \colim \limits_{a\in \mcal{A}_A^{\op}} \Hom_{\cC}(H_A(a),B)/\sim_{B^I}.$$

\subsection{Functorial path objects}\label{s:Ho}
In trying to understand morphisms in $\Ho(\cC)$ and $\Ho(\Pro(\cC))$ we came across the construction of a path object $B^I$ for a
fibrant object $B\in \cC$. In the proof of Proposition \ref{p:ho_ff} we only needed to choose such an arbitrary path object. However, in many applications there is in fact a functorial construction of such path objects.
In this case morphisms in $\Ho(\cC)$ can be given a nice formula.
Our approach is influenced from both \cite{Bro} and \cite{AM}.
The results obtained in this section will be used later in Section \ref{s:etale_big}, to connect the theory presented here with the approach taken by \cite{AM}.

We begin with a definition:
\begin{define}\label{d:path_object}
Let $(\mcal{C},\mcal{W},\mcal{F})$ be a weak fibration category. Recall that $\cC_f$ denotes the full subcategory of $\cC$ spanned by the fibrant objects.
A functorial path object in $\cC$ is a functor $P:\cC_f\to \cC^{[2]}$, where $[2]$ is the ordinal $\{0,1,2\}$, such that:
\begin{enumerate}
    \item For every $C\in\cC_f$, $P(C)(0)\to P(C)(1)\to P(C)(2)$ is a path object for $C$ in $\cC$.
    \item For every map $f:C\to D$ in $\cC_f$, we have $P(f)(0)=f:C\to D$, and $P(f)(2)=f\times f:C\times C\to D\times D$.
\end{enumerate}
\end{define}

From now until the end of this section we fix a homotopically small pro-admissible weak fibration category $(\mcal{C},\mcal{W},\mcal{F})$. We further assume that $\cC$ has an initial object $\phi$.

Let $A,B$ be objects in $\cC$. By Lemma \ref{l:hom small mor}, we can choose an essentially small dense full sub weak fibration category $\cC_s\subseteq\cC$ containing $A$ and $\phi$. We denote by $\cF^s_{A}$ and $\barr{\cF^s_{A}}$ the categories of factorizations $\cF_{\phi\to A}$ and $\barr{\cF_{\phi\to A}}$ respectively, defined in Definition \ref{d:factorization_cat} for the unique map $\phi\to A$ and the factorization category $(\cC_s,\cC_s,\cF_s\cap\cW_s)$. By the proof of Proposition \ref{p:factor_gen_0}, there exists a small cofinite directed set $\cA_A$, and a pre-cofinal functor $j:\cA_A\to \cF^s_A$.
By Proposition \ref{p:lw_factor_gen} and Lemma \ref{l:dense} any such $j$ and $\cA_A$ give rise to a cofibrant replacement of $A$ by
$$H_A:\cA_A\xrightarrow{j} \cF^s_A\xrightarrow{U}\cC.$$
At the end of Section \ref{s:i} we have shown that if $A,B\in \cC$ are fibrant, then
$$\Hom_{\Ho(\cC)}(A,B)\cong\Hom_{\Ho(\Pro(\cC))}(A,B) \cong\colim \limits_{a\in \mcal{A}_A^{\op}} \Hom_{\cC}(H_A(a),B)/\sim_{B^I} .$$

This formula is already quite nice, but it is still somewhat complicated as $\mcal{A}_A$ is a complicated category. Now, assume there exists
a functorial path object $P$ in $\cC_s$. Then as we will show, this functorial path object induces a much simpler cofiltered category, which we denote by $\pi((\cC_s)_{/A})_{fw}$. (More details explaining this notation will come later, see Definition \ref{d:over}.) We further show that the functor
$$F_{A,B}:\cA_A\to{\Set} $$
given on objects by
$$a\mapsto  \Hom_{\cC}(H_A(a),B)/\sim_{P}$$
decomposes as
$$\mcal{A} \xrightarrow{\rho} \pi((\cC_s)_{/A})_{fw} \xrightarrow{G_{A,B}}{\Set},$$
with $\rho:\mcal{A} \to \pi((\cC_s)_{/A})_{fw}$ cofinal. Thus we get

\begin{prop}\label{p:Ho}
Let $(\mcal{C},\mcal{W},\mcal{F})$ be a homotopically small pro-admissible weak fibration category, and let $A,B$ be fibrant objects in $\cC$. Choose a small dense full sub weak fibration category $\cC_s\subseteq\cC$ containing $A$ and $\phi$, and assume that $\cC_s$ has a functorial path object $P$. Then $P$ induces a cofiltered category $\pi((\cC_s)_{/A})_{fw}$ (see Definition \ref{d:over}) such that there exists an isomorphism
$$\Hom_{\Ho(\cC)}(A,B) \cong \colim \limits_{(A'\to  A)\in \pi((\cC_s)_{/A})_{fw}^{\op}}\Hom_{\cC}(A',B)/\sim_{P}.$$
\end{prop}

\begin{rem}\
\begin{enumerate}
\item Proposition \ref{p:Ho} gives a much simpler formula for the Hom-sets in $\Ho(\cC)$ then the one given at the end of Section \ref{s:i}. The main reason for this is that $\pi((\cC_s)_{/A})_{fw}$ is a much simpler cofiltered category than $\mcal{A}_A^{\op}$. To see this more clearly, note that both categories are constructed from $((\cC_s)_{/A})_{fw}\cong \cF_A^s$, one through passing to the homotopy category, and one through constructing a pre-cofinal functor $j:\cA_A\to \cF_A^s$.
   By the construction of $\cA_A$ given in Proposition \ref{p:factor_gen_0} we have at level zero $\cA^0_A=\Ob(\cF_A^s)$, and the map $j$ is just the identity on objects. So at level zero alone, this map is onto on objects. At the next levels of $\cA_A$ many repetitions of elements from $\Ob(\cF_A^s)$ occur. It can happen that an object of $\cF_A^s$ will have infinitely many preimages in $\cA_A$ under $j$. On the other hand the objects of $\pi((\cC_s)_{/A})_{fw}$ are the same as the objects of $((\cC_s)_{/A})_{fw}$, the difference is that homotopic maps in $((\cC_s)_{/A})_{fw}$ are identified. This particular simple form is used in Example \ref{e:Verdier} to give a simple proof of Verdier's hypercovering theorem \cite{SGA4-I}.
\item Proposition \ref{p:Ho} is reminiscent of, but not identical to, Brown's famous result describing the morphism sets in the homotopy category of a category of fibrant objects (see \cite[Theorem 2.1]{Bro}). First of all, Brown ignores set theoretical issues, so he does not need to restrict to a small dense subcategory $\cC_s$ instead of $\cC$. Second, since Brown does not assume the existence of a functorial path object, he uses a slightly different category than our $\pi((\cC_s)_{/A})_{fw}$. The method of proof is also different. While Brown's proof is based on the Gabriel-Zisman localization theory for a category satisfying a calculus of right fractions (see \cite{GZ}), our proof is based on the induced model structure on $\Pro(\cC)$. Our proof also reveals a somewhat conceptual explanation for the appearance of the colimit in the formula obtained, namely, it comes from the cofibrant replacement $H_A$ to $A$.
\end{enumerate}
\end{rem}

The proof of Proposition \ref{p:Ho} will occupy the rest of this section.

We will only be using the functorial path object $P$.
Thus, for example, when we say that two maps $f,g:C\to D$ in $(\cC_s)_f$ are (strictly) homotopic, we will mean that they are (strictly) homotopic
relative to $P(D)$ (see Definition \ref{d:homotopy}). For every $C\in (\cC_s)_f$ we write
$$C\xrightarrow{j} C^I\xrightarrow{(\pi_0,\pi_1)} C\times C$$
instead of
$$P(C)(0)\to P(C)(1)\to P(C)(2).$$

\begin{lem}
Let $f,g:B\to C$, $u:A\to B$ and $v:C\to D$ be maps in $(\cC_s)_f$, and suppose that $f\sim g$. Then $fu\sim gu$ and $vf\sim vg$.
\end{lem}
\begin{proof}
We can assume that $f$ is strictly homotopic to $g$. Then the proof is a rather straightforward verification. Note, however, that we need to use the functoriality of our path object in the proof of the second equivalence.
\end{proof}

\begin{cor}
There exists a \emph{category} $\pi (\cC_s)_f$ with:
\begin{enumerate}
\item $\Ob(\pi (\cC_s)_f)=\Ob((\cC_s)_f)$.
\item For every $C,D\in \Ob((\cC_s)_f)$, $\pi(\cC_s)_f(C,D):=(\cC_s)_f(C,D)/\sim.$
\item Composition and identities in $\pi(\cC_s)_f$ are induced from those in $(\cC_s)_f$.
\end{enumerate}
\end{cor}
%

Recall that an object $C\in\cC_s$ is called fibrant or contractible if the unique map $C\to *$ is a fibration or a weak equivalence, respectively.

\begin{define}\label{d:pi_C}
\begin{enumerate}
\item Let ${\pi(\cC_s)_{fw}}$ denote the full subcategory of $\pi(\cC_s)_f$ spanned by the (fibrant and) contractible objects.
\item Let $\widehat{\pi(\cC_s)_{fw}}$ denote the category with:
\begin{enumerate}
\item $\Ob(\widehat{\pi(\cC_s)_{fw}}):=\Ob(\pi(\cC_s)_{fw})$.
\item For every $C,D\in \Ob(\widehat{\pi(\cC_s)_{fw}})$, $$\widehat{\pi(\cC_s)_{fw}}(C,D):=\{[h]\in\pi(\cC_s)_{f}(C,D)|h\in \cF\cap\cW\}.$$
\item Composition and identities in $\widehat{\pi(\cC_s)_{fw}}$ are the same as in $\pi(\cC_s)_{f}$.
\end{enumerate}
\end{enumerate}
\end{define}

\begin{lem}\label{l:HC_cofiltered}
The categories $\pi(\cC_s)_{fw}$ and $\widehat{\pi(\cC_s)_{fw}}$ are cofiltered.
\end{lem}
\begin{proof}
We show this for $\widehat{\pi(\cC_s)_{fw}}$, and the proof for $\pi(\cC_s)_{fw}$ is identical.

Let $C,D$ be fibrant and contractible objects in $\cC_s$. We must show that there exists a fibrant and contractible object $E\in \cC_s$, and morphisms $[l]\in\pi(\cC_s)_f(E,C),[k]\in\pi(\cC_s)_f(E,D)$, such that $l,k\in \cF\cap\cW$. We can simply take $E$ to be the following pullback in $(\cC_s)$:
\[
\xymatrix{E\ar[d]^k\ar[r]^l & C\ar[d]\\
D\ar[r] & \ast.}
\]

Let $C,D$ be fibrant and contractible objects in $\cC_s$, and let $[l],[k]\in\pi(\cC_s)_f(C,D)$, such that $l,k\in \cF\cap\cW$. We must show that there exists a fibrant and contractible object $E\in \cC_s$ and a morphism $[t]\in\pi(\cC_s)_f(E,C)$ such that $t\in \cF\cap\cW,[l][t]=[k][t]$.

Note that, since $D$ is fibrant and contractible, $D\times D$ is also fibrant and contractible, since we have a pullback square
\[
\xymatrix{D\times D\ar[d]^{\pi_0}\ar[r]^{\pi_1} & D\ar[d]\\
D\ar[r] & \ast.}
\]
It follows that the diagonal map $D\to D\times D$ is a weak equivalence. By the two out of three property in $\cC_s$, we get that $D^I\xrightarrow{(\pi_0,\pi_1)} D\times D$ is a weak equivalence (and a fibration).
We can thus take $E$ to be the following pullback in $\cC_s$:
\[
\xymatrix{E\ar[d]^H\ar[r]^t & C\ar[d]^{(l,k)}\\
D^I\ar[r]^{(\pi_0,\pi_1)} & D\times D.}
\]
Clearly $H$ is a strict homotopy from $lt$ to $kt$.
\end{proof}

Obviously $\widehat{\pi(\cC_s)_{fw}}$ is a subcategory of $\pi(\cC_s)_{fw}$, that contains all the objects. Let $i:\widehat{\pi(\cC_s)_{fw}}\to \pi(\cC_s)_{fw}$ denote the inclusion functor.

\begin{lem}\label{l:HC_cofinal}
The functor $i:\widehat{\pi(\cC_s)_{fw}}\to \pi(\cC_s)_{fw}$ is cofinal.
\end{lem}
\begin{proof}
Let $Q\in \cC_s$ be a fibrant and contractible object. By Definition \ref{d:cofinal} it is enough to show that the over-category $i_{/Q}$ is nonempty and connected. It is nonempty since it contains $[\id_Q]$. Let $[f]\in \pi(\cC_s)_{fw}(P,Q)$ and $[g]\in \pi(\cC_s)_{fw}(R,Q)$. It is enough to show that there exists $[l]\in \widehat{\pi(\cC_s)_{fw}}(\cT,P)$ and $[k]\in \widehat{\pi(\cC_s)_{fw}}(\cT,R)$ such that $[f][l]=[g][k]$. As in the proof of Lemma \ref{l:HC_cofiltered}, it can be shown that $Q^I\xrightarrow{(\pi_0,\pi_1)} Q\times Q$ is an acyclic fibration. Also, by considering the pullback square
\[
\xymatrix{P\times R\ar[d]^{\pi_0}\ar[r]^{\pi_1} & R\ar[d]\\
P\ar[r] & \ast,}
\]
we see that $\pi_0:P\times R\to P,\pi_1:P\times R\to R$ are acyclic fibrations.
We can thus take $\cT$ to be the following pullback in $\cC_s$:
\[
\xymatrix{\cT\ar[d]^H\ar[r]^{(l,k)} & P\times R\ar[d]^{f\times g}\\
Q^I\ar[r]^{(\pi_0,\pi_1)} & Q\times Q.}
\]
Clearly $H$ is a strict homotopy from $fl$ to $gk$.
\end{proof}

Note that the over-category $(\cC_s)_{/A}$ is a weak fibration category, where a map in $(\cC_s)_{/A}$ is defined to be a fibration or a weak equivalence if it is so under the forgetful functor $U:(\cC_s)_{/A}\to \cC_s$. Since the forgetful functor commutes with pullbacks, the axioms are easily verified.

\begin{define}\label{d:over}
We define a functorial path object in $(\cC_s)_{/A}$. Let $(B \xrightarrow{g} A)\in ((\cC_s)_{/A})_f$. We define $P(B\xrightarrow{g} A)\in ((\cC_s)_{/A})^{[2]}$ by
$$\xymatrix{B\ar[r]^{(g,j)}\ar[dr]^g & A\times_{A^I} B^I \ar[r]^{(\pi_0,\pi_1)}\ar[d] & B\times_A B,\ar[dl]\\
                & A & }$$
where $\pi_i$ is defined to be the composition $A\times_{A^I} B^I\xrightarrow{} B^I \xrightarrow{\pi_i} B$.

Using this functorial path object we can define the categories $\pi((\cC_s)_{/A})_{fw}$ and $\widehat{\pi((\cC_s)_{/A})_{fw}}$ as in Definition \ref{d:pi_C}.
\end{define}
As we have shown, the categories $\pi((\cC_s)_{/A})_{fw}$ and $\widehat{\pi((\cC_s)_{/A})_{fw}}$ are cofiltered, and the natural subcategory inclusion $i:\widehat{\pi((\cC_s)_{/A})_{fw}}\hookrightarrow \pi((\cC_s)_{/A})_{fw}$ is cofinal.

We denote by ${((\cC_s)_{/A})_{fw}}$ the full subcategory of $(\cC_s)_{/A}$ spanned by the fibrant and contractible objects.
Let $\widehat{((\cC_s)_{/A})_{fw}}$ denote the category with the same objects as ${((\cC_s)_{/A})_{fw}}$ and for every $C,D\in \Ob(\widehat{((\cC_s)_{/A})_{fw}})$, $$\widehat{((\cC_s)_{/A})_{fw}}(C,D):=\{h\in((\cC_s)_{/A})(C,D)|h\in \cF\cap\cW\}.$$
Clearly there is a natural map $\widehat{((\cC_s)_{/A})_{fw}}\to\widehat{\pi((\cC_s)_{/A})_{fw}}$. Since this map is onto objects and morphisms, it is clearly pre-cofinal.

Note that the categories $\widehat{((\cC_s)_{/A})_{fw}}$ and ${((\cC_s)_{/A})_{fw}}$ defined above are just the categories $\cF^s_{A}$ and $\barr{\cF^s_{A}}$ respectively, considered after Definition \ref{d:path_object}. Recall also that we have a small cofinite directed set $\cA_A$, and a pre-cofinal functor $j:\cA_A\to \widehat{((\cC_s)_{/A})_{fw}}$. By Lemma \ref{l:compose_onto_big} the composition $r:\cA_A\to \widehat{((\cC_s)_{/A})_{fw}}\to \widehat{\pi((\cC_s)_{/A})_{fw}}$ is pre-cofinal. Since the categories $\cA_A$ and $\widehat{\pi((\cC_s)_{/A})_{fw}}$ are cofiltered, it follows from Lemma \ref{l:onto is cofiltered} that  $r$ is  cofinal.
The inclusion functor $i:\widehat{\pi((\cC_s)_{/A})_{fw}}\to \pi((\cC_s)_{/A})_{fw}$ is also cofinal by Lemma \ref{l:HC_cofinal}, so it follows that the composition $\rho:=ir:\cA_A \to \pi((\cC_s)_{/A})_{fw}$ is cofinal.

Recall that we have a cofibrant replacement of $A$ given by
$$H_A:\cA_A\xrightarrow{j} \widehat{((\cC_s)_{/A})_{fw}}\xrightarrow{U}\cC.$$
Clearly the functor
$$F_{A,B}:\cA_A\to{\Set} $$
given on objects by
$$a\mapsto  \Hom_{\cC_s}(H_A(a),B)/\sim_{B^I}$$
decomposes as
$$\mcal{A} \xrightarrow{\rho} \pi((\cC_s)_{/A})_{fw} \xrightarrow{G_{A,B}} {\Set},$$
where $G_{A,B}$ is defined by
$$G_{A,B}(A'\to  A):=\Hom_{\cC_s}(A',B)/\sim_{B^I}=\Hom_{\pi(\cC_s)_f}(A',B).$$
This finishes the proof of Proposition \ref{p:Ho}.

\section{Simplicial Presheaves as a Weak Fibration Category}\label{s:SPS}

In this section we present the application of the theory presented so far which was our original motivation for its development.

Let $\cC=(\cC,\tau)$ be a small Grothendieck site, and let $\SPS(\cC):= {\cS}^{\cC^{\op}}$ denote the category of simplicial presheaves on $\cC$.
In \cite{Jar}, Jardine defines the notions of combinatorial weak equivalences  and local fibrations in $\SPS(\cC)$. In the same paper Jardine defines a model structure on $\SPS(\cC)$. However, the local fibrations are \textbf{not} the fibrations in this model structure. Jardine (in \cite{Jar}) proves almost all that is needed to show that combinatorial weak equivalences and local fibrations give rise to a weak fibration category structure on $\SPS(\cC)$ (without considering this notion directly). In this section we complete the proof of this fact, and also review some of the definitions and proofs presented in \cite{Jar}, for the sake of completeness. We follow the common convention in the field, and call Jardine's combinatorial weak equivalences local weak equivalences (see \cite{DuIs,Jar1}).

We first present Jardine's original definition for a local weak equivalence.
Let $X$ be a simplicial set and let $1 \leq m$. Consider the set
$$\pi_m(X): =\bigsqcup \limits_{x\in X_0} \pi_m(X, x).$$
There is a canonical map $\pi_m(X) \to X_0$, and this map is functorial in $X$.
Thus, given a simplicial presheaf $X\in \SPS(\cC)$, we can define a map of presheaves $\pi_m(X) \to X_0$, for every $m \geq 1$.

Any simplicial presheaf map $f : X \to Y$ induces a presheaf morphism $\pi_0(X) \to \pi_0(Y)$, and a commutative diagram of presheaves
$$\xymatrix{
\pi_m(X) \ar[r]\ar[d] & \pi_m(Y) \ar[d]\\
X_0 \ar[r] &  Y_0.}
$$
For every $m \geq 0$, write $\widetilde{\pi_m}X$ for the sheaf associated to the presheaf $\pi_mX$. Now we can give the definition of a local weak equivalence.
\begin{define}
A map $f : X \to  Y$ of simplicial presheaves is called a \emph{local weak equivalence} if the following conditions hold:
\begin{enumerate}
\item The map $\widetilde{\pi_0}X \to \widetilde{\pi_0}Y$ is an isomorphism of sheaves.
\item The diagram
$$\xymatrix{
\widetilde{\pi}_m(X) \ar[r]\ar[d] & \widetilde{\pi}_m(Y) \ar[d]\\
\widetilde{X}_0 \ar[r] &  \widetilde{Y}_0}
$$
is a pullback diagram in $\Sh(\cC)$, for every $1\leq m $.
\end{enumerate}
We denote the class of local weak equivalences in $\SPS(\cC)$ by $\cW$.
\end{define}

\begin{rem}
Note that every levelwise weak equivalence in $\SPS(\cC)$ is a local weak equivalence, since
the two conditions are satisfied at the presheaf level, and hence also at the sheaf level. (Note that sheafification commutes with pullbacks.)
\end{rem}

\begin{define}
Let $f:A\to B$ be a map of simplical sets, and let $g:X\to Y$ be a map in $\SPS(\cC)$.
We say that $g$ has the \emph{local} right lifting property with respect to $f$, if for every $U\in \cC$, and every square of the form
$$\xymatrix{
A \ar[r] \ar[d]^f & X(U)\ar[d]^{g_U} \\
B \ar[r] & Y(U)
,}$$
there exists a covering sieve $R$ of $U$, such that for every $V\to U$ in $R$ there is a lift:
$$\xymatrix{
A \ar[r] \ar[d] & X(U) \ar[r]\ar[d] & X(V)\ar[d] \\
B \ar[r] \ar@{..>}[rru] & Y(U) \ar[r]   &  Y(V)
.}$$
In this case we shall denote $f \perp^{l} g$.
\end{define}

We now bring Jardine's definition of a local fibration (see \cite{Jar}).

\begin{define}\label{d:local}
Let $f:X\to Y$ be a map in $\SPS(\cC)$. We say that $f$ is a \emph{local fibration} if $f$ has the local right lifting property with respect to all inclusions of the form $\Lambda^n_k \to \Del^n$ $(n\geq 0,0\leq k\leq n)$.

We denote the class of local fibrations in $\SPS(\cC)$ by $\cF$.
\end{define}

\begin{rem}
Note that every levelwise fibration in $\SPS(\cC)$ is also a local fibration, since then we have the usual lifting property and, in particular, the local one.
\end{rem}

\begin{lem}[{\cite[Proposition 7.2]{DuIs}}]\label{l:FW}
The class of maps in $\cF \cap \cW$ is precisely the class of maps having the local right lifting property with respect to all inclusions  of the form $\partial\Delta^n\to \Delta^n$ $(n\geq 0)$. We call such maps \emph{local acyclic fibrations}.
\end{lem}

\begin{prop}\label{p:SPS_wfc}
$(\SPS(\cC),\cW,\cF)$ is a weak fibration category.
\end{prop}

\begin{proof}
$\SPS(\mcal{C})$ has all limits and colimits, and they are computed objectwise.
Since $\cF$ is defined by a local lifting property, it is easy to see that it is a subcategory that contains all the isomorphisms and is closed under base change. The same is true for $\cF \cap \cW$, by Lemma \ref{l:FW}. The fact that $\mcal{W}$ has the two out of three property and contains all the isomorphisms is also clear. Thus it remains to show the existence of factorizations. Consider a functorial factorization in simplicial sets into a weak equivalence followed by a fibration. Given a map $f:X\to Y$ in $\SPS(\cC)$, we can apply this functorial factorization levelwise, and obtain a factorization of $f$ in $\SPS(\cC)$
$$X \to Z \to Y,$$
where $X\to Z$ is a levelwise weak equivalence, and thus in $\cW$, and  $Z\to Y$ is a  levelwise fibration, and thus in $\cF$.
\end{proof}

Note that the weak fibration category $(\SPS(\cC),\cW,\cF)$ is naturally enriched over $\cS$.
For a simplicial presheaf $X\in \SPS(\cC)$, and a simplicial set $K\in \cS$, we define $K\otimes X ,X^K\in \SPS(\cC)$ levelwise. This makes $\SPS(\cC)$ tensored and cotensored over $\cS$. The following two lemmas are based partly on Corollary 7.4 in \cite{DuIs}, and can be shown by just unwinding the definitions:
\begin{lem}\label{l:lf}
Let $f:X \to Y$ be a map in $\SPS(\cC)$. Then $f$ is a local fibration iff for every map of the form $\Lambda^n_k \to \Del^n$ $(n\geq 0,0\leq k\leq n)$, the induced map
$$X^{\Delta^n}\to Y^{\Delta^n}\times_{Y^{\Lambda^n_k}} X^{\Lambda^n_k}$$ is a a local epimorphism in dimension 0.
\end{lem}

\begin{lem}\label{l:laf}
Let $f:X \to Y$ be a map in $\SPS(\cC)$. Then $f$ is a local acyclic fibration iff for every map of the form $\partial\Delta^n\to \Delta^n$ $(n\geq 0)$, the induced map $$X^{\Delta^n}\to Y^{\Delta^n}\times_{Y^{\partial\Delta^n}} X^{\partial\Delta^n}$$ is a local epimorphism in dimension 0.
\end{lem}

\begin{define}
Let $\lambda$ be a cardinal. We call a presheaf $F \in \PS(\cC)$ $\lambda$-\emph{bounded} if
$|F(c)| \leq \lambda$  for every $c\in \Ob (\cC)$. We denote by $\PS_{\lambda}(\cC)$ the full subcategory of  $\PS(\cC)$ spanned by the $\lambda$-bounded presheaves. Similarly we say that a simplicial presheaf $F \in \SPS(\cC)$ is $\lambda$-\emph{bounded} if $F_n$ is $\lambda$-bounded  for every $n \geq 0$, and  denote by $\SPS_{\lambda}(\cC)$ the full subcategory of $\SPS(\cC)$ spanned by the $\lambda$-bounded simplicial presheaves.
\end{define}

\begin{prop}\label{p:SPS_wfc_l}
Let $\lambda \geq \aleph_0$ be a cardinal. Then $(\SPS_{\lambda}(\cC),\cW,\cF)$ is a weak fibration category.
\end{prop}

\begin{proof}
It is clear that $\SPS_\lambda(\cC)$ is closed under finite limits, so the only thing to verify is that one can factor every morphism in $\SPS_\lambda(\cC)$ inside $\SPS_\lambda(\cC)$. Since the factorization described in the proof of Proposition \ref{p:SPS_wfc} can be constructed by applying the $\Ex^{\infty}$ functor, it is not hard to see that this is indeed the case.
\end{proof}

\subsection{Simplicial sheaves as a weak fibration category}
Let $\SSh(\cC):= \Sh(\cC)^{\Del^{\op}}$ denote the category of simplicial sheaves on $\cC$. Note that $\SSh(\cC)$ is just the full subcategory of $\SPS(\cC)$ spanned by the objects that satisfy the (usual) sheaf condition, since limits in $\SPS(\cC)$ are calculated levelwise.
It is a classical fact (see for example \cite{Jar}) that there is a functor $L:\PS(\cC) \to \PS(\cC)$, such that
$L^2$ is left adjoint to the inclusion $i:\Sh(\cC)\to \PS(\cC)$. The functor $L^2$ is called the sheafification functor. We can take these functors dimensionwise, and obtain a functor $L:\SPS(\cC) \to \SPS(\cC)$ and an adjunction
$$L^2: \SPS(\cC) \rightleftarrows \SSh(\cC): i.$$

\begin{define}
We say that a map in $\SSh(\cC)$ is a local weak equivalence (resp. local fibration) if it is a local weak equivalence  (resp. local fibration) as a map in $\SPS(\cC)$.
\end{define}
By abuse of notation we denote the class of local weak equivalences (respectively local fibrations) in $\SSh(\cC)$ also
by $\cW$ (respectively $\cF$).
\begin{prop}\label{p:SSh_wfc}
$(\SSh(\cC),\cW,\cF)$  is a weak fibration category.
\end{prop}
\begin{proof}
$\Sh(\cC)$ is a topos, and thus has all limits and colimits. It follows that $\SSh(\cC)= \Sh(\cC)^{\Del^{\op}}$ also has all limits and colimits, and they are computed levelwise.
Since the inclusion  $i: \SSh(\cC) \hookrightarrow \SPS(\cC)$ has a left adjoint,
it commutes with pullbacks. It is thus easy to see that $\cF,\cF \cap \cW$ are subcategories that contain all the isomorphisms and are closed under base change. The fact that $\mcal{W}$ has the two out of three property and contains all the isomorphisms is also clear. Thus it remains to show the existence of factorizations. Let $f:X\to Y$ be a map in $\SSh(\cC)$. We already proved that  in $\SPS(\cC)$ we have a factorization
$X \x{\cW}{\lrar} Z \x{\cF}{\lrar} Y$. Now consider the commutative diagram
$$ \xymatrix{
X \ar[d]^{\cong}\ar[r]^{\cW} \ar[rd]^f& Z \ar[d]^{\cF\cW} \ar[r]^{\cF} & Y\ar[d]^{\cong} \\
L^2(X) \ar[r] & L^2(Z)\ar[ru]^g \ar[r]^{\cF} & L^2(Y).
}$$
By \cite[Lemma 1.6]{Jar},  the middle vertical map is in $\cF\cW$,  and by \cite[Corollary 1.8]{Jar}, the map
$L^2(Z)\to L^2(Y)$ is in $\cF$.
Thus we get that $f$ is in $\cW$, and $g$ is in $\cF$.
\end{proof}

The category $\SSh(\cC)$ inherits an $\cS$ enriched structure as a full subcategory of $\SPS(\cC)$. For a simplicial sheaf $X\in \SSh(\cC)$ and a simplicial set $K\in \cS$, we can define $K\otimes X ,X^K$ as in $\SPS(\cC)$, and then take sheafification. This makes $\SSh(\cC)$ tensored and cotensored over $\cS$. It is not hard to check that Lemmas \ref{l:lf} and \ref{l:laf} remain valid, if we replace $\SPS(\cC)$ by $\SSh(\cC)$.

\begin{define}
Let $\lambda$ be a cardinal. We say that a (simplicial) sheaf is $\lambda$-\emph{bounded} if it is $\lambda$-bounded as a (simplicial) presheaf. We shall denote by $\SSh_{\lambda}(\cC)$ the full subcategory of $\SSh(\cC)$ spanned by the $\lambda$-bounded simplicial sheaves.
\end{define}

\begin{lem}\label{l:sheaf_small}
Let $\cC$ be a small site and let $\lambda \geq \max\{2^{|\Mor(\cC)|},\aleph_0\}$ be a cardinal. Then $L$ sends $\lambda$-bounded presheaves to $\lambda$-bounded presheaves.
\end{lem}
\begin{proof}
We have the formula
$$L(F)(U) = \colim_{R \in \cJ(U)^{\op}} \lim_{(V\to U)\in R} F(V),$$
where $U\in\cC$, $\cJ(U)$ is the (cofiltered) poset of covering sieves of $U$, considered as subfunctors of $Hon_{\cC}(-,U)$, and $R$ is a covering sieve of $U$, considered as a full subcategory of $\cC_{/U}$.
\end{proof}

\begin{prop}\label{p:SSh_wfc_l}
Let $\lambda  \geq \max\{2^{|\Mor(\cC)|},\aleph_0\}$ be a cardinal.
Then $(\SSh_{\lambda}(\cC),\cW,\cF)$ is a weak fibration category.
\end{prop}
\begin{proof}
By Lemma \ref{l:sheaf_small} and Proposition \ref{p:SPS_wfc_l}, the factorizations described in Proposition \ref{p:SSh_wfc} can be taken in
$\SSh_{\lambda}(\cC)$.
\end{proof}

\subsection{$\SPS(\cC)$ and $\SSh(\cC)$ are homotopically small}\label{ss:small}
In this subsection we will show that the weak fibration categories $\SPS(\cC)$ and $\SSh(\cC)$ are homotopically small (see Definition \ref{d:sub_weak}).
\begin{define}
Let $f:A\to B$ be a map of simplicial sets, and let $X\to Y\to Z$ be a pair of composable  maps in $\SPS(\cC)$.
We say that the diagram $X\to Y\to Z$ has the \emph{relative local} right lifting property with respect to $f$, if for every $U\in \cC$, and every diagram of the form
$$\xymatrix{
A \ar[r] \ar[d]^f & X(U)\ar[d]\\
B \ar[r]  & Z(U),
}$$
there exists a covering sieve $R$ of $U$, such that for every $V\to U$ in $R$, there is a lift:
$$\xymatrix{
A \ar[r] \ar[dd] & X(U) \ar[r]\ar[d] & X(V)\ar[d] \\
\empty & Y(U)\ar[d] \ar[r]   &  Y(V)\ar[d] \\
B \ar[r] \ar@{..>}[rru] & Z(U) \ar[r]   &  Z(V)
.}$$
In this case we denote
$$f \perp^{l} (X \to Y \to Z).$$
\end{define}

\begin{define}
Let $X\to Y\to Z$ be a pair of composable maps in $\SPS(\cC)$. We say that $X\to Y\to Z$ is a \emph{relative local fibration} if for every horn inclusion $$i_{n,i}:\Lambda^n_i \subset \Delta^n ,\:\: n \geq 1,\: 0 \leq i \leq n,$$
we have
$$i_{n,i} \perp^{l} (X \to Y \to Z).$$
\end{define}

\begin{define}
Let $X\to Y\to Z$ be a pair of composable maps in $\SPS(\cC)$. We say that $X\to Y\to Z$ is a \emph{relative local acyclic fibration} if for every boundary inclusion
$$i_{n}:\partial\Delta^n \subset \Delta^n , \:\:n \geq 0,$$
we have
$$i_{n} \perp^{l} (X \to Y \to Z).$$
\end{define}

\begin{lem}\label{l:ind_small}
Let $Y \in \SPS(\cC)$
and let
$$\xymatrix{
X_0 \ar[d]\ar[r] & X_1\ar[dl] \ar[r] & X_2\ar[dll] \ar[r] & \cdots \\
Y \\
}$$
be an ind-tower  over $Y$ in $\SPS(\cC)$,  such that for every $i\geq 0$, the diagram  $X_i \to X_{i+1} \to Y$ is a relative   local (acyclic) fibration. Then the map $\colim X_i \to Y$ is a local (acyclic) fibration.
\end{lem}
\begin{proof}
We prove the lemma for the case of a fibration, and the case of an acyclic fibration is similar.
It is enough to prove that
$$(\Lambda^n_i \to \Delta^n)\perp^l (\colim X_i \to Y),$$ for every
$ n \geq 1, 0 \leq i \leq n$.
Indeed consider a diagram  of the form
$$\xymatrix{
\Lambda_i^n \ar[r] \ar[d] & \colim X_i(U) \ar[d] \\
\Delta^n \ar[r] & Y(U).
}$$
Since  $\Lambda_i^n$ is a finitely presentable object (see \cite{AR}) in simplicial sets, we can factor  for some $j \geq 0$
$$\xymatrix{
\Lambda_i^n \ar[r] \ar[ddd] & X_j(U) \ar[d] \\
& X_{j+1}(U)\ar[d]\\
& \colim X_j(U)\ar[d] \\
\Delta^n \ar[r] & Y(U).
}$$

Now there exists some covering sieve $R$ for $U$, such that for every $V \to U$ in $R$ we have a lift:
$$\xymatrix{
\Lambda_i^n \ar[r] \ar[ddd] & X_j(U) \ar[d] \ar[r] & X_j(V) \ar[d]\\
& X_{j+1}(U)\ar[d]\ar[r] & X_{j+1}(V)\ar[d]\\
& \colim X_j(U)\ar[d] \ar[r] & \colim X_j(V)\ar[d]\\
\Delta^n \ar@{.>}[uurr] \ar[r] & Y(U)\ar[r] & Y(V).
}$$

Thus we get that required lift.
\end{proof}

\begin{lem}\label{l:next_small}
Let $\cC$ be a small site and $\lambda \geq \max\{|\Mor(\cC)|,\aleph_0\}$ a cardinal. Then every diagram
$$\xymatrix{
&Y_0\ar[d]\ar[dr]& \\
X \ar[r]\ar[ur]& Y \ar[r] & Z
}$$
in $\SPS(\cC)$ such that
\begin{enumerate}
\item $X,Y_0$ and $Z$ are $\lambda$-bounded,
\item the map $Y \to Z$ is a local (acyclic) fibration,
\end{enumerate}
can be extended into a diagram
$$\xymatrix{
&Y_0\ar[d]\ar[ddr]& \\
& Y_1\ar[d]\ar[dr]& \\
X \ar[r]\ar[uur]\ar[ur] & Y \ar[r] & Z
}$$
in $\SPS(\cC)$ such that
\begin{enumerate}
\item The composition $Y_0\to Y_1\to Y$ is the map $Y_0\to Y$ in the original diagram.
\item $Y_1$ is $\lambda$-bounded,
\item the diagram  $Y_0\to Y_1 \to Z$ is a relative local (acyclic) fibration.
\end{enumerate}
\end{lem}
\begin{proof}
We prove the lemma for the case of a local acyclic  fibration, and the case of a local fibration is similar.
Let $U \in \cC$ be an object. Consider the set of all possible diagrams of the form

$$\xymatrix{
\partial \Delta^n \ar[r] \ar[d] & Y_0(U)\ar[d]\\
\Delta^n \ar[r] & Z(U),
}$$
for $n \geq 0$.
We denote this set by $D_U(Y_0 \to Z)$. It is easy to verify that since $Y_0$ and $Z$ are $\lambda$-bounded and $\lambda \geq\aleph_0$ we have
$$|D_U(Y_0\to Z)| \leq \lambda.$$
Since $Y \to Z$ is a local acyclic fibration, we have for every $d\in D_U:=D_U(Y_0\to Z)$ a covering sieve $R_d$ of $U$ such that for every $r:V\to U$ in $R_d$ we can complete that diagram $d$ to a diagram
$$\xymatrix{
\partial \Delta^n \ar[r] \ar[dd] & Y_0(U)\ar[d]\ar[r]& Y_0(V)\ar[d]\\
                & Y(U)\ar[d]\ar[r]& Y(V)\ar[d]\\
\Delta^n \ar[r]\ar[urr] & Z(U)\ar[r] & Z(V).
}$$
Thus, we get for every $U \in \cC$, $d \in D_U$ and $r \in R_d$ a diagram
$$\xymatrix{
\partial \Delta^n \times \h_V \ar[r] \ar[d] & Y_0\ar[d]\\
\Delta^n \times \h_V \ar[r]\ar[d] & Y \ar[d]\\
\Delta^n \times \h_U \ar[r]  &  Z,
}$$
where $\h_U$ denotes the representing presheaf of $U$.
We denote the set of all possible $U \in \cC$, $d \in D_U$ and $r \in R_d$ by $S$.
Note that since $|\Mor(\cC)| \leq \lambda$ we have that $\Ob( \cC)|\leq \lambda$ and $|R_d|\leq \lambda$ for every $d \in D_U$. Thus
$|S| \leq \lambda$.  Now consider the coproduct  parameterized by $S$:

$$\xymatrix{
\coprod_{S}\partial \Delta^n \times \h_V \ar[r] \ar[d] & Y_0\ar[d]\\
\coprod_{S}\Delta^n \times \h_V \ar[r] & Y.
}$$
 We shall take $Y_1$ to be the pushout of the upper left corner of the diagram above.
 Note that by definition it is clear that $Y_1$ fits in the diagram
$$\xymatrix{
&Y_0\ar[d]\ar[ddr]& \\
& Y_1\ar[d]\ar[dr]& \\
X \ar[r]\ar[uur]\ar[ur] & Y \ar[r] & Z.
}$$
To prove that $Y_1$ is $\lambda$-bounded, note that all the $\h_V$ are clearly $\lambda$-bounded and that $S$ is $\lambda$-bounded. It remains to show that $Y_0 \to Y_1 \to Z$ is a relative local acyclic fibration.
For this, let $U\in \cC$ be an object and let $d \in D_U(Y_0 \to Z)$ be a diagram
$$\xymatrix{
\partial \Delta^n \ar[r] \ar[d] & Y_0(U)\ar[d]\\
\Delta^n \ar[r] & Z(U).
}$$

Now $R_d$ is a covering sieve of $U$. For every $r:V\to U$ in $R_d$ we have a diagram
$$\xymatrix{
\partial \Delta^n\times \h_V \ar[r] \ar[ddd] & \partial \Delta^n\times \h_U \ar[r]\ar[ddd] & Y_0\ar[d]\\
 & & Y_1\ar[d]\\
 & & Y\ar[d]\\
\Delta^n\times \h_V \ar[r] \ar@{.>}[uurr] \ar[urr]&  \Delta^n\times \h_U \ar[r] & Z,\\}$$
where the dotted map to $Y_1$ is the one that comes from the $s=(U,d,r)\in S$ coordinate of the coproduct in the construction of $Y_1$.
By adjunction we get a diagram of the form
$$\xymatrix{
\partial \Delta^n \ar[r] \ar[dd] & Y_0(U)\ar[d]\ar[r]& Y_0(V)\ar[d]\\
                & Y_1(U)\ar[d]\ar[r]& Y_1(V)\ar[d]\\
\Delta^n \ar[r]\ar[urr] & Z(U)\ar[r] & Z(V),
}$$
which give us the required lift.
\end{proof}

%

\begin{prop}\label{p:SPS_dense}
Let $\cC$ be a small site and $\lambda \geq \max\{|\Mor(\cC)|,\aleph_0\}$ a cardinal. Then the  full sub weak fibration category
$\SPS_\lambda(\cC)\subseteq \SPS(\cC)$ is dense (see Definition \ref{d:sub_weak}).
\end{prop}
\begin{proof}
Let $X\to H \xrightarrow{\mathcal{F}} Y$ be a diagram in $\SPS(\cC)$ such that $X\to Y$ is in $\SPS_{\lambda}(\cC)$.
We need to complete the above diagram to a diagram
$$\xymatrix{
X \ar[dr]\ar[r]  & H'  \ar[r]^{\cF} \ar[d] & Y \\
\empty &  H\ar[ur]_{\cF} & \empty }$$
such that $H'\in \SPS_{\lambda}(\cC)$.
Let us denote $H_0 :=X$. Note that we have a diagram
$$\xymatrix{
X \ar[dr]\ar[r]^{=}  & H_0  \ar[r]^{} \ar[d] & Y.  \\
\empty &  H\ar[ur]_{\cF} & \empty }$$
By Lemma \ref{l:next_small} we get a diagram of the form
$$\xymatrix{
&H_0\ar[d]\ar[ddr]& \\
& H_1\ar[d]\ar[dr]& \\
X \ar[r]\ar[uur]\ar[ur] & H \ar[r] & Y,
}$$
with $H_1$ $\lambda$-bounded and $H_0 \to H_1 \to Y$ a relative local fibration. By applying Lemma \ref{l:next_small} over and over again, we obtain, for every $n\geq 0$, a diagram of the form
$$\xymatrix{
&H_0\ar[d]\ar[ddddr]& \\
& H_1\ar[dddr]\ar[d]& \\
& \vdots\ar[d] &\\
&  H_n\ar[dr]\ar[d]& \\
X\ar[uuuur] \ar[uuur]\ar[ur]\ar[r] & H \ar[r] & Y,
}$$
such that for every $0\leq i\leq n-1$ we have that $H_i \to H_{i+1} \to Y$ is a relative local fibration and $H_i$ is $\lambda$-bounded.
Thus, by Lemma \ref{l:ind_small}, we can take $H': = \colim H_i$.

The proof for a local acyclic fibration is similar.
\end{proof}

\begin{prop}\label{p:SSh_dense}
Let $\cC$ be a small site and $\lambda \geq \max\{2^{|\Mor(\cC)|},\aleph_0\}$ a cardinal. Then the  full sub weak fibration category
$\SSh_\lambda(\cC)\subseteq \SSh(\cC)$ is dense (see Definition \ref{d:sub_weak}).
\end{prop}
\begin{proof}

Let $X\to H \xrightarrow{\mathcal{F}} Y$ be a diagram in $\SSh(\cC)$ such that $X\to Y$ is in $\SSh_{\lambda}(\cC)$.
We need to complete the above diagram to a diagram
$$\xymatrix{
X \ar[dr]\ar[r]  & H'  \ar[r]^{\cF} \ar[d] & Y  \\
\empty &  H\ar[ur]_{\cF} & \empty }$$
such that $H'\in \SSh_{\lambda}(\cC)$.

By Proposition \ref{p:SPS_dense} we can get a diagram
 $$\xymatrix{
X \ar[dr]\ar[r]  & H'  \ar[r]^{\cF} \ar[d] & Y  \\
\empty &  H\ar[ur]_{\cF} & \empty }$$
such that $H'\in \SPS_{\lambda}(\cC)$. Since $Y ,H\in \SSh(\cC)$ it is clear that we have a diagram
$$\xymatrix{ & H'\ar[d]\ar[dr]^{\cF} & \\
X\ar[ur] \ar[dr]\ar[r]  & L^2(H')  \ar[r] \ar[d] & Y,  \\
\empty &  H\ar[ur]_{\cF} & \empty }$$
where  $L^2(H')\in \SSh_{\lambda}(\cC)$ by Lemma \ref{l:sheaf_small}.
It remains to show that the map $L^2(H') \to Y$ is in $\cF$.

Consider the diagram
$$ \xymatrix{
 H' \ar[d] \ar[r]^{\cF} & Y\ar[d]^{\cong} \\
 L^2(H')\ar[ru]\ar[r] & L^2(Y).
}$$
Since $H' \to Y$  is in $\cF$, by  \cite[Corollary 1.8]{Jar}, the map
$L^2(H')\to L^2(Y)$ is in $\cF$ and thus so is the isomorphic map $L^2(H') \to Y$.

The proof for the case of a local acyclic fibration is similar, using the fact that the left vertical map in the last diagram is in $\cF\cW$ by \cite[Lemma 1.6]{Jar}.
\end{proof}

Using Lemma \ref{cardinal_condition} we get immediately from Propositions \ref{p:SPS_dense} and \ref{p:SSh_dense} the following:

\begin{cor}
Let $\cC$ be a small site. Then the weak fibration categories $\SPS(\cC)$ and $\SSh(\cC)$ are homotopically small (see Definition \ref{d:sub_weak}).
\end{cor}

\subsection{The new model structures}\label{s:new_big}
As shown in \cite{Jar1}, Theorems 2 and 5, there exist \emph{proper} model category structures on the categories $\SPS(\cC)$ and $\SSh(\cC)$ in which the weak equivalences are the local weak equivalences. Thus, as relative categories
$(\SPS(\cC),\cW)$ and $(\SSh(\cC),\cW)$ are pro-admissible (see Example \ref{e:proper}).
We have shown in this section that $\SPS(\cC)$ and $\SSh(\cC)$ can also be given (other) weak fibration structures, with the same class of weak equivalences. Furthermore, we have shown in Section \ref{ss:small} that these weak fibration categories are homotopically small. It follows from Theorem \ref{t:model_big} that there are induced model structures on $\Pro(\SPS(\cC))$ and $\Pro(\SSh(\cC))$. Since $\SPS(\cC)$ and $\SSh(\cC)$ are complete and cocomplete, it follows that the same is true for $\Pro(\SPS(\cC))$ and $\Pro(\SSh(\cC))$ (see the remark following Theorem \ref{t:model_big}). Thus $\Pro(\SPS(\cC))$ and $\Pro(\SSh(\cC))$ are actually model categories (but the factorizations may not be functorial).

Consider the inclusion functor
$$i: \SSh(\cC) \hookrightarrow \SPS(\cC).$$
Since $i$ has a left adjoint (namely $L^2$) it commutes with all small limits. Furthermore, $i$ clearly preserves local fibrations and local acyclic fibrations. Thus $i$ is a weak right Quillen functor, and it induces a Quillen adjunction (see Proposition \ref{p:RQFunc_big} and Lemma \ref{l:l_adjoint} (2))
$$ \Pro(L^2) : \Pro(\SPS(\cC))  \leftrightarrows  \Pro(\SSh(\cC)) :\Pro(i). $$
We claim that this Quillen adjunction is a Quillen equivalence. This follows easily from the fact that both $L^2$ and $i$ preserve local weak equivalences, and the unit and counit of the adjunction $L^2 \dashv i$ are also weak equivalences (see \cite[Lemma 1.6]{Jar}).

Consider the sheafification functor
$$L^2: \SPS(\cC) \lrar \SSh(\cC).$$
It is a well-known fact that $L^2$ commutes with finite limits (see for example \cite{Jar}). By \cite[Corollary 1.8]{Jar}, $L^2$ preserves local fibrations, and by \cite[Lemma 1.6]{Jar}, $L^2$ preserves local acyclic fibrations. Thus $L^2$ is a weak right Quillen functor. The functor $L^2$ preserves all small colimits (being a left adjoint to $i$), so in particular $L^2$ is accessible. Thus, by Lemma \ref{l:l_adjoint} (1) and Proposition \ref{p:RQFunc_big}, we have a Quillen adjunction:
$$ L_{L^2} : \Pro(\SSh(\cC))  \leftrightarrows  \Pro(\SPS(\cC)) :\Pro(L^2). $$
This Quillen adjunction can also be shown to be a Quillen equivalence.

\begin{example}\label{e:Verdier}
We now present an application of the model structure constructed in this section. Namely, we give a simple proof of Verdier's hypercovering theorem \cite{SGA4-I}.

We first note that there is a natural functorial path object on $\SPS(\cC)$ given by the simplicial structure. For every object $C\in \SPS(\cC)_f$ we define $P(C)(0)\to P(C)(1)\to P(C)(2)$ to be the path object
$$C\cong C^{\Delta^{0}}\xrightarrow{}  C^{\Delta^{1}}\xrightarrow{}C^{(\Delta^{\{0\}}\coprod\Delta^{\{1\}})}\cong C\times C.$$
It is not hard to verify that the first map above is a local weak equivalence and the second map is a local fibration.
This functorial path object gives rise to the category $\pi \SPS(\cC)_f$. Clearly, for every $A,B\in \SPS(\cC)_f$ we have
$$\Hom_{\pi \SPS(\cC)_f}(A,B)={\SPS(\cC)_f}(A,B)/\sim=\pi_0( \Map_{\SPS(\cC)}(A, B)).$$

Now let $F:\cC^{\op}\to \Ab$ be a presheaf of abelian groups on $\cC$, and let $n\geq 0$. Let $K(-,n):\Ab\to \cS$ be a levelwise fibrant model for the Eilenberg-MacLane functor (see for example \cite{GJ}). Composing this functor with $F$ we get a simplicial presheaf $K(F,n):\cC^{\op}\to \cS$. As explained, for example, in \cite{Bro}, there is a natural isomorphism
$$H^n(\cC,\widetilde{F})\cong \Hom_{\Ho(\SPS(\cC))}(*,K(F,n)),$$
where $H^n(\cC,\widetilde{F})$ is the n'th sheaf cohomology group of the site $\cC$, with coefficients in the (sheaf associated to the) presheaf $F$.

We define $\lambda:=\max\{|\Mor(\cC)|,\aleph_0\}$. Then according to Proposition \ref{p:SPS_dense} the  full sub weak fibration category
$\SPS_\lambda(\cC)\subseteq \SPS(\cC)$ is dense. The category $\SPS_\lambda(\cC)$ is essentially small, ant it clearly contains $*$ and $\phi$.
Thus, using Proposition \ref{p:Ho}, we get that there are canonical isomorphisms
$$H^n(\cC,\widetilde{F})\cong \Hom_{\Ho(\SPS(\cC))}(*,K(F,n)) \cong \colim \limits_{U\in \pi_0(\SPS_\lambda(\cC))_{fw}^{\op}}\Hom_{\pi_0(\SPS(\cC))}(U,K(F,n))$$
$$\cong \colim\limits_{{U\in \pi_0(\SPS_{\lambda}(\cC))_{fw}^{\op}}}\pi_0( \Map_{\SPS(\cC)}(U, K(F,n)))\cong \colim\limits_{{U\in \pi_0(\SPS_{\lambda}(\cC))_{fw}^{\op}}}H^n_{Cech}(\cC,F,U),$$
where the last isomorphism is a classical observation. This is exactly Verdier's theorem, saying that the sheaf cohomology of a site can be computed as the colimit over all hypercoverings in the site of the \v{C}ech cohomologies.
\end{example}

\begin{rem}
In order to get Verdier's theorem, we should restrict this last colimit
only to hypercoverings, that is, to those locally fibrant locally contractible simplicial presheaves, which are levelwise
representable in the \'etale site of $X$. However, since the hypercoverings are cofinal among all the locally fibrant locally contractible simplicial presheaves (\cite[Lemma 2.2]{Jar3}), the resulting colimit is isomorphic.
\end{rem}

\subsection{Comparison with the Isaksen-Jardine model structure}\label{s:compare}
In this subsection we compare our ``projective" model structure on pro-simplicial presheaves of Section \ref{s:new_big}, with the ``injective" model structure on the same category, that can be deduced from \cite{Isa}, when applied to \cite{Jar}. Namely, we show that the identity functors constitute a Quillen equivalence between these two model structures.

As shown in \cite{Jar}, there exists a model structure on the category $\SPS(\cC)$, in which the cofibrations are the levelwise cofibrations, and the weak equivalences are the local weak equivalences. Furthermore, this model structure is proper (see \cite{Jar1}, Theorem 2). It follows from \cite[Theorem 4.15]{Isa} that there exists a model structure on $\Pro(\SPS(\mcal{C}))$, which we will denote by $\Pro(\SPS(\mcal{C}))_I$, such that:
\begin{enumerate}
\item The weak equivalences are $\mathbf{W}_I := \Lw^{\cong}(\cW)$, where $\cW$ is the class of local weak equivalences.
\item The fibrations are $\mathbf{F}_I := \R(\Sp^{\cong}(\mcal{F}_J))$, where $\cF_J$ is the class of fibrations in the Jardine structure on  $\SPS(\cC)$.
\item The cofibrations are $\mathbf{C}_I := {}^{\perp} \Sp^{\cong}(\mcal{F}_J\cap \cW)= {}^{\perp} (\mcal{F}_J\cap \cW)$.
\end{enumerate}
This model structure on $\Pro(\SPS(\mcal{C}))$ was considered by Jardine in \cite{Jar2}.
We call $\Pro(\SPS(\mcal{C}))_I$ the \emph{injective} model structure on $\Pro(\SPS(\mcal{C}))$, since every levelwise cofibration in $\SPS(\mcal{C})$ is a cofibration in $\Pro(\SPS(\mcal{C}))_I$ (between simple objects).

Consider the model structure constructed on $\Pro(\SPS(\mcal{C}))$ in Section \ref{s:new_big}, which we will denote by $\Pro(\SPS(\mcal{C}))_P$. We have that:
\begin{enumerate}
\item The weak equivalences are $\mathbf{W}_P := \Lw^{\cong}(\cW)$, where $\cW$ is the class of local weak equivalences.
\item The fibrations are $\mathbf{F}_P := \R(\Sp^{\cong}(\cF))$, where $\cF$ is the class of local fibrations.
\item The cofibrations are $\mathbf{C}_P := {}^{\perp} \Sp^{\cong}(\cF\cap \cW)= {}^{\perp} (\cF\cap \cW)$.
\end{enumerate}
We call $\Pro(\SPS(\mcal{C}))_P$ the \emph{projective} model structure on $\Pro(\SPS(\mcal{C}))$, since every local fibration (and, in particular, every levelwise fibration) in $\SPS(\mcal{C})$ is a fibration in $\Pro(\SPS(\mcal{C}))_P$ (between simple objects).

Let $f$ be a fibration in the Jardine model structure on $\SPS(\cC)$. Since the Jardine model structure is a left Bousfield localization of the injective model structure on $\SPS(\cC)$ (see, for example, \cite[Section A.3.3]{Lur}), $f$ is also a fibration in the injective model structure on $\SPS(\cC)$. It follows that $f$ is a levelwise fibration in $\SPS(\cC)$, and in particular a local fibration in $\SPS(\cC)$. Thus, $\cF_J\subseteq\cF$. It follows that
$$\mathbf{C}_P = {}^{\perp} (\cF\cap \cW)\subseteq {}^{\perp} (\mcal{F}_J\cap \cW)=\mathbf{C}_I .$$
From this inclusion we conclude trivially that
$$\id:\Pro(\SPS(\mcal{C}))_P\adj \Pro(\SPS(\mcal{C}))_I:\id$$
is a Quillen equivalence between the projective and injective model structures on $\Pro(\SPS(\mcal{C}))$.

\section{The \'Etale Homotopy Type as a Derived Functor}\label{s:etale_big}

Given an algebraic variety $X$, Artin and Mazur defined in \cite{AM} the notion of the \'etale homotopy type of $X$, by applying the connected components functor to the hypercoverings in the \'etale site of $X$. This gives rise to an object in the category $\Pro(\Ho(\cS))$, where $\cS$ is the category of simplicial sets. Artin and Mazur's construction can be easily generalized to any locally connected site $\cC$. However, for many applications it is essential to lift Artin and Mazur's construction from $\Pro(\Ho(\cS))$ to $\Pro(\cS)$. This was achieved by Friedlander in \cite{Fri}, by replacing hypercoverings with rigid hypercoverings. In this section we shall give an alternative solution, by using the model structure described in Section \ref{s:SPS}. This new approach will give a nice description of the \'etale homotopy type as the result of applying a derived functor, and will also have the advantage of working with usual hypercoverings rather than the more involved rigid hypercoverings (see Definition \ref{d:etale_big} and Proposition \ref{p:AM} below). Another advantage is that our construction works over any site.

\begin{prop}\label{p:geo_wrq_big}
Let $\cT = \Sh(\cC)$, ${\cR} = \Sh(\cD)$ be two topoi, and  let
$$f^*:{\cR} \rightleftarrows \cT : f_*$$
be a geometric morphism. Then $f^*$ induces a weak \textbf{right} Quillen functor
$$f^*:{\cR}^{\Del^{\op}}\to \cT^{\Del^{\op}},$$
relative to the local weak fibration structure on simplicial sheaves, described in Section \ref{s:SPS}.
\end{prop}

\begin{proof}
$f^*:{\cR}\to \cT$ preserves finite limits by definition of a geometric morphism, so $f^*:{\cR}^{\Del^{\op}}\to \cT^{\Del^{\op}}$ also preserves finite limits.
Further, since $f^*:{\cR}\to \cT$ preserves local epimorphisms, it follows from Lemmas \ref{l:lf} and \ref{l:laf} that $f^*:{\cR}^{\Del^{\op}}\to \cT^{\Del^{\op}}$ preserves local fibrations and local acyclic fibrations.
\end{proof}

\begin{define}\label{d:etale_big}
Let $\cT$ be a topos. Consider the unique geometric morphism
$$\Gamma^*:\Set \rightleftarrows \cT : \Gamma_*.$$
Here, $\Gamma_*$ is the global sections functor and $\Gamma^*$ is the constant sheaf functor. By Proposition \ref{p:geo_wrq_big}, we have an induced weak right Quillen functor $\Gamma^*:\Set^{\Del^{\op}}\to \cT^{\Del^{\op}}$. The functor $\Gamma^*$ preserves all small colimits (being a left adjoint), so in particular $\Gamma^*$ is accessible. Thus, by Lemma \ref{l:l_adjoint} (1) and Proposition \ref{p:RQFunc_big}, we get a Quillen adjunction:
$$L_{\Gamma^*}:\Pro(\cT^{\Del^{\op}}) \rightleftarrows \Pro(\Set^{\Del^{\op}}):\Pro(\Gamma^*).$$

We define the \emph{topological realization of $\cT$} to be
$$|\cT| := \mathbb{L}L_{\Gamma^*}(*_\cT)\in \Ho(\Pro(\Set^{\Del^{\op}}))=\Ho(\Pro(\cS)),$$
where $*_\cT$ is a terminal object of $\cT^{\Del^{\op}}.$

If $\cC$ is a small Grothendieck site, we define the \emph{topological realization of $\cC$} to be $|\cC|:=|\Sh(\cC)|.$
\end{define}

A case of special interest is when $\cT$ is locally connected, i.e. when $\Gamma^*:\Set\to \cT$ has a \emph{left} adjoint $\Gamma_!:\cT\to \Set$. In geometric situations, the functor $\Gamma_!$ is induced by the functor which sends a scheme to its set of connected scheme-theoretic components. Thus we denote $\pi_0 := \Gamma_!$. By Lemma \ref{l:l_adjoint} (2) and the uniqueness of the left adjoint we get, when $\cT$ is locally connected,
$$L_{\Gamma^*}\cong \Pro(\pi_0).$$
It follows that
$$|\cT|= \mathbb{L}\Pro(\pi_0)(*_\cT).$$
This formula allows us to give a quite concrete description of
$|\cT|$. Recall that in order to compute a left derived functor, one should apply the original functor to a cofibrant replacement. Thus, we should apply $\Pro(\pi_0)$ to a cofibrant replacement of $*_\cT$ in $Pro(\cT^{\Del^{\op}})$.

Recall that in the discussion following Definition \ref{d:path_object}, we presented an explicit construction for a cofibrant replacement to an arbitrary object in a homotopically small pro-admissible weak fibration category. We define $\lambda:=\max\{2^{|\Mor(\cC)|},\aleph_0\}$. Then according to Proposition \ref{p:SSh_dense} the  full sub weak fibration category
$$\cT^{\Del^{\op}}_\lambda :=\SSh_\lambda(\cC)\subseteq \SSh(\cC)=\cT$$
is dense. The category $\cT^{\Del^{\op}}_\lambda$ is essentially small and it clearly contains $*$ and $\phi$. Applying our construction to the terminal object of $\cT^{\Del^{\op}}$ we obtain its cofibrant replacement as a functor
$$H=H_*:\cA_*\xrightarrow{j}\widehat{(\cT^{\Del^{\op}}_\lambda)_{fw}}\subseteq \cT^{\Del^{\op}}.$$
Thus $|\cT|$ is just the composite
$$|\cT|:\cA_*\xrightarrow{j}\widehat{(\cT^{\Del^{\op}}_\lambda)_{fw}}\subseteq \cT^{\Del^{\op}}\xrightarrow{\pi_0}\cS.$$

\begin{example}
Let $\cC$ be a small category, equipped with the trivial topology. Then $\cT=\Sh(\cC)=\Set^{\cC^{\op}}$ is the category of functors $\cC^{\op}\to \Set$. Then $\Gamma^*:\Set\to \Set^{\cC^{\op}}$ is just the diagonal functor. The topos $\cT$ is thus locally connected, since $\Gamma^*$ has a left adjoint which is the colimit functor $\colim=\Gamma_!=\pi_0:\Set^{\cC^{\op}}\to \Set$. By definition
$$|\cC|=|\cT|= \mathbb{L}\Pro(\colim)(*_\cT).$$
The above-defined weak fibration structure on $\cT^{\Delta^{\op}}=\cS^{\cC^{\op}}$ is just the projective model structure on $\cS^{\cC^{\op}}$. Let $E(\cC^{\op})\to *$ be a cofibrant replacement to the terminal object in the projective structure on $\cS^{\cC^{\op}}$. By \cite{Isa} we have that the cofibrations in the projective model structure on $\Pro(\cS^{\cC^{\op}})$ are just $\Lw^{\cong}(\cC of)$, where $\cC of$ are the cofibrations in $\cS^{\cC^{\op}}$. Thus $E(\cC^{\op})\to *$ is also a cofibrant replacement to the terminal object in the projective structure on $\Pro(\cS^{\cC^{\op}})$, and we get that
$$|\cC|=|\cT|= \mathbb{L}\Pro(\colim)(*_\cT)=\Pro(\colim)(E(\cC^{\op}))=$$
$$=\colim_{\cC^{\op}}E(\cC^{\op})=\hocolim_{\cC^{\op}}*\simeq N(\cC^{\op})\simeq N(\cC).$$
\end{example}

\begin{prop}\label{p:AM}
Let $X$ be a locally notherian scheme and $X_{\acute{e}t}$ its \'etale topos.
The natural functor
$$\gamma : \cS \to \Ho(\cS)$$
induces a functor
$$\gamma : \Pro(\cS) \to \Pro(\Ho(\cS)).$$
Then $\gamma(|X_{\acute{e}t}|)$ is isomorphic to the \'etale homotopy type of $X$, defined in \cite{AM}.
\end{prop}
\begin{proof}
Define $\cT:=X_{\acute{e}t}$. First note that $\cT$ is locally connected, so the discussion above concerning locally connected topoi applies.

There is a natural functorial path object on $\cT^{\Del^{\op}}_\lambda$ given by the simplicial structure. For every object $C\in (\cT^{\Del^{\op}}_\lambda)_f$ we define $P(C)(0)\to P(C)(1)\to P(C)(2)$ to be the path object
$$C\cong C^{\Delta^{0}}\xrightarrow{}  C^{\Delta^{1}}\xrightarrow{}C^{(\Delta^{\{0\}}\coprod\Delta^{\{1\}})}\cong C\times C$$
(note that there is no need to take sheafification).
It is not hard to verify that the first map above is a local weak equivalence and the second map is a local fibration.
This functorial path object gives rise to the categories $\pi (\cT^{\Del^{\op}}_\lambda)_f$ and $\pi (\cT^{\Del^{\op}}_\lambda)_{fw}$ as explained in Section \ref{s:Ho}. Clearly, for every $A,B\in (\cT^{\Del^{\op}}_\lambda)_f$ we have
$$\Hom_{\pi (\cT^{\Del^{\op}}_\lambda)_f}(A,B)={(\cT^{\Del^{\op}}_\lambda)_f}(A,B)/\sim=\pi_0( \Map_{\cT^{\Del^{\op}}_\lambda}(A, B)).$$
As we have shown in Section \ref{s:Ho}, $\pi (\cT^{\Del^{\op}}_\lambda)_{fw}$ is cofiltered, and the composition functor
$$\cA_*\xrightarrow{j}\widehat{(\cT^{\Del^{\op}}_\lambda)_{fw}}\to \pi\widehat{(\cT^{\Del^{\op}}_\lambda)_{fw}}\to \pi (\cT^{\Del^{\op}}_\lambda)_{fw}$$
is cofinal. From the commutative diagram
$$\xymatrix{
\cA_*\ar[r]^j\ar@/^2pc/[rrr]^{|\cT|} & \widehat{(\cT^{\Del^{\op}}_\lambda)_{fw}}\ar[d]\ar[r]^{}  & (\cT^{\Del^{\op}}_\lambda)_{fw} \ar[d] ^{\gamma}\ar[r]^{\pi_0} & \cS\ar[d]^{\gamma}& \\
        & \widehat{\pi{(\cT^{\Del^{\op}}_\lambda)_{fw}}}\ar[r]^{} & \pi (\cT^{\Del^{\op}}_\lambda)_{fw}\ar[r]^{\pi_0} & \Ho(\cS)}$$
we see that the pro-object $\gamma|\cT|: \cA_* \to \Ho(\cS)$ factors through $\pi (\cT^{\Del^{\op}}_\lambda)_{fw}$.
Since $\cA_* \to \pi (\cT^{\Del^{\op}}_\lambda)_{fw}$ is  cofinal, it follows from Lemma \ref{l:cofinal} that the pro-object $\gamma|\cT|: \cA_* \to \Ho(\cS)$ is isomorphic, in $\Pro(\Ho(\cS))$, to the pro-object $\pi (\cT^{\Del^{\op}}_\lambda)_{fw}\xrightarrow{\pi_0} \Ho(\cS)$, which is very close to the \'etale homotopy type of $X$ defined in \cite{AM}.

In order to get Artin and Mazur's construction, we should restrict this pro-object only to hypercoverings, i.e. to those locally fibrant locally contractible simplicial sheaves, which are levelwise
a coproduct of representables in the \'etale site of $X$. However, since the hypercoverings are cofinal among all the locally fibrant locally contractible simplicial sheaves (\cite[Lemma 2.2]{Jar3}), the resulting object in  $\Pro(\Ho(\cS))$ is isomorphic.
\end{proof}

\begin{rem}\
\begin{enumerate}
\item Artin and Mazur ignore set theoretical issues, so they do not need to restrict to the small dense subcategory $\cT^{\Del^{\op}}_\lambda$ instead of $\cT^{\Del^{\op}}$.
\item As we have mentioned, Artin and Mazur's construction can be generalized to any locally connected topos $\cT$. Proposition \ref{p:AM} remains valid also in this more general situation, and the proof is exactly the same.
\item Note that in \cite{AM}, Artin and Mazur work in  some localization of  $\Pro(\Ho(\cS))$ (namely, the $\natural$-localization). This localization
also has a model theoretic counterpart, as a localization of our model structure on pro-simplicial sheaves. This will be described in detail in a future paper.
\end{enumerate}
\end{rem}

\subsection{The relative homotopy type}\label{s:relative}
The notion of a relative \'etale homotopy type was considered in \cite{HaSc} as a useful construction for the study
of rational points. However, similarly to Artin and Mazur's \'etale homotopy type, the relative \'etale homotopy type was not given
within a suitable model category. In this section we lift this construction in a suitable way.

\begin{define}\label{d:relative_big}
Let $\cT = \Sh(\cC)$, ${\cR} = \Sh(\cD)$ be two topoi, and  let
$$f^*:{\cR} \rightleftarrows \cT : f_*$$
be a geometric morphism. By Proposition \ref{p:geo_wrq_big}, we have an induced weak right Quillen functor $f^*:{\cR}^{\Del^{\op}}\to \cT^{\Del^{\op}}$. The functor $f^*$ preserves all small colimits (being a left adjoint), so in particular $f^*$ is accessible. Thus, by Lemma \ref{l:l_adjoint} (1) and Proposition \ref{p:RQFunc_big}, we get a Quillen adjunction
$$L_{f^*}:\Pro(\cT^{\Del^{\op}}) \rightleftarrows \Pro({\cR}^{\Del^{\op}}):\Pro(f^*).$$

We define the \emph{relative topological realization of $\cT$ over ${\cR}$} to be
$$|\cT|_{\cR} := \mathbb{L}L_{f^*}(*_\cT)\in \Ho(\Pro({\cR}^{\Del^{\op}})),$$
where $*_\cT$ is a terminal object of $\cT^{\Del^{\op}}$.

If the above geometric morphism corresponds to the morphism of sites $\cC\to\cD$, we also define the \emph{relative topological realization of $\cC$ over $\cD$} to be
$|\cC|_{\cD}:=|\cT|_{\cR}.$
\end{define}

As in the case ${\cR}=\Set$, if the geometric morphism $f^*:{\cR} \rightleftarrows \cT : f_*$ is \emph{essential}, i.e. if $f^*:{\cR}\to \cT$ has a \emph{left} adjoint $f_!:\cT\to {\cR}$, we have
$$L_{f^*}\cong \Pro(f_!),$$
and this allows us to give a very concrete description of $|\cT|_{\cR}$.
There is also an analogue of Proposition  \ref{p:AM}, if we are only interested in the image of $|\cT|_{\cR}$ in $\Pro(\Ho({\cR}^{\Del^{\op}}))$.

\begin{rem}
The additional generality of working with not necessarily essential geometric morphisms is useful. Perhaps the simplest example is given by the geometric morphism
$$f_*: \Set^H  \to \Set^G  $$
induced by the inclusion $H\subseteq G$, where  $G$ is a pro-finite group and $H$ is a closed subgroup of infinite index. (Consider, for example, $G$ to be an absolute Galois group of $\mathbb{Q}$ and $H$ a decomposition group of some prime $p$.) If $G$ and $H$ were discrete, the desired left adjoint to $f^*$ would be the functor
$$X \mapsto X \times_H G.$$
However, in the pro-finite case $f^*$ need not have a left adjoint. This kind of setting is used by the second author and V. Stojanoska in order to give a Poitou-Tate duality for spectra, and will appear in an upcoming paper.
\end{rem}

It is not hard to check that we get a functor
$$|\bullet|_{\cR}: \cT opoi/{\cR} \to \Ho(\Pro({\cR}^{\Del^{\op}})),$$
where $\cT opoi$ is the category of topoi, and geometric morphisms between them (which is equivalent to the category of small sites, and morphisms of sites between them).

It is easy to verify that for every topos ${\cR}$ we have $|{\cR}|_{\cR} \simeq *_{\cR}$.
Thus, by the functoriality of $|\bullet|_{\cR}$, we have a map
$$h:\cT({\cR})\to [*_{\cR},|\cT|_{\cR}]_{\Pro({\cR}^{\Del^{\op}})},$$
where $\cT({\cR})$ is the set of geometric morphisms $s_*:{\cR} \to \cT$ which are sections of the map $f_*:\cT\to {\cR}$.
The codomain of $h$ above has an obstruction theory and a Bousfield-Kan type spectral sequence, so the map $h$ can be used to study sections of maps of topoi.
For example, if the codomain of $h$ can be shown to be empty, then we know that $\cT({\cR})$ is empty, or in other words that $f$ has no section.
A case of special interest is when $f_*$ is the morphism of \'etale topoi induced by a scheme morphism $X\to spec(K)$. Then, a section of $f$ is just a $K$-rational point of $X$. We elaborate more on this in the next subsection.

\subsection{Rational points}\label{s:rational}
The work presented in this paper originated from the motivation of finding a suitable model structure in which the general machinery of abstract homotopy theory can be used to define and study obstructions to the existence of rational points. Such obstructions were studied without the framework of a model structure by Y. Harpaz and the second author in  \cite{HaSc} and by Ambrus P\'al in \cite{Pal}.
In \cite{HaSc}, Harpaz and the second author defined a notion of a relative \'etale homotopy type of a variety $X/K$ over a field $K$. This construction was then used to study rational $K$-points on $X$, by using some notion of homotopy fixed points.
However, similar to the construction of Artin and Mazur in \cite{AM}, the construction in \cite{HaSc} is homotopical rather then topological, namely, it gives an object in $\Pro(\Ho((\Spec K)_{\acute{e}t}^{\Del^{\op}}))$ rather than $\Pro((\Spec K)_{\acute{e}t}^{\Del^{\op}})$. Furthermore, the above notions are defined  by  ad-hoc constructions, and are not given conceptual definitions in a suitable model category. The construction of the relative topological realization presented here gives an object in $\Pro((\Spec K)_{\acute{e}t}^{\Del^{\op}})$, and allows us to define the above  notions using the language of model categories.

Indeed, let $K$ be a field and let $X/K$ be a $K$-variety. Note that
$$(\Spec K)_{\acute{e}t}^{\Del^{\op}}=\cS^{\Gamma_K}$$
is just the weak fibration category of simplical sets with a continuous $\Gamma_K$ action considered in Example \ref{e:profinite} in the introduction, where $\Gamma_K$ is the absolute Galois group of $K$.
We can define
$$Top_K(X):= |X_{\acute{e}t}|_{(\Spec K)_{\acute{e}t}}\in \Ho\Pro(\cS^{\Gamma_K}),$$
and we get a map
$$h:X(K) \to [*,Top_K(X)]_{\Pro(\cS^{\Gamma_K})}.$$

The map $h$ is closely related to the map $h:X(K) \to X(hK)$, presented in \cite{HaSc} and \cite{Pal}, and can be used to study rational points.
For example, if we define
$$X(hK):= [*,Top_K(X)]_{\Pro(\cS^{\Gamma_K})},$$
then the emptiness of $X(hK)$ is an obstruction to the existence of a $K$-rational point on $X$.
By using the Postnikov filtration on $Top_K(X)$ one can obtain a series of obstructions
$$o_n \in H^{n+1}_{cont-Gal}(K,\pi_{n}(Top_K(X)))$$
to the non-emptiness of $X(hK)$. The results are ``higher Grothendieck obstructions" that generalize the Grothendick section  obstruction which employs the \'etale fundamental group. Furthermore, having a ``topological" object and a model structure allows one to use the general machinery of model categories in order to give simpler and more conceptual proofs to the results in \cite{HaSc}. This also enables one to generalize the homotopical obstruction theory of \cite{HaSc}, from fields to arbitrary base schemas. This approach will be discussed in future papers.

As a first illustration we formulate the main result of \cite{HaSc} using the language and notation presented here. Suppose that $K$ is a number field and let $\mathbb{A}_K$ denote its adele ring. Then an adelic point in $X$ is just a map $\Spec\mathbb{A}_K \to X$ of schemes over $\Spec K$. The set of adelic points is denoted $X(\mathbb{A}_K)$. Applying the functor $|\bullet|_{(\Spec K)_{\acute{e}t}}$ we get a map
$$X(\mathbb{A}_K)\to [|(\Spec \mathbb{A}_K)_{\acute{e}t}|_{(\Spec K)_{\acute{e}t}},Top_K(X)]_{\Pro(\cS^{\Gamma_K})}=:X(h\mathbb{A}_K).$$
We clearly have a commutative diagram of sets
$$\xymatrix{X(K)\ar[r]\ar[d] & X(hK)\ar[d] \\
X(\mathbb{A}_K)\ar[r] & X(h\mathbb{A}_K).}$$
We can now formulate the main result in \cite{HaSc}.

\begin{thm}[Harpaz-Schlank]
If $X$ is a smooth geometrically connected variety over $K$ then the image of the natural map
$$X(hK)\times_{X(h\mathbb{A}_K)}X(\mathbb{A}_K)\to X(\mathbb{A}_K)$$
is exactly the \'etale Brauer obstruction to the existence of a $K$-rational point in $X$ (defined in \cite{Sko}).
\end{thm}

\subsection{Embedding problems}
The approach described above to obstructing sections can be used in other contexts as well. For example, let $K$ be any field and let $\Gamma_K$ be the absolute Galois group of $K$. Consider a diagram of pro-finite groups
$$\xymatrix{
&      &   & \Gamma_K\ar@{->>}[d]\ar^l@{-->>}[dl] & \\
1\ar[r] & G\ar[r] & L \ar@{->>}[r] & M\ar[r] & 1.
}$$
For finite $L$, in the context of Galois theory, a surjective lift $l$ in the diagram above is called a ``solution to an embedding problem".
We would like to give an overview of how one can use the relative topological realization of topoi to obstruct solutions to embedding problems.
Indeed, let $$\Gamma := \Gamma_K \times_M L.$$
Then we obtain a short exact sequence of pro-finite groups
$$\xymatrix{
1\ar[r] & G\ar[r] & \Gamma \ar@{->>}[r]^{f} & \Gamma_K\ar[r] & 1.
}$$
The map $f$ has a section iff there exists a lift $l$  (not necessarily  surjective) in the diagram above.
Now we have an induced geometric morphism of topoi of sets with a continuous action
$$f_{*}: \Set^{\Gamma} \to \Set^{\Gamma_K}.$$
If we denote $\cT:=\Set^{\Gamma}$ and ${\cR}:=\Set^{\Gamma_K}$, the existence of  a lift $l$ gives a section to $f_{*}$ and thus an element in $[*, |\cT|_{\cR}]_{\Pro(\cS^{\Gamma_K})}$ (called a $\Gamma_K$-homotopy fixed point in $|\cT|_{\cR}$).
To compute  $|\cT|_{\cR}$, note that $*^{cof} \in \Pro(\cS^{\Gamma})$ is a pro-diagram of Kan-contractible simplical  sets with  continuous $\Gamma$-action.  Since $L$ is a finite quotient of $\Gamma$ we can restrict to a cofinal diagram and get that all spaces in the diagram have a map to $E(L)$. Thus they all have a free $G$-action.  Now, it is easy to see that $f_{!}$ (the left adjoint to $f_{*}$) exists in this case, and it is just the functor of taking $G$-orbits.
Thus, all $\Gamma_K$ spaces in the diagram of $|\cT|_{\cR}$ are homotopic to $B(G)$ and we get that $|\cT|_{\cR}$ is equivalent to the simple object
$E(L /G)$, which is a form of $B(G)$ with  a $\Gamma_K$-action.
Let
$$X \mapsto \mathbb{Z}^1 X:\cS^{\Gamma_K}\to \cS^{\Gamma_K}$$
be the functor obtained by levelwise taking a set $A$ to the set of formal sums of elements of $A$ of total degree 1.
It is a classical fact that $\pi_n(\mathbb{Z}^1 X) = \tilde{H}_n(X)$ and that the natural map $X \to \mathbb{Z}^1 X$ realizes the Hurewtiz map.
A lift $l$ gives rise to a section for $f_*$, which in turn gives rise to a homotopy fixed point in $|\cT|_{\cR}$ and thus also in
$\mathbb{Z}^1|\cT|_{\cR}$. By using the Postnikov filtration we get a  Bousfield-Kan type obstruction theory. That is,
there exists a sequence of obstructions $o_1,o_2,\dots$ to the existence of a lift  $l$ such that
$$o_i \in H^{i+1}(K,H_i(G,\mathbb{Z})).$$

The second author used this obstruction theory to show that some non-Abelian groups cannot be  the Galois group of an unramified extension of certain number fields (to appear in a future paper).


Department of Mathematics, University of Muenster, Nordrhein-Westfalen, Germany.
\emph{E-mail address}:
\texttt{ilanbarnea770@gmail.com}

Department of Mathematics, Massachusetts Institute of Technology, Massachusetts, USA.
\emph{E-mail address}:
\texttt{schlank@math.mit.edu}

\end{document}